\theoremstyle{plain}
\numberwithin{equation}{section}
\newtheorem{thm}{Theorem}[section]
\newtheorem{theorem}[thm]{Theorem}
\newtheorem{lemma}[thm]{Lemma}
\newtheorem{corollary}[thm]{Corollary}
\newtheorem{definition}[thm]{Definition}
\newtheorem{proposition}[thm]{Proposition}
\newtheorem{remark}[thm]{Remark}
\newtheorem{assumption}[thm]{Assumption}
\newtheorem{problem}[thm]{Problem}
\def\de{\delta}
\def\ep{\epsilon}
\def\ga{\gamma}
\def\Ga{\Gamma}
\def\la{\lambda}
\def\om{\omega}
\def\Om{\Omega}
\def\pa{\partial}
\def\vphi{\varphi}
\def\N{\mathbb{N}}
\def\P{\mathbb{P}}
\def\R{\mathbb{R}}
\def\Triag{\mathscr{T}}
\def\fa{\forall}
\def\grad{\nabla}
\def\lang{\langle}
\def\rang{\rangle}
\def\Alg{\mathcal{A}}
\def\Lin{\mathcal{L}}
\DeclareMathOperator{\dist}{dist}
\DeclareMathOperator{\dive}{div}
\def\loc{\text{loc}}
\DeclareMathOperator{\supp}{supp}
\DeclareMathOperator{\Sym}{Sym}
\def\til{\widetilde}
\newcommand\beq{\begin{equation}}
\newcommand\eeq{\end{equation}}
\newcommand\bea{\begin{eqnarray}}
\newcommand\eea{\end{eqnarray}}
\newcommand\bi{\begin{itemize}}
\newcommand\ei{\end{itemize}}
\newcommand\ben{\begin{enumerate}}
\newcommand\een{\end{enumerate}}
\begin{document}

\title[Optimal control of peridynamics]{On the Optimal Control of a Linear Peridynamics Model}

\author{Tadele Mengesha \and Abner J. Salgado \and Joshua M. Siktar}
\address{Department of Mathematics, University of Tennessee, Knoxville TN 37996, USA}

\email[T. Mengesha]{mengesha@utk.edu}
\urladdr[T. Mengesha]{https://sites.google.com/utk.edu/tadelemengesha/}

\email[A.J. Salgado]{asalgad1@utk.edu}
\urladdr[A.J. Salgado]{https://sites.google.com/utk.edu/abnersg/}

\email[J.M. Siktar]{jsiktar@vols.utk.edu}
\urladdr[J.M. Siktar]{https://joshuasiktarcomputationalarchive.weebly.com/}

\keywords{Peridynamics; optimal control; asymptotic compatibility; integral equations; non-local systems; bond-based model.}

\subjclass[2010]{45F15,   
49M41,                    
49M25,                    
49J21,                    
65R20,                    
74P10.                    
}

\date{\today}

\begin{abstract}
    We study a non-local optimal control problem involving a linear, bond-based peridynamics model. In addition to existence and uniqueness of solutions to our problem, we investigate their behavior as the horizon parameter $\de$, which controls the degree of nonlocality, approaches zero. We then study a finite element-based discretization of this problem, its convergence, and the so-called asymptotic compatibility as the discretization parameter $h$ and the horizon parameter $\de$ tend to zero simultaneously.
\end{abstract}

\maketitle


\section{Introduction}\label{introduction}

This paper focuses on an optimal control problem with a system of constraint equations derived from peridynamics (PD), which is a contemporary non-local model in solid mechanics,  \cite{silling2000reformulation, silling2007peridynamic}. PD models do not assume the differentiability (even in the weak sense) of 
pertinent forces acting on a body nor on the resulting displacement vector fields, unlike their local counterparts in continuum mechanics. This feature of PD models makes them attractive to  analyze certain physical phenomena with inherent discontinuities, such as the formation of cracks
 in solids \cite{silling2005peridynamic, silling2010crack, silling2014peridynamic}. In this work, we will focus on the bond-based PD model, where particles in a solid are assumed to expend long distance forces on other particles within a certain radius. With this in mind, we will consider the problem of linearly deforming  a [possibly heterogeneous] elastic solid occupying a domain $\Om \subset \R^n$ to achieve a desired deformation state by applying a certain external force. The deformation field given by $v(x) := x + u(x)$, where $u$ is the displacement, and the external force $g$ are related via the linearized bond-based PD model \cite{du2013analysis, mengesha2014bond,silling2010linearized}
 given by 
\[
    \Lin_{\de} u(x):=\int_{\mathbb{R}^{n}} \mathfrak{f}_{\de} (s[u](x,y), y, x) dy= g(x), \quad x\in \Om, 
\]
where  the vector-valued pairwise force density function $\mathfrak{f}_\de$ along the bond joining material points $x$ and $y$, and the scalar linearized strain field $s[u]$ associated with the displacement $u$ are given by \[
\begin{split}
s[u](x,y)  &= \frac{u(x)-u(y)}{|x - y|}\cdot \frac{x - y}{|x - y|};\\
 \mathfrak{f}_\de (s[u](x,y), y, x) &= H(x, y)k_\de(|x-y|) s[u](x,y) \frac{y - x}{|x - y|}.
\end{split}
\]
In the above, $H(x, y)= \frac{1}{2}(h(x)+h(y))$ serves as material coefficient for some bounded function $h$. The function $k_\de(|x-y|)$ is the interaction kernel that is radial  and  describes the force strength between material points. The parameter $\de > 0$, in the definition of $\Lin_{\de}$, is called the {\em horizon} and measures the degree of non-locality, i.e., the radius within which the interaction forces are considered. We assume that $k_\de(|x-y|) = 0$ if $|x-y|\geq\de$; additional assumptions on the family $\{k_\de\}_{\de>0}$ will be given later.

To quantify the desirability of a displacement state $ u$ subject to the the external force $g$, which will be our control, we introduce an objective functional $I(u, g)$.  This functional will be taken to be  a sum of two parts: one measures, say, the mismatch of the displacement state $u$ and the desired  displacement field, say $u_{des}$, and the other penalizes the control $g$ and serves as a regularizer. 
We will delay the exact form  of the objective functional until the next section, but the optimal control problem of interest of the paper can now be stated as  
\begin{equation}\label{GenericOptCond}
    \begin{dcases}
    \min \{ I(u, g) \ | \ (u,g) \in X_{ad} \times Z_{ad} \}, \\
    \Lin_{\de} u \ = \ g \quad \text{in $\Om$},
    \end{dcases}
\end{equation}
where the  
admissible set $X_{ad} \times Z_{ad}$ will be specified in the next section. As described above, the state equation, codified by the operator $\Lin_{\de}$, will be a strongly coupled linear system of integral equations.  The definition of $\Lin_{\de}u$ requires the knowledge of the state $u$ outside of the domain $\Om$, up to a boundary layer of thickness $\de$.  Thus, we close the state equation in \eqref{GenericOptCond}, by assigning $u$ to be a fixed displacement field $u_0$ in the boundary layer which we call the {\em nonlocal Dirichlet boundary condition.}

In this work, we prove the well-posedness of \eqref{GenericOptCond} for a more general class of objective functionals and a broader class of interaction kernels $k_\de$ that include fractional-type kernels. We also study the behavior of the optimal pair $(\overline{u}, \overline{g})$ as a function of the horizon $\de$. In fact, we demonstrate that in the vanishing horizon limit $\de \to 0^+$ the integral equation-based optimal control problem \eqref{GenericOptCond} converges, in a certain sense,  to a differential-equation-based optimal control problem.  Well-posedness as well as vanishing nonlocality limit for the state equations have been studied in  \cite{mengesha2014bond, scott2018fractional, zhou2010mathematical}.   
In addition, we consider the numerical approximation of solutions to \eqref{GenericOptCond} via the first-order optimality conditions. The discrete problem will involve two parameters: the discretization parameter $h$ and the horizon $\de$. We will  show that we have convergence, not only when $h$ tends to zero, but that we also have \emph{asymptotic compatibility} (see \cite{tian2014asymptotically}), in the sense that the limit is unique regardless of the path we use to let $h \to 0^+$ and $\de \to 0^+$.

 While literature on optimal control problems is immense, we cite some works that are related to the current study. The optimal control problem when the state equation is  a scalar fractional or non-local equation is studied in  
 \cite{antil2017note, antil2020optimal, antil2021optimal, burkovska2021optimization, d2019priori, munoz2022local}. 
 The papers \cite{bonito2018numerical, borthagaray2019weighted, acosta2017fractional, d2019priori, ern2017finite} study the finite element analysis of optimal control  problems of fractional or nonlocal equations. 
For our approach of  using the first-order optimality conditions in order to approximate the continuous problem with the corresponding discrete problems, we refer the reader to \cite{antil2021optimal, casas2015second, casas2012optimality, d2019priori, otarola2019maximum} for more on this subject matter. To the best of our knowledge the optimal control problem for a strongly coupled system of nonlocal equations of peridynamic-type has not been studied in the literature; the current work makes a contribution in that direction.  We also mention that while the present work focuses on the basic linear bond-based peridynamic model, similar analysis can be done on the more general state-based peridynamics \cite{silling2007peridynamic} as well as other nonlinear models, like those studied in \cite{mengesha2015variational}. This and other related issues will be addressed in future work. 

We now outline the contents of the rest of the paper. First, Section \ref{sec:Notation} states the problems to be studied, with all notation made precise. Section \ref{properties} highlights some structural  properties of the function space of interest such as 
compact embedding. The framework from which the well-posedness of our local and non-local optimal control problems can be deduced is carried over in Section \ref{oc}. The remaining sections study the relationship between our problems as $\de$ and $h$ change: Section \ref{gaconv} considers $\Ga$-convergence results as $\de \rightarrow 0^+$; Section \ref{FEMAnalysis} features finite element analyses for the local and non-local problems as $h \rightarrow 0^+$; and Section \ref{compatibility} proves the asymptotic compatibility of limits as $\de$ and $h$ both tend to $0$. 


\section{Problem formulation}\label{sec:Notation}


\subsection{Notation and assumptions}
Let us begin by introducing some notation; first, by $A \lesssim B$ we mean that there is a nonessential constant $c$, such that $A \leq cB$. In addition, $A \sim B$ means $A \lesssim B \lesssim A$. We assume throughout the paper that $\Om \subset \R^n$ is an open, bounded domain with a Lipschitz boundary, and denote $\Om_{\de} := \Om \cup \{x \in \R^n \ | \ \dist(x, \Om) < \de\}$, where $\de > 0$ is the horizon parameter. By volumetric boundary we mean the boundary layer $\Om_{\de}\setminus \Om$  surrounding $\Om$.  For any $r > 0$ and $x_0 \in \mathbb{R}^{n}$, we denote a ball centered at $x_0$ with radius $r$ by $B_r(x_0)$. Next we provide assumptions on our kernels which are adopted from \cite{bourgain2001another, munoz2022local}. 
 
 \begin{assumption}[Kernel assumptions]\label{kernelAssump}
We assume that $\{k_{\de}\}_{\de > 0}$ is a family of radial, non-negative, kernels in $L^1(\R^n)$ supported in $B_\de(0)$  such that 
\begin{equation}\label{kernelNormalization}
\begin{split}
    \int_{\R^n}k_{\de}(|\xi|)d\xi \ &= \ 1,\\
      \lim_{\de \rightarrow 0^+}\int_{\R^n \setminus B_\epsilon(0)}k_{\de}(|\xi|)d\xi \ &=0,\quad \text{for all  $\epsilon >0$.}
    \end{split}
\end{equation}
The above two conditions say that the family of $L^1$ functions $\{k_{\de}\}_{\de > 0}$ converges to the Dirac measure $\de_0$ in the sense of measures.  
We also assume that for each $\de>0,$ $k_{\de}(r)r^{-2}$ is non-increasing in $r$. 
 \end{assumption}
 
Given any $L^1(\mathbb{R}^{n})$-function $k(|\xi|)$ supported on the unit ball $B_1(0)$, the family $k_{\de}(|\xi|) = \de ^{-n}k\bigg({|\xi|\over \de}\bigg)$ satisfies \eqref{kernelNormalization}; for other  nontrivial kernels satisfying the above conditions see \cite{bourgain2001another}. 
To properly define our function spaces and norms, we introduce some additional notation. First, given $u:\Om_{\de} \to \R^n$ measurable, we let $Du$ represent the \textbf{projected difference} defined as 
\[Du(x, y) := (u(x) - u(y)) \cdot \frac{(x - y)}{|x - y|}.\]
This quantity is the trace of $(u(x) - u(y)) \otimes \frac{x - y}{|x - y|}$. Notice then that the linearized strain field $s[u](x, y)$ is given by $s[u](x, y)=\frac{Du(x, y)}{|x - y|}$. Using these notations, the  vector-valued nonlocal operator $\Lin_{\de}$ is given by \[
\Lin_{\de} u(x) = \int_{\Om_{\de}}H(x,y)k_\de(|x-y|) \frac{Du(x, y)}{|x - y|}\frac{y - x}{|x - y|}dy
\]
whenever it makes sense. We notice that for $u, v \in C^{\infty}_c(\Om;\R^n)$, see \cite[Proposition A.5]{foghem2022general},   
\[
\begin{split}
\int_{\Om} \Lin_{\de} u(x) \cdot v(x)dx &=  {1\over 2} \iint_{\mathcal{D}_{\de}}H(x, y)k_{\de}(x - y)\frac{Du(x, y)}{|x - y|}\frac{Dv(x, y)}{|x - y|}dxdy\\
&=:B_\de(u, v),
\end{split}
\]  
where $\mathcal{D}_{\de} = (\Om \times \Om_{\de}) \cup (\Om_{\de} \times \Om).$  The latter defines a bi-linear form and we understand the strongly coupled system of nonlocal equations for the state $u$, $ \Lin_{\de} u = g$,  in the weak sense as the Euler-Lagrange equation for the corresponding quadratic potential energy 
\begin{equation}\label{Energy1st}
 {1\over 2}\iint_{\mathcal{D}_{\de} } H(x, y) k_{\de}(|x-y|) \left|\frac{Du(x, y)}{|x - y|}\right|^{2}  dxdy  -\int_{\Om} {g}(x)\cdot {u}(x)dx
\end{equation}
defined on an appropriate space of functions with a displacement field on the nonlocal boundary. 
Recall that $H(x,y) = {1\over 2}(h(x) + h(y))$ and  that there are positive  constants $h_{min}$ and $h_{max}$
 such  that $h_{min} \leq h(x) \leq h_{max}$ for all $x\in \Om_{\de}$. 
 With  this assumption on $H$ and for $g\in L^2(\Om;\mathbb{R}^{n})$, the energy  in \eqref{Energy1st} is finite for $u: \Om_{\de} \to \R^n$  measurable such that 
 \[
  u|_\Om\in L^2(\Om;\R^n),\quad \text{and }\, \iint_{\mathcal{D}_{\de} } k_{\de}(|x-y|) \left|{Du(x, y) \over  |y-x|}\right|^{2}  dxdy  <  \infty.
 \]
 We denote this space of functions by $X(\Om_{\de}; \R^n)$; i.   e. 
 \[
 X(\Om_{\de}; \R^n) = \left\{  u|_\Om\in L^2(\Om; \R^n) \  \middle| \ \iint_{\mathcal{D}_{\de} } k_{\de}(|x-y|) \left|{Du(x, y) \over  |y-x|}\right|^{2}  dxdy  <  \infty\right\}.
 \]
 We also introduce the corresponding space of functions having a zero nonlocal boundary condition as 
 \[
  X_0(\Om_{\de}; \R^n) = \left\{  u\in X(\Om_{\de}; \R^n)\  \middle| \ u=0 \,\, \text{on $\Om_{\de}  \setminus \Om$}\right\}.
\]
It is not difficult to show that the spaces $ X(\Om_{\de}; \R^n)$ and $ X_0(\Om_{\de}; \R^n)$ are normed spaces with the norm
\begin{equation}\label{X-norm}
\|u\|_{X(\Om_{\de}; \R^n)} \ := \left(  \|u\|_{L^2(\Om; \R^n)}^2 + [u]_{X(\Om_{\de}; \R^n)}^{2}\right)^{{1\over 2}}
\end{equation}
where $[u]^2_{X(\Om_{\de}; \R^n)} = \iint_{\mathcal{D}_{\de} } k_{\de}(|x-y|) \left|{Du(x, y) \over  |y-x|}\right|^{2}  dxdy.$ 
Notice that the form $B_{\de} : X(\Om_{\de}; \R^n) \times X(\Om_{\de}; \R^n) \to \R$ is a well defined continuous bi-linear form. 

One objective of this work is to make connections between the non-local optimal control problem and a local control problem as $\de \rightarrow 0^+$. As we will show, the corresponding bi-linear form of interest is 
\begin{equation}\label{Bh0}
    B_0(u, v) \ := \ \frac{1}{n(n + 2)}\int_{\Om}h(x)(2\lang\Sym(\grad u(x)), \Sym(\grad v(x))\rang_F + \dive(u(x))\dive(v(x)))dx, 
\end{equation}
where $\lang \cdot, \cdot\rang_F$ is the Fröbenius inner product on matrices:
\[
  \lang A, B\rang_F \ := \ \sum^{n}_{i = 1}\sum^{n}_{j = 1}a_{i, j}b_{i, j}, \qquad \forall A,B \in \R^{n \times n}.
\]
It turns out that the appropriate energy space for the resulting local problem is the classical space 
\begin{equation}\label{H^10Def}
    H^1_0(\Om; \R^n) \ := \ \{u \in L^2(\Om; \R^n) \ | \ \nabla u \in L^2(\Om; \R^{n \times n}), \ u \ = \ 0 \ \text{on} \ \pa \Om\},
    \end{equation}
with the natural norm 
\begin{equation}\label{H^1-norm}
    \|u\|_{H^1(\Om; \R^n)} \ := \ \left( \|u\|_{L^2(\Om; \R^n)}^2 + [u]_{H^1(\Om; \R^n)}^2 \right)^{\frac{1}{2}},
\end{equation}
and corresponding semi-norm
\begin{equation}\label{H^1-seminorm}
    [u]_{H^1(\Om; \R^n)} \ := \ \|\grad u\|_{L^2(\Om; \R^{n \times n})}.
\end{equation}

Now, to state the optimal control problem of interest precisely, we define the pertinent objective functional. As we mentioned earlier the functional will be taken to be the sum of two terms. The first is a quality functional $Q:X_{ad}\subset X(\Om_{\de}; \R^n) \to [0, \infty)$, that assigns a certain value $Q(u)$ to each admissible displacement field depending on a certain criteria. For example, given a desired displacement state $u_{des}$, we may want a state $u$ that matches $u_{des}$ as closely as possible. In this case we wish to choose $u$ that keeps the mismatch between $u$ and $u_{des}$ to the minimum. The mismatch may be defined as a weighted squared error $\int_{\Om} \ga(x)|u(x) - u_{des}(x)|^{2} dx$ for some $0 \leq \ga\in L^\infty(\Om)$. Notice that by choosing $\ga$ appropriately, we may seek to match the desired state only on a portion of the domain.   
More generally, we would want the quality functional to have the form 
\[
Q(u) = \int_{\Om}F(x, u(x))dx,
\]
where the integrand $F: \Om \times \R^n \to \R$ possesses the following properties:
\begin{enumerate}
\item For all $v \in \R^n$ the mapping $x \mapsto F(x,v)$ is measurable;
\item For all $x \in \Om$ the mapping $v \mapsto F(x,v)$ is continuous and convex;
\item There exist constant $c_1>0$ and $l \in L^1(\Om) $ for which
\begin{equation}\label{Gqcoercive}
    |F(x, v)| \ \leq \ c_1|v|^2 + l(x)
\end{equation}
for all $x \in \Om$ and all $v \in \R^n$.
\end{enumerate}

The second part of the objective functional is a cost functional associated with the external force. We seek a forcing term $g$ whose associated displacement has the desired quality while keeping the cost as minimal as possible. Typically, we take this cost functional, $C(g)$, to be a weighted $L^2$-norm of $g$ of the form 
\[
C(g) = \int_{\Om} \Ga(x) |g(x)|^{2} dx, 
\]
for some $0 < \Ga \in L^1(\Om)$.
 To that end, we take the admissible control space to be $Z_{\text{ad}}$, a nonempty, closed, convex, and bounded subset of $L^2(\Om; \R^n)$, and it takes the form
\begin{equation}\label{linearAdSet}
Z_{\text{ad}} = \{z \in L^2(\Om;\R^n)  \ | \ a(x) \preceq z \preceq b(x)\}
\end{equation}
for some $a, b\in L^{\infty}(\Om; \mathbb{R}^n)$, where $a\preceq b$ means $[a]_i \leq  [b]_i$ for all $i \in \{1, 2, \dots, n\}$. Without loss of generality, we shall assume that $0 \in Z_{\text{ad}}$.

In summary, the objective function we will be working with is of the form
\begin{equation}\label{dmethod2CostIntro}
    I(u, g) \ := \ \int_{\Om}F(x, u(x))dx + \int_{\Om} \Ga(x) |g(x)|^{2} dx
\end{equation}
under the above assumptions on $F$ and $\Ga$. 


\subsection{Problem set up}
Now that we have specified the different function spaces as well as bi-linear forms of interest, we are now ready to  precisely pose the optimal control problems.  
The first one is the optimal control problem of the coupled system of nonlocal equations.  Given a boundary data $u_0\in X(\Om_{\de}; \R^n)$, the problem is
finding a pair $(\overline{u_{\de}}, \overline{g_{\de}})\in X(\Om_{\de}; \R^n) \times Z_{\text{ad}}$ such that 
\begin{equation}\label{stateMin-Nonzero}
    I(\overline{u_{\de}}, \overline{g_{\de}}) = \min I(u_{\de}, g_{\de}),
\end{equation} 
where the minimization is over pairs $(u_{\de}, g_{\de}) \in X(\Om_{\de}; \R^n) \times Z_{\text{ad}}$ that satisfy 
\begin{equation}\label{stateSystem-Nonzero}
u_{\de} - u_0 \in X_0(\Om_{\de}; \R^n),\,\,\text{and} \,\, B_{\de}(u_{\de}, v) \ = \ \lang g_{\de}, v\rang, \, \text{for all }\, v \in X_0(\Om_{\de}; \R^n).
\end{equation}
Here we use the notation $\langle\cdot,\cdot\rangle$ for the $L^2$-inner product. 
We remark that without loss of generality we may assume that $u_0 = 0$ in the above formulation. Indeed, if $u_{\de}$ solves  \eqref{stateSystem-Nonzero} and we set $e_{\de} := u_{\de} -u_0$, then  $e_{\de}\in X_0(\Om_{\de}; \R^n)$ and  
\begin{equation}\label{zeroBdryCond}
B_{\de}(e_{\de}, v) \ = \ \lang g_{\de}, v\rang + B_\de(u_0, v), \,\,\text{for all $v\in X_0(\Om_{\de}; \R^n)$}.
\end{equation}
After noting that the map $v\to B_\de(u_0, v)$ is a bounded linear functional on $X_0(\Om_{\de}; \R^n)$, the right hand side of \eqref{zeroBdryCond} can  be viewed to define a duality pairing between $X_0(\Om_{\de}; \R^n)$ and its dual.  The objective functional as a function of $(e_\de, g_\de)$ with still have the form as \eqref{dmethod2CostIntro} with an integrand $\tilde{F}(x, e) = F(x, e + u_0(x))$. Notice that $\tilde{F}$ the exact same properties as $F$.  

With this simplification at hand, we summarize the problem as follows.  
\begin{problem}[Non-local continuous problem]\label{nlctsProb}
Find a pair $(\overline{u_{\de}}, \overline{g_{\de}})\in X_0(\Om_{\de}; \R^n) \times Z_{\text{ad}}$ such that 
\begin{equation}\label{stateMin}
    I(\overline{u_{\de}}, \overline{g_{\de}}) = \min I(u_{\de}, g_{\de}) 
\end{equation} 
where the minimization is over pairs $(u_{\de}, g_{\de}) \in X_0(\Om_{\de}; \R^n) \times Z_{\text{ad}}$ that satisfy 
\begin{equation}\label{stateSystem}
 B_{\de}(u_{\de}, v) \ = \ \lang g_{\de}, v\rang, \, \text{for all }\, v \in X_0(\Om_{\de}; \R^n).
\end{equation}
\end{problem}
The effective admissible class of pairs for this nonlocal optimal control problem is 
\begin{equation}\label{nonlocalAdmiClass}
\Alg^{\de}=\{  (w, f) \in X_0(\Om_{\de}; \R^n) \times Z_{\text{ad}} | B_{\de}(w, v) \ = \ \lang f, v\rang, \, \text{for all }\, v \in X_0(\Om_{\de}; \R^n\}.
\end{equation}
We are also interested in the behavior of the above nonlocal optimal control problem in the limit of vanishing nonlocality as quantified by $\de$ which turns out to be a local problem.

\begin{problem}[Local continuous problem]\label{lCtsProb}
Find a pair $(\overline{u}, \overline{g}) \in H^1_0(\Om; \R^n) \times Z_{\text{ad}}$ such that
\begin{equation}\label{locObjFunc}
    I(\overline{u}, \overline{g}) \ = \ \min I(u, g),
\end{equation}
where the minimization is over pairs $(u, g) \in H^1_0(\Om; \R^n) \times Z_{\text{ad}}$ that satisfy
\begin{equation}\label{locProb}
    B_0(u, v) \ = \ \lang g, v\rang, \ \quad \fa v \in H^1_0(\Om; \R^n).
\end{equation}
\end{problem}
As before, the effective admissible class of pairs for the control problem is 
\begin{equation}\label{admissLoc}
    \mathcal{A}^{\loc} \ := \ \{(w, f) \in H^1_0(\Om; \R^n) \times Z_{\text{ad}} |   B_0(w, v) \ = \ \lang f, v\rang \quad \fa \ v \in H^1_0(\Om; \R^n) \}.
\end{equation}
We now introduce notation for our finite element scheme and discretized problems. The family of meshes $\{\Triag_h\}_{h > 0}$ discretizing $\Om_{\de}$ is assumed to be quasi-uniform and of size $h$. Let $X_h \subset X_0(\Om_{\de}; \R^n)$ denote the space of continuous, piecewise linear, functions subject to the mesh with zero non-local boundary data, i.e.,
\begin{equation}\label{XhDef}
    X_h \ := \ \{w_h \in C^0(\overline{\Om_{\de}}; \R^n) \ | \ w_h|_T \in \mathcal{P}_1(T; \R^n) \ \fa T \in \Triag_h, w_h \ = \ 0 \ \text{on} \ \Om_{\de} \setminus \Om\},
\end{equation}
and $X_{\de, h}$ will denote this same function space, albeit with a different norm. For the local discrete problem, this space will denoted as $X_h$ and equipped with the norm \eqref{H^1-norm}, and for the non-local discrete problem, this space will instead be denoted as $X_{\de, h}$ and equipped with the norm \eqref{X-norm}. Similarly, let $Z_h$ denote the piecewise constant functions with respect to our mesh, i.e.,
\begin{equation}\label{ZhDef}
    Z_h \ := \ \{z_h \in L^\infty(\Om_{\de}; \R^N) \ | \ z_h|_T \in \mathcal{P}_0(T; \R^n) \ \fa T \in \Triag_h\}.
\end{equation}
Here and henceforth, we denote the space of vector-valued polynomials of degree $m$ as
\begin{equation}\label{P^mSpace}
    \mathcal{P}_m(T; \R^n) \ := \ \left\{ \sum_{ \alpha \in \mathbb{N}_0^n\ :\  \sum_{i=1}^n \alpha_i \leq m } v_\alpha x_1^{\alpha_1} \cdots x_n^{\alpha_n}  \ \middle| \ v_\alpha \in \R^n, \ x = (x_1, \ldots, x_n)^\intercal \in T \right\}.
\end{equation}
We will use $X_{\de, h}$ and $X_h$, as appropriate, to discretize the state space, and $Z_h$ to discretize the control space. Now we may state our non-local and local discrete problems.
\begin{problem}[Non-local discrete problem]\label{nlDiscProb}
Find a pair $(\overline{u_{\de, h}}, \overline{g_{\de, h}})\in X_{\de, h} \times Z_h$ such that 
\begin{equation}\label{stateMinNLD}
    I(\overline{u_{\de, h}}, \overline{g_{\de, h}}) = \min I(u_{\de, h}, \ g_{\de, h})
\end{equation} 
where the minimization is over pairs $(u_{\de, h}, g_{\de, h}) \in X_{\de, h} \times Z_h$ that satisfy
\begin{equation}\label{stateSystemNLD}
B_{\de}(u_{\de, h}, v_{\de, h}) \ = \ \lang g_{\de, h}, v_{\de, h}\rang \ \quad \fa v_{\de, h} \in X_{\de, h}.
\end{equation}
\end{problem}
The effective admissible class of pairs for the above nonlocal discrete problem is 
\begin{equation}\label{nonlocalAdmiClassNLD}
    \Alg^{\de}_h \ := \ \{(w_{\de,h}, f_{\de,h}) \in X_{\de, h} \times Z_h, w_{\de, h} \ | \ B_{\de}(w_{\de, h}, v_{\de, h}) \ = \ \lang f_{\de, h}, v_{\de, h}\rang \ \quad \fa v_{\de, h} \in X_{\de, h}\,\}    
\end{equation}
Finally we state the local discrete optimal control problem. 
\begin{problem}[Local discrete problem]\label{lDiscProb}
Find a pair $(\overline{u_h}, \overline{g_h})\in X_h \times Z_h$ such that 
\begin{equation}\label{stateMinLocDiscrete}
    I(\overline{u_h}, \overline{g_h}) \ = \ \min I(u_h, g_h),
\end{equation} 
where the minimization is over pairs $(u_h, g_h) \in X_h \times Z_h$ that satisfy
\begin{equation}\label{stateSystemLocDiscrete}
B_0(u_h, v_h) \ = \ \lang g_h, v_h\rang \ \quad \fa v_h \in X_h.
\end{equation}

\end{problem}
The effective admissible class of pairs for this local problem is 
\begin{equation}\label{nonlocalAdmiClassLocDiscrete}
    \Alg^{\text{loc}}_h \ := \ \{(w_h, f_h) \in X_h \times Z_h \ | \ B_0(w_h, v_h) \ = \ \lang f_h, v_h\rang, \ \quad \fa v_h \in X_h\}.
\end{equation}

Note that in each problem, the state equation governs the relationship between the force [control] and the displacement [state] that must take place in any admissible solution.

\section{Properties of function spaces}\label{properties}
In this section we state and prove some structural properties of the function spaces 
$X(\Om_{\de}; \R^n)$ and $X_0(\Om_{\de}; \R^n)$ defined in the previous section. We begin noting that the function spaces  
are separable Hilbert spaces with the following inner product defined for $u, v\in X(\Om_{\de}; \R^n)$:
\[
[[u, v]]:= \langle u, v\rangle +[u,v]_{X},
\]
where $ [u,v]_{X}=\iint_{\mathcal{D}_{\de}} k_{\de}(|x-y|) {Du(x,y)\over |x-y|}{Dv(x,y)\over |x-y|}dydx. $ It is obvious that $ [u,u]_{X} = [u]_{X}^{2}$, and that, under the working assumption on $H$, we have that ${h_{min}\over 2} [u]_{X}^{2} \leq B_{\de}(u,u) \leq {h_{max}\over 2} [u]^{2}_{X}.$ 
Moreover, for $u \in X_0(\Om_{\de}; \R^n)$, we have 
\[ 
[u]_{X(\Om_{\de}; \R^n)}^{2} = \iint_{\Om_{\de}\times \Om_{\de}} k_{\de}(|x-y|) \left|{Du(x,y)\over |x-y|}\right|^{2}
dydx\] 
which we also use as a seminorm. It then follows from \cite{mengesha2020fractional} that if $\til{u}$ is the zero extension of $u$ to $\R^n$ then there exists a constant $C = C(\de, p) > 0$ such that, for any open set $B$ containing $\Om_{\de}$, we have
\begin{equation}\label{ourExtensionIneq}
    [\til{u}]_{X(B; \R^n)} \ \leq \ C\|u\|_{X(\Om_{\de}; \R^n)}.
\end{equation}
In particular, the constant is independent of $B$, and we may select $B := \R^n$, where we define
\[
  X(\mathbb{R}^{n}; \mathbb{R}^{n}) = \left\{u\in L^2(\R^n) \middle| \iint_{\mathbb{R}^{2n}}k_{\de}(|x-y|) {Du(x,y)\over |x-y|} dydx < \infty \right\}.
\]
We now seek to demonstrate a continuous embedding result for Sobolev spaces into the space $X(\Om_{\de};\R^n)$. To accomplish this, we need a quantitative version of continuity in the $L^2$-norm; a local, scalar-valued analogue is discussed and proven in \cite{bourgain2001another}.

\begin{lemma}[Quantitative $L^2$-continuity]\label{xiSquaredTermSym}
For any $\xi \in \R^n\setminus\{0\}$, and all $v \in H^1(\R^n; \R^n)$ we have
\begin{equation}\label{xiSquaredTermEqSym}
    \int_{\R^n}\left|(v(y + \xi) - v(y)) \cdot \frac{\xi}{|\xi|}\right|^2dy \ \leq \ |\xi|^2\|\Sym(\grad v)\|^2_{L^2(\R^n; \R^{n \times n})}.
\end{equation}
\end{lemma}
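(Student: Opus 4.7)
The plan is to reduce the inequality to a pointwise bound obtained from the fundamental theorem of calculus, exploiting the crucial algebraic identity $\xi^\intercal A \xi = \xi^\intercal \Sym(A) \xi$ to introduce the symmetric gradient.

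First, by a standard density argument (using that $H^1(\R^n;\R^n)$ is the completion of $C_c^\infty(\R^n;\R^n)$ under the $H^1$-norm, which controls both sides continuously), I would reduce to the case $v \in C_c^\infty(\R^n;\R^n)$. Then, for fixed $\xi \neq 0$ and $y \in \R^n$, I would write
\[
v(y+\xi) - v(y) \;=\; \int_0^1 \grad v(y + t\xi)\,\xi\,dt,
\]
where $\grad v$ denotes the Jacobian matrix. Taking the inner product with $\xi/|\xi|$ gives
\[
\bigl(v(y+\xi) - v(y)\bigr) \cdot \frac{\xi}{|\xi|} \;=\; \frac{1}{|\xi|} \int_0^1 \xi^{\intercal}\, \grad v(y+t\xi)\,\xi\, dt.
\]

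The key step, which I expect to be the central (and only nontrivial) observation, is that the antisymmetric part of $\grad v$ is annihilated by the quadratic form $A \mapsto \xi^\intercal A \xi$. Indeed, for any antisymmetric matrix $W$ we have $\xi^\intercal W \xi = 0$, so $\xi^\intercal \grad v(y+t\xi)\,\xi = \xi^\intercal \Sym(\grad v(y+t\xi))\,\xi$. Applying Cauchy--Schwarz in the Fröbenius inner product yields
\[
\bigl|\xi^\intercal \Sym(\grad v(y+t\xi))\,\xi\bigr| \;\leq\; |\xi|^2\,|\Sym(\grad v(y+t\xi))|_F,
\]
so that
\[
\left|\bigl(v(y+\xi) - v(y)\bigr) \cdot \frac{\xi}{|\xi|}\right| \;\leq\; |\xi| \int_0^1 |\Sym(\grad v(y+t\xi))|_F\, dt.
\]

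To finish, I would square and apply Jensen's inequality (or equivalently Cauchy--Schwarz in $t$) to the right-hand side, obtaining
\[
\left|\bigl(v(y+\xi) - v(y)\bigr) \cdot \frac{\xi}{|\xi|}\right|^2 \;\leq\; |\xi|^2 \int_0^1 |\Sym(\grad v(y+t\xi))|_F^2\, dt.
\]
Integrating in $y$, applying Fubini, and performing the translation-invariant change of variables $z = y + t\xi$ for each fixed $t \in [0,1]$ gives
\[
\int_{\R^n} \left|\bigl(v(y+\xi) - v(y)\bigr) \cdot \frac{\xi}{|\xi|}\right|^2 dy \;\leq\; |\xi|^2 \int_0^1 \|\Sym(\grad v)\|_{L^2(\R^n;\R^{n\times n})}^2\, dt \;=\; |\xi|^2 \,\|\Sym(\grad v)\|_{L^2(\R^n;\R^{n\times n})}^2,
\]
which is the claimed bound. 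The density argument then transfers the inequality to all of $H^1(\R^n;\R^n)$.
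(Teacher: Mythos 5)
Your argument is correct and is essentially the same as the paper's: both proceed by density, write the difference quotient via the fundamental theorem of calculus, observe that only the symmetric part of $\grad v$ contributes to the quadratic form $\xi^\intercal(\cdot)\xi$, bound by Cauchy--Schwarz and Jensen in $t$, and use translation invariance of the $L^2$-norm. You merely spell out more explicitly the symmetrization step and the Fröbenius Cauchy--Schwarz inequality $|\xi^\intercal \Sym(\grad v)\xi| \leq |\xi|^2 |\Sym(\grad v)|_F$, which the paper applies tacitly.
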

\begin{proof}
We first prove the desired claim in the special case where $v \in C^{\infty}(\R^n; \R^n)$. Fix $\xi \in \R^n\setminus\{0\}$. Then by the Chain Rule, the Mean-Value Theorem for integrals, and the Cauchy-Schwarz Inequality, we have
\begin{multline}\label{xiSquaredTermSymEq1}
     \int_{\R^n}\left|(v(y + \xi) - v(y)) \cdot \frac{\xi}{|\xi|}\right|^2dy \ = \ 
     \frac{1}{|\xi|^2}\int_{\R^n}\left|\int^{1}_{0}\grad v(y + t\xi)\xi \cdot \xi dt\right|^2dy \\
      \ \leq \ \frac{1}{|\xi|^2}\int_{\R^n}\int^{1}_{0}|\Sym(\grad v(y + t\xi))\xi \cdot \xi|^2dtdy \ \leq \  |\xi|^2\|\Sym(\grad v)\|^2_{L^2(\R^n; \R^{n \times n})},
\end{multline}
where in the last step we have used invariance of the $L^2$-norm under translations, demonstrating the inequality for  
 $v \in C^{\infty}(\R^n; \R^n)$. The general case for $v \in H^1(\R^n; \R^n)$ follows  by density. 
\end{proof}

The estimate of Lemma~\ref{xiSquaredTermSym} will now be used to prove a continuous embedding result.

\begin{lemma}[Continuous embedding]\label{ctsXW}
For all $\de > 0$, we have
\begin{equation}\label{ctsXWEq}
\begin{split}
    [v]_{X(\Om_{\de}; \R^n)} \    & \lesssim \ |v|_{H^1(\Om; \R^n)}, \qquad \forall v \in H^1_0(\Om; \R^n).
    \end{split}
\end{equation}
That is, $H^1_0(\Om; \R^n) \hookrightarrow X_0(\Om_{\de}; \R^n)$, and the constant is independent of $\de$.
\end{lemma}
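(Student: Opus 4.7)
The plan is to reduce the claim to a direct application of Lemma~\ref{xiSquaredTermSym} after extending $v$ by zero and changing variables. Concretely, given $v \in H^1_0(\Omega; \mathbb{R}^n)$, I would first introduce its zero extension $\tilde v$ to all of $\mathbb{R}^n$; since $v$ vanishes on $\partial\Omega$ in the trace sense, $\tilde v$ belongs to $H^1(\mathbb{R}^n; \mathbb{R}^n)$ with $\nabla \tilde v = \nabla v$ a.e.\ on $\Omega$ and $\nabla \tilde v = 0$ a.e.\ on $\mathbb{R}^n \setminus \Omega$. Since $\mathcal{D}_\delta \subset \mathbb{R}^n \times \mathbb{R}^n$, and since $D\tilde v(x,y)$ agrees with $Dv(x,y)$ on $\mathcal{D}_\delta$ (because either $x$ or $y$ sits in $\Omega$ and the other sits in $\Omega_\delta$), it is enough to bound the integral over all of $\mathbb{R}^n \times \mathbb{R}^n$:
\[
[v]^2_{X(\Omega_\delta; \mathbb{R}^n)} \leq \iint_{\mathbb{R}^n \times \mathbb{R}^n} k_\delta(|x-y|)\, \frac{|D\tilde v(x,y)|^2}{|x-y|^2}\, dx\, dy.
\]

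Next I would perform the substitution $\xi = x - y$, $y = y$, which gives
\[
\iint_{\mathbb{R}^n \times \mathbb{R}^n} k_\delta(|x-y|)\, \frac{|D\tilde v(x,y)|^2}{|x-y|^2}\, dx\, dy = \int_{\mathbb{R}^n} \frac{k_\delta(|\xi|)}{|\xi|^2} \left( \int_{\mathbb{R}^n} \left|(\tilde v(y+\xi) - \tilde v(y)) \cdot \frac{\xi}{|\xi|}\right|^2 dy \right) d\xi.
\]
Now I invoke Lemma~\ref{xiSquaredTermSym} applied to $\tilde v \in H^1(\mathbb{R}^n; \mathbb{R}^n)$: the inner integral is bounded by $|\xi|^2 \|\Sym(\nabla \tilde v)\|^2_{L^2(\mathbb{R}^n; \mathbb{R}^{n \times n})}$. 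The factors of $|\xi|^2$ cancel, leaving the $\delta$-independent bound
\[
[v]^2_{X(\Omega_\delta; \mathbb{R}^n)} \leq \|\Sym(\nabla \tilde v)\|^2_{L^2(\mathbb{R}^n; \mathbb{R}^{n\times n})} \int_{\mathbb{R}^n} k_\delta(|\xi|)\, d\xi = \|\Sym(\nabla \tilde v)\|^2_{L^2(\mathbb{R}^n; \mathbb{R}^{n\times n})},
\]
where the last equality uses the normalization \eqref{kernelNormalization} of the kernel family.

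Finally, since $\tilde v$ is the zero extension of $v$, the symmetric gradient is supported in $\Omega$, so $\|\Sym(\nabla \tilde v)\|_{L^2(\mathbb{R}^n)} = \|\Sym(\nabla v)\|_{L^2(\Omega)} \leq \|\nabla v\|_{L^2(\Omega)} = |v|_{H^1(\Omega; \mathbb{R}^n)}$. This yields the claimed embedding with constant independent of $\delta$. The main subtlety — and essentially the only non-routine point — is verifying that the zero extension is legitimate in the sense that it preserves $H^1$ regularity and correctly extends $Dv$ to $D\tilde v$ on $\mathcal{D}_\delta$; this relies on $v \in H^1_0(\Omega; \mathbb{R}^n)$ having zero trace and can be handled by approximating $v$ by $C^\infty_c(\Omega; \mathbb{R}^n)$ functions and passing to the limit, which is straightforward given the continuity of both sides of \eqref{ctsXWEq} under $H^1$-convergence (the right-hand side is obvious, and the left-hand side is controlled through the same kernel identity applied to smooth approximants).
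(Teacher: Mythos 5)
Your proposal is correct and follows essentially the same route as the paper: zero-extend $v$ to $\mathbb{R}^n$, enlarge the domain of integration, change variables $\xi = x - y$, apply Lemma~\ref{xiSquaredTermSym} to the inner integral, and use the kernel normalization from \eqref{kernelNormalization} together with $\|\Sym(\nabla v)\|_{L^2} \leq \|\nabla v\|_{L^2}$. If anything, your write-up is slightly more explicit than the paper's (which compresses the normalization step and the passage from $\Sym(\nabla v)$ to $\nabla v$ into a single inequality), and your closing remark about justifying the zero extension is already covered by the Lipschitz-boundary hypothesis, so the density detour is not needed.
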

\begin{proof}
Since $\pa \Om$ is Lipschitz, for any $v \in H^1_0(\Om; \R^n)$ its extension by zero outside of  $\Om$ is in $H^1_0(\mathbb{R}^{n}, \R^n)$ vanishing almost everywhere outside of $\Om.$ 
Now for any $\de>0$, we have 
\begin{equation}\label{ctsXWEq1}
\begin{split}
    [v]^2_{X(\Om_{\de}; \R^n)} &= \iint_{\Om_{\de}\times \Om_{\de}} k_{\de}(|x-y|) \left|{Du(x,y)\over |x-y|}\right|^{2}
dydx\\
&\leq \int_{B(0, \de)}\frac{k_{\de}(\xi)}{|\xi|^2}\int_{\R^n}\left|(v(y + \xi) - v(y)) \cdot \frac{\xi}{|\xi|}\right|^2dyd\xi,
\end{split}
\end{equation}
where we have used that $\supp(k_{\de}) \subset B(0, \de)$. Now our expression is in a form on which we can use Lemma \ref{xiSquaredTermSym} on the inner integral to conclude that
\begin{equation}\label{ctsXWEq2}
   \int_{B(0, \de)}\frac{k_{\de}(\xi)}{|\xi|^2}\int_{\R^n}\left|(v(y + \xi) - v(y)) \cdot \frac{\xi}{|\xi|}\right|^2dyd\xi \ \leq \ \|\Sym(\grad v)\|^2_{L^2(\R^n; \R^{n \times n})} \leq \| \grad v \|_{L^2(\Om; \R^{n \times n})}^2,
\end{equation}
which completes the proof.
\end{proof}

Next we show that compactly supported smooth functions are dense in $X_0(\Om_{\de}; \R^n)$. 
\begin{lemma}[Density]\label{bddSupVecFieldDense}
The set 
\[
  \left\{ v \in X(\R^n; \R^n) \ \middle| \  \exists R>0: \supp(v) \subset B(0,R) \right\}
\]
is dense in $X(\R^n; \R^n)$.
\end{lemma}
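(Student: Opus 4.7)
The plan is a standard truncation argument: approximate $v \in X(\R^n;\R^n)$ by $v_R := \eta_R v$, where $\eta_R$ is a radial smooth cutoff satisfying $\eta_R \equiv 1$ on $B(0,R)$, $\supp(\eta_R) \subset B(0,2R)$, and $\|\nabla \eta_R\|_{L^\infty} \leq C/R$. Each $v_R$ has compact support in $B(0,2R)$, so the only nontrivial task is to show that $v_R \to v$ in $X(\R^n;\R^n)$ as $R \to \infty$.

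Convergence in $L^2$ is immediate from dominated convergence since $|v - v_R|^2 = |v|^2(1-\eta_R)^2 \leq |v|^2$ and $(1-\eta_R(x))^2 \to 0$ pointwise. For the seminorm, I would use the pointwise identity
\begin{equation*}
D(v-v_R)(x,y) \ = \ (1-\eta_R(x))\,Dv(x,y) \;+\; (\eta_R(y)-\eta_R(x))\,v(y)\cdot\frac{x-y}{|x-y|},
\end{equation*}
which, combined with $|v(y)\cdot(x-y)/|x-y||\leq|v(y)|$ and the elementary inequality $(a+b)^2\leq 2a^2+2b^2$, gives
\begin{equation*}
\frac{|D(v-v_R)(x,y)|^2}{|x-y|^2} \ \leq \ 2(1-\eta_R(x))^2 \frac{|Dv(x,y)|^2}{|x-y|^2} \;+\; 2\,|v(y)|^2\,\frac{(\eta_R(x)-\eta_R(y))^2}{|x-y|^2}.
\end{equation*}
Integrating against $k_\de(|x-y|)\,dy\,dx$, the first term tends to $0$ by dominated convergence: it is dominated by the integrable function $2 k_\de(|x-y|)|Dv(x,y)|^2/|x-y|^2$ and its pointwise limit is $0$.

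For the second term I would exploit two facts simultaneously. First, the mean value theorem and the bound on $\nabla\eta_R$ give
\begin{equation*}
\frac{(\eta_R(x)-\eta_R(y))^2}{|x-y|^2} \ \leq \ \|\nabla \eta_R\|_{L^\infty}^2 \ \leq \ \frac{C}{R^2}.
\end{equation*}
Second, since $\supp(k_\de) \subset B(0,\de)$, the factor $k_\de(|x-y|)$ forces $|x-y|\leq\de$; combined with the fact that $\eta_R(x)=\eta_R(y)$ whenever $x,y$ both lie in $B(0,R)$ or both lie outside $B(0,2R)$, this restricts the support of the second integrand (in $y$) to the annulus $\{R-\de \leq |y| \leq 2R+\de\}$. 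Using Assumption~\ref{kernelAssump} (the normalization $\int k_\de = 1$) and Fubini, I would bound
\begin{equation*}
\iint_{\R^{2n}} k_\de(|x-y|)\,|v(y)|^2\,\frac{(\eta_R(x)-\eta_R(y))^2}{|x-y|^2}\,dy\,dx \ \leq \ \frac{C}{R^2}\int_{\{|y|\geq R-\de\}}|v(y)|^2\,dy,
\end{equation*}
which tends to $0$ as $R\to\infty$ because $v\in L^2$. Combining the two estimates shows $[v - v_R]_{X(\R^n;\R^n)}\to 0$, completing the proof.

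The main obstacle is handling the second term, since the cross term $(\eta_R(x)-\eta_R(y))v(y)$ is not dominated by a fixed $X$-integrable function. The two inputs needed to control it — the global $C/R$ Lipschitz bound on $\eta_R$ and the localization of $k_\de$ in $B(0,\de)$ — are precisely what the cutoff construction and Assumption~\ref{kernelAssump} are designed to provide.
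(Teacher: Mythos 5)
Your proposal is correct and follows essentially the same truncation argument as the paper: multiply by a smooth cutoff $\eta_R$, split $D(v-v_R)$ by the product rule into a term carrying $Dv$ times a cutoff factor plus a cross term carrying the difference quotient of the cutoff, and kill the first by dominated convergence. The only cosmetic difference is in handling the cross term: the paper packages the cutoff difference quotient into a function $K_R(x)$ and invokes dominated convergence once more, whereas you use the Lipschitz bound $\|\nabla\eta_R\|_\infty\lesssim 1/R$ together with the normalization $\int k_\de = 1$ to get an explicit $O(R^{-2})$ bound — both are valid, and yours is if anything slightly more direct.
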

\begin{proof}
Let $\vphi \in C_0^{0,1}(\R^n)$ be such that $0 \leq \vphi \leq 1$, $\vphi \equiv 1$ in $B(0,1)$, $\supp (\vphi) \subset B(0,2)$, and $\|\grad \vphi \|_{L^\infty(\R^n;\R^n)} \leq 1$. For $R>0$ define $\vphi_R(x) = \vphi(x/R)$ and $\psi_R := 1 - \vphi_R$.

Let $u \in X(\R^n; \R^n)$, and we claim that 
\begin{equation}\label{EjDensTarget}
    \lim_{R \rightarrow \infty}[u - u\vphi_R]_{X(\Om_{\de}; \R^n)}^{2}  \ = \ 0.
\end{equation}
To this end, we compute
\begin{equation}\label{bddSuppVecFieldDenseEq2}
\begin{aligned} 
    [u\psi_R]^{2}_{X(\Om_{\de}; \R^n)} &\ = \iint_{\R^{2n}}|D(u\psi_R)(x, y)|^2\frac{k_{\de}(x - y)}{|x - y|^2}dxdy \\
    \ &\lesssim \ \iint_{\R^{2n}}|\psi_R(x) - \psi_R(y)|^2 \left|u(x) \cdot \frac{x - y}{|x - y|}\right|^2\frac{k_{\de}(x - y)}{|x - y|^2}dxdy \\ & + \iint_{\R^{2n}}\psi_R(y)^2\left|(u(x) - u(y)) \cdot \frac{x - y}{|x - y|}\right|^2\frac{k_{\de}(x - y)}{|x - y|^2}dxdy.
\end{aligned}
\end{equation}
By the Dominated Convergence Theorem we deduce
\begin{equation}\label{bddSuppVecFieldDenseEq3}
    \lim_{R \rightarrow \infty} \iint_{\R^{2n}}\psi_R(y)^2\left|(u(x) - u(y)) \cdot \frac{x - y}{|x - y|}\right|^2\frac{k_{\de}(x - y)}{|x - y|^2}dxdy \ = \ 0.
\end{equation}
Now, to handle the first integral in \eqref{bddSuppVecFieldDenseEq2} we define
\begin{equation}\label{bddSuppVecFieldDenseEq4}
    K_R(x) \ := \ \int_{\R^n}|\psi_R(x) - \psi_R(y)|^2\frac{k_{\de}(x - y)}{|x - y|^2}dy.
\end{equation}
By using the conditions on $k_{\de}$ and the Lipschitz continuity of $\psi$, the sequence $|K_R(x)|$ is uniformly bounded in $R$ and in $x$. Further, $K_R(x) \rightarrow 0$ pointwise on $\R^n$ as $R \rightarrow \infty$, so we may, once again,  use the Dominated Convergence Theorem to conclude that
\begin{equation}\label{bddSuppVecFieldDenseEq6}
    \lim_{R \rightarrow \infty}\int_{\R^n}|u(x)|^2K_R(x)dx \ = \ 0,
\end{equation}
proving \eqref{EjDensTarget}. Finally, with one more application of the Dominated Convergence Theorem, we see that
\begin{equation}\label{bddSuppVecFieldDenseEq7}
    \lim_{R \rightarrow \infty}\|u - u\vphi_R\|^2_{L^2(\R^n; \R^n)} \ = \ \lim_{R \rightarrow \infty}\|u\psi_R\|^2_{L^2(\R^n; \R^n)} \ = \ 0,
\end{equation}
and this lets us complete the proof, with $\{u\vphi_R\}^{\infty}_{R = 1}$ as our approximating family of functions in $X(\R^n; \R^n)$ with bounded support.
\end{proof}

The following result is analogous to \cite[Proposition 4.1]{jarohsstrong} and \cite[Lemma 5.2]{scott2018fractional}.

\begin{lemma}[Mollification]\label{cptSupDjDense}
Let $u \in X(\R^n; \R^n)$ be a vector field that vanishes outside a compact subset of $\R^n$. For $\ep>0$ denote by $\eta_\ep$ a standard mollifier, and $u_\ep = u * \eta_\ep$. Then, for $\ep$ sufficiently small, we have
$u_{\ep} \in X(\R^{n}; \R^n)$. Moreover,
\begin{equation}\label{cptSupDjDenseEq}
    \lim_{\ep \rightarrow 0^+}[u - u_{\ep}]^2_{X(\Om_{\de}; \R^n)} \ = \ 0.
\end{equation}
\end{lemma}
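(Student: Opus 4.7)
The plan is to reduce the convergence statement to the classical $L^2$ theory of mollification by establishing a ``diagonal'' convolution identity for the projected difference $Du$. First, since $u$ vanishes outside a compact set $K \subset \R^n$ and $\eta_{\ep}$ is supported in $B(0,\ep)$, the mollification $u_{\ep}$ is smooth and supported in the compact set $K + B(0,\ep)$, so $u_{\ep} \in L^2(\R^n;\R^n)$ for every $\ep > 0$. The key observation is that $(x-z) - (y-z) = x-y$ for every $z \in \R^n$, so the direction vector $(x-y)/|x-y|$ is unaffected by a simultaneous translation, and linearity of the convolution gives
\[
  Du_{\ep}(x,y) \ = \ \int_{\R^n} \eta_{\ep}(z)\, Du(x-z, y-z)\, dz;
\]
that is, $Du_{\ep}$ is the convolution of $Du$ along the diagonal direction in $\R^{2n}$.

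Armed with this identity, I would first verify the membership $u_{\ep} \in X(\R^n;\R^n)$. Applying Jensen's inequality (equivalently, Cauchy--Schwarz with the probability measure $\eta_{\ep}(z)\,dz$) to the identity above, then swapping the $z$-integral outside via Fubini and invoking the translation invariance of Lebesgue measure on $\R^{2n}$ under $(x,y)\mapsto (x-z, y-z)$, gives the a priori bound $[u_{\ep}]_{X(\R^n;\R^n)}^{2} \leq [u]_{X(\R^n;\R^n)}^{2}$, which is finite by hypothesis. This in fact holds for every $\ep>0$, not just for $\ep$ small.

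For the convergence claim, I would change variables via $h = x-y$ with $y$ retained, so that
\[
  [u - u_{\ep}]_{X(\R^n;\R^n)}^{2} \ = \ \int_{\R^n} \frac{k_{\de}(|h|)}{|h|^{2}}\, \|G(\cdot,h) - G_{\ep}(\cdot,h)\|_{L^{2}(\R^n)}^{2}\, dh,
\]
where $G(y,h) := (u(y+h)-u(y))\cdot h/|h|$, and where the diagonal convolution identity translates to the one-variable statement $G_{\ep}(\cdot,h) = \eta_{\ep} * G(\cdot,h)$ for each fixed $h$. By Fubini--Tonelli and the finiteness of $[u]_X^2$, the function $G(\cdot,h)$ lies in $L^2(\R^n)$ for a.e.\ $h$ in the support of $k_{\de}$, so the classical $L^2$ mollifier theory supplies both the pointwise-in-$h$ limit $\|G_{\ep}(\cdot,h) - G(\cdot,h)\|_{L^{2}(\R^n)} \to 0$ as $\ep \to 0^+$ and the uniform bound $\|G_{\ep}(\cdot,h) - G(\cdot,h)\|_{L^{2}(\R^n)}^{2} \leq 4\|G(\cdot,h)\|_{L^{2}(\R^n)}^{2}$. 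The dominating function $h \mapsto 4\, k_{\de}(|h|)|h|^{-2}\|G(\cdot,h)\|_{L^{2}(\R^n)}^{2}$ has total integral equal to $4[u]_{X(\R^n;\R^n)}^{2} < \infty$, so the dominated convergence theorem applied to the outer $h$-integral establishes \eqref{cptSupDjDenseEq} over $\R^{2n}$, whence convergence over the subdomain appearing in $[\,\cdot\,]_{X(\Om_{\de};\R^n)}$ follows a fortiori.

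The main obstacle is really just recognizing the diagonal-convolution structure of $Du_{\ep}$; once that identity is in hand, both the membership claim and the limit reduce to standard mollifier estimates combined with the dominated convergence theorem. A secondary, more technical point is verifying that the pointwise-in-$h$ mollifier convergence is legitimate, which requires $G(\cdot,h) \in L^{2}(\R^n)$ for a.e.\ $h$ in $\supp(k_{\de})$, and this is free from the finiteness of $[u]_X^2$ via Fubini--Tonelli.
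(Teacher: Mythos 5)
Your proposal is correct, and it takes a genuinely different and arguably cleaner route than the paper's. The paper establishes the a priori bound $[u_\ep]_{X}\leq [u]_X$ by expanding the square into a four-fold integral, massaging it into a $(\eta_\ep*\eta_\ep)$-weighted correlation, and then applying Cauchy--Schwarz; it then defines $U,U_\ep\in L^2(\R^{2n})$, establishes $\|U_\ep\|\to\|U\|$ via Fatou and the a priori bound, and proves $U_\ep\rightharpoonup U$ weakly in $L^2(\R^{2n})$ by a truncation argument with the auxiliary functions $V_j$ and $h_1^j,h_2^j$, finally combining weak convergence with norm convergence to get strong convergence. Your proof instead isolates the diagonal-convolution identity $Du_\ep(x,y)=\int\eta_\ep(z)\,Du(x-z,y-z)\,dz$, obtains the a priori bound in one line by Jensen and translation invariance, and then reduces the convergence claim to the classical scalar statement $\eta_\ep*G(\cdot,h)\to G(\cdot,h)$ in $L^2(\R^n)$ for a.e.\ fixed offset $h$, with the outer $h$-integral handled by dominated convergence (the dominant $4k_\de(|h|)|h|^{-2}\|G(\cdot,h)\|_{L^2}^2$ is integrable precisely because $[u]_X<\infty$). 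This sidesteps the weak-convergence machinery entirely. You also correctly observe that neither the a priori bound nor the convergence actually requires $\ep$ small, only $\ep>0$, and that $G(\cdot,h)\in L^2(\R^n)$ for \emph{every} $h$ since $u\in L^2(\R^n;\R^n)$, so the ``a.e.\ $h$'' caveat in your argument is automatically satisfied. The only cosmetic remark is that neither your argument nor the paper's uses the compact support of $u$ in an essential way for the convergence (it is needed elsewhere in the density chain), but both proofs correctly note that $u_\ep$ is smooth and compactly supported.
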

\begin{proof}
Let $K \subset \R^{n}$ be a compact set so that $\supp(u) \subset K$. Then 
$u_{\ep} \in C^{\infty}_0(\R^n; \R^n)$ is supported in $K_{\ep} := \{x \in \R^n, \ \dist(x, K) \leq \ep\}$ for any $\ep > 0$. Since the mollifier $\eta_{\ep}$ is even, we may use Hölder's Inequality, Jensen's Inequality, and the identity
\begin{equation}\label{cptSupDjDenseEq0A}
    \int_{\R^n}(\eta_{\ep}*\eta_{\ep})(z)dz \ = \ \left(\int_{\R^n}\eta_{\ep}(z)dz\right)^2 \ = \ 1
\end{equation}
to obtain the estimate
\begin{equation}\label{cptSupDjDenseEq1}
\begin{aligned}
   [u_{\ep}]^{2}_{X(\Om_{\de}; \R^n)} \ &=  \iiiint_{\R^{4n}}\eta_{\ep}(z)\eta_{\ep}(z')Du(x - z, y - z)Du(x - z', y - z')\frac{k_{\de}(x - y)}{|x - y|^2}dz'dzdxdy \ \\
   &= \ 
   \iiiint_{\R^{4n}}\eta_{\ep}(z)\eta_{\ep}(z')Du(x + z' - z, y + z' - z)Du(x, y)\frac{k_{\de}(x - y)}{|x - y|^2}dz'dzdxdy  \ \\
    &=\iiiint_{\R^{4n}}\eta_{\ep}(z - z')\eta_{\ep}(z')Du(x + z, y + z)Du(x, y)\frac{k_{\de}(x - y)}{|x - y|^2}dz'dzdxdy \  \ \\
    &=\iint_{\R^{2n}}\frac{k_{\de}(x - y)}{|x - y|^2}\int_{\R^n}(\eta_{\ep} * \eta_{\ep})(z)Du(x, y)Du(x + z, y + z)dzdxdy  \ \\
    &\leq [u]_{X(\Om_{\de}; \R^n)}\left(\iint_{\R^{2n}}\left(\int_{\R^n}(\eta_{\ep} * \eta_{\ep})(z)|Du(x + z, y + z)|dz\right)^2 \frac{k_{\de}(x - y)}{|x - y|^2}dxdy\right)^{\frac{1}{2}}  \ \\
    &\leq [u]_{X(\Om_{\de}; \R^n)}\left(\int_{\R^n}(\eta_{\ep} * \eta_{\ep})(z)\iint_{\R^{2n}}|Du(x + z, y + z)|^2\frac{k_{\de}(x - y)}{|x - y|^2}dxdydz\right)^{\frac{1}{2}} \ \ \\
    &=[u]^{2}_{X(\Om_{\de}; \R^n)}
\end{aligned}
\end{equation}
which holds for all $\ep > 0$. As a consequence, $u_{\ep} \in X(\R^n; \R^n)$ for all $\ep > 0$.
To proceed further, we define the maps $U, U_{\ep}: \R^n \times \R^n \rightarrow \R$ as
\begin{equation}\label{UUepMaps}
        U(x, y) \ = \ Du(x, y)\sqrt{\frac{k_{\de}(x - y)}{|x - y|^2}} \qquad
        U_{\ep}(x, y) \ = \ Du_{\ep}(x, y)\sqrt{\frac{k_{\de}(x - y)}{|x - y|^2}},
\end{equation}
and these definitions in turn imply that
\begin{equation}\label{cptSupDjDenseEq2}
    [u - u_{\ep}]^2_{X(\Om_{\de}; \R^n)} \ = \ \iint_{\R^{2n}}|D(u - u_{\ep})(x, y)|^2\frac{k_{\de}(x - y)}{|x - y|^2}dxdy \ = \ \|U - U_{\ep}\|^2_{L^2(\R^n \times \R^n)}.
\end{equation}
The proof will be complete once we show that $U_{\ep} \rightarrow U$ in $L^2(\R^n \times \R^n)$ as $\ep \rightarrow 0^+$. As is standard for mollifiers, $u_{\ep} \rightarrow u$ strongly in $L^2(\R^n; \R^n)$, and a.e. pointwise in $\R^n$, both as $\ep \rightarrow 0^+$. Thus by Fatou's Lemma, we get the convergence \begin{equation}\label{cptSupDjDenseEq3}
    \lim_{\ep \rightarrow 0^+}\|U_{\ep}\|_{L^2(\R^n \times \R^n)} \ \geq \ \|U\|_{L^2(\R^n \times \R^n)},
\end{equation}
while the reverse inequality follows from sending $\ep \rightarrow 0^+$ in \eqref{cptSupDjDenseEq1}. This combined with showing $U_{\ep} \rightharpoonup U$ in $L^2(\R^n \times \R^n)$ is enough to show the strong convergence in $L^2(\R^n \times \R^n)$ that we seek, so we focus on proving this weak convergence. Let $V \in L^2(\R^n \times \R^n)$ be arbitrary and define the function
\begin{equation}\label{cptSupDjDenseEq4}
    V_j(x, y) \ := \ \begin{dcases} 
    V(x, y), |x|, |y| \leq j, |x - y| \geq \frac{1}{j} \\
    0, \ \ \ \ \ \ \ \ \ \ \text{otherwise}.
    \end{dcases}
\end{equation}
With this definition in mind, the Dominated Convergence Theorem tells us that $V_j \rightarrow V$ in $L^2(\R^n \times \R^n)$ as $j \rightarrow \infty$. We define $h_1^j, h_2^j: \R^n \rightarrow \R^n$ such that
\begin{equation}\label{cptSupDjDenseEq5}
    \begin{aligned}
        h_1^j(x) \ &:= \ \int_{\R^n}\sqrt{\frac{k_{\de}(x - y)}{|x - y|^2}}V_j(x, y)\frac{x - y}{|x - y|}dy; \\
        h_2^j(y) \ &:= \ \int_{\R^n}\sqrt{\frac{k_{\de}(x - y)}{|x - y|^2}}V_j(x, y)\frac{x - y}{|x - y|}dx. 
    \end{aligned}
\end{equation}
Since $\sqrt{\frac{k_{\de}(x - y)}{|x - y|^2}} \leq 1 + \frac{k_{\de}(x - y)}{|x - y|^2}$ for all $x, y \in \R^n$, we can see that these functions have bounded support, and thus belong to $L^2(\R^n; \R^n)$ for all $j \in \N^+$. Then due to the a.e. convergence $u_{\ep} \rightarrow u$, we have 
\begin{equation}\label{cptSupDjDenseEq6}
    \begin{aligned}
        \lim_{\ep \rightarrow 0^+}\iint_{\R^{2n}}U_{\ep}(x, y)V_j(x, y)dxdy \ &= \ \lim_{\ep \rightarrow 0^+}\iint_{\R^{2n}}[u_{\ep}(x) - u_{\ep}(y)]\sqrt{\frac{k_{\de}(x - y)}{|x - y|^2}}\cdot V_j(x, y)dxdy \ \\
        \ &= \ \lim_{\ep \rightarrow 0^+}\left(\int_{\R^n}u_{\ep}(x)\cdot h_1^j(x)dx - \int_{\R^n}u_{\ep}(y)\cdot h_2^j(y)dy\right) \ \\
        \ &= \ \int_{\R^n}u(x) \cdot h^j_1(x)dx - \int_{\R^n}u(y) \cdot h_2^j(y)dy \\
        \ &= \ \iint_{\R^{2n}}Du(x, y)\sqrt{\frac{k_{\de}(x - y)}{|x - y|^2}}V_j(x, y)dxdy \\
        \ &= \ \iint_{\R^{2n}}U(x, y)V_j(x, y)dxdy,
    \end{aligned}
\end{equation}
which holds for all $j \in \N^+$. Taking a limit supremum in $\ep$, the convergence in \eqref{cptSupDjDenseEq6}, and applying Hölder inequality gives
\begin{equation}\label{cptSupDjDenseEq6A}
    \begin{aligned}
    &\limsup_{\ep \rightarrow 0^+}\left|\iint_{\R^{2n}}(U_{\ep} - U)(x, y)V(x, y)dxdy\right|  \\ 
    \ &= \ \limsup_{\ep \rightarrow 0^+}\left|\iint_{\R^{2n}}(U_{\ep} - U)(x, y)(V - V_j)(x, y)dxdy\right|  \ \\
    &\leq \ \limsup_{\ep \rightarrow 0^+}\|U_{\ep} - U\|_{L^2(\R^n \times \R^n)}\|V - V_j\|_{L^2(\R^n \times \R^n)} \\
    &\leq \ 2\|U\|_{L^2(\R^n \times \R^n)}\|V - V_j\|_{L^2(\R^n \times \R^n)} \ ,
    \end{aligned}
\end{equation}
which holds for all $j \in \N^+$. Finally, due to $V_j \rightarrow V$ in $L^2(\R^n \times \R^n)$, we obtain the limit
\begin{equation}\label{cptSupDjDenseEq7}
    \lim_{\ep \rightarrow 0^+}\iint_{\R^{2n}}(U_{\ep} - U)(x, y)V(x, y)dxdy \ = \ 0,
\end{equation}
and from this it follows that $U_{\ep} \rightharpoonup U$ in $L^2(\R^n \times \R^n)$, completing the proof.
\end{proof}
We can now combine Lemma \ref{bddSupVecFieldDense} and Lemma \ref{cptSupDjDense} to immediately obtain the density of  $C^{\infty}_0(\R^n; \R^n)$ in $X(\R^n; \R^n)$, which we state below as a corollary, see \cite[Remark 4.2]{Jarlocalcompactness} and \cite{foghem2022general} for the scalar case. 
\begin{corollary}[Density]\label{DensityGen}
The space $C^{\infty}_0(\R^n; \R^n)$ is dense in $X(\R^n; \R^n)$.
\end{corollary}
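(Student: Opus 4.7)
The plan is to combine the two preceding approximation lemmas in sequence, since one reduces an arbitrary element of $X(\R^n;\R^n)$ to a compactly supported element, and the other smooths a compactly supported element while staying in $X$.

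Fix $u \in X(\R^n;\R^n)$ and $\epsilon > 0$. I would first invoke Lemma~\ref{bddSupVecFieldDense} to choose $w \in X(\R^n;\R^n)$ with $\supp(w) \subset B(0,R)$ for some $R>0$, satisfying
\[
\|u - w\|_{L^2(\R^n;\R^n)} + [u - w]_{X(\Om_{\de};\R^n)} \ < \ \epsilon/2.
\]
Strictly speaking, the statement of Lemma~\ref{bddSupVecFieldDense} gives seminorm and $L^2$ convergence separately (the former through \eqref{EjDensTarget} and the latter through \eqref{bddSuppVecFieldDenseEq7}), so both pieces are in hand. This puts us in the hypothesis class of Lemma~\ref{cptSupDjDense}.

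Next, apply Lemma~\ref{cptSupDjDense} to $w$. For each $\tau>0$ set $w_\tau := w * \eta_\tau$, where $\eta_\tau$ is the standard mollifier. Since $w$ is compactly supported, $w_\tau \in C^\infty_0(\R^n;\R^n)$ for every $\tau>0$ (its support sits in a $\tau$-neighborhood of $\supp(w)$). The lemma gives $[w - w_\tau]_{X(\Om_{\de};\R^n)} \to 0$ as $\tau \to 0^+$, and the classical mollifier theory gives $\|w - w_\tau\|_{L^2(\R^n;\R^n)} \to 0$. Pick $\tau$ small enough so that the sum of these two quantities is less than $\epsilon/2$.

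Finally, by the triangle inequality applied to the norm
\[
\|\cdot\|_{X(\Om_{\de};\R^n)} \ = \ \bigl(\|\cdot\|_{L^2}^2 + [\cdot]_{X(\Om_{\de};\R^n)}^2\bigr)^{1/2},
\]
we conclude $\|u - w_\tau\|_{X(\Om_{\de};\R^n)} < \epsilon$, which exhibits the desired approximation by a $C^\infty_0(\R^n;\R^n)$ function. Since no obstacle arises beyond verifying that both the $L^2$ part and the seminorm part are simultaneously controlled at each step, and since the two lemmas were precisely tailored to handle those two parts, this composition is essentially all that is needed; there is no single hard step, only a careful bookkeeping of the two contributions to the norm.
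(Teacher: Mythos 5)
Your proposal is correct and takes essentially the same approach as the paper, which derives the corollary by combining Lemma~\ref{bddSupVecFieldDense} (truncation to bounded support) with Lemma~\ref{cptSupDjDense} (mollification of compactly supported elements); the paper simply states this as immediate, and your write-up spells out the two-step bookkeeping of the $L^2$ and seminorm contributions.
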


For well-posedness of the state system (specifically, for stability) we shall need a nonlocal Poincar\'e-type inequality. In addition, to understand the behavior of our system in the limit as $\de \rightarrow 0^+$, it is essential that the constant in this inequality is independent of $\de$. The following result was proven in \cite{mengesha2014bond}, but various versions of this inequality are proved in  \cite{bellido2014existence, bellido2015hyperelasticity, buczkowski2022sensitivity, Du1, du2013analysis, foss2019nonlocal, mengesha2014nonlocal, Pon, parini2020compactness, ern2017finite}. 

\begin{proposition}[Nonlocal Poincaré]\label{sharperPoincare}
There exists a $\de_0 > 0$ and a constant $C(\de_0) > 0$ such that for all $\de \in (0, \de_0]$ and $u \in X_0(\Om_{\de}; \R^n)$, we have
\begin{equation}\label{sharperPoincareIneq}
    \|u\|^2_{L^2(\Om; \R^n)} \ \leq \ C(\de_0)\int_{\Om_{\de}}\int_{\Om_{\de}}k_{\de}(x - y) \frac{|Du(x, y)|^2}{|x - y|^2}dxdy.
\end{equation}
\end{proposition}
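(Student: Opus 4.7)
The plan is a contradiction/compactness argument in the spirit of Bourgain-Brezis-Mironescu, adapted to the vector-valued projected-difference setting.

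Suppose the claim fails. Then there exist sequences $\de_j \downarrow 0$ and $u_j \in X_0(\Om_{\de_j}; \R^n)$ satisfying
\[
\|u_j\|_{L^2(\Om;\R^n)} = 1, \qquad [u_j]^2_{X(\Om_{\de_j};\R^n)} \to 0.
\]
Extending each $u_j$ by zero to $\R^n$ is permissible since $u_j \equiv 0$ on $\Om_{\de_j}\setminus\Om$, and \eqref{ourExtensionIneq} controls the extended seminorm on any fixed ball containing $\Om_{\de_1}$. The next step is to apply a BBM-type compactness theorem for families whose projected-difference seminorms $[\cdot]_{X(\Om_{\de_j};\R^n)}$ are uniformly bounded as $\de_j \to 0^+$. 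Such a result is available in the vector-valued setting of \cite{mengesha2014bond} and yields, after passing to a subsequence (not relabeled), a limit $u \in L^2(\Om;\R^n)$ with $u_j \to u$ strongly in $L^2(\Om;\R^n)$. In particular $\|u\|_{L^2(\Om;\R^n)} = 1$.

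The companion liminf/representation result (a projected-difference analogue of the BBM formula, proved in the same reference under Assumption~\ref{kernelAssump}) identifies the pointwise limit of the seminorms with a constant multiple of the $L^2$ norm of the symmetric gradient, so that
\[
c_n \int_\Om |\Sym(\nabla u)(x)|^2 dx \leq \liminf_{j \to \infty} [u_j]^2_{X(\Om_{\de_j};\R^n)}
\]
for a positive dimensional constant $c_n$, with the convention that the left-hand side is $+\infty$ when $u \notin H^1(\Om;\R^n)$. Since the right-hand side vanishes, $u \in H^1(\Om;\R^n)$ and $\Sym(\nabla u) \equiv 0$ a.e. Moreover, the vanishing of each $u_j$ on $\Om_{\de_j}\setminus\Om$, combined with $\de_j \to 0^+$ and the Lipschitz regularity of $\pa\Om$, forces $u \in H^1_0(\Om;\R^n)$ via a standard trace argument. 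The classical Korn inequality on $H^1_0(\Om;\R^n)$ then gives $\nabla u \equiv 0$, hence $u \equiv 0$ on $\Om$, contradicting $\|u\|_{L^2(\Om;\R^n)} = 1$.

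The main obstacle is the compactness and liminf step: establishing that bounded sequences in the $\de_j$-dependent projected-difference seminorms are precompact in $L^2(\Om;\R^n)$, and that their liminf controls the symmetric gradient of the limit. Both rely crucially on the concentration property \eqref{kernelNormalization} together with the monotonicity of $r \mapsto k_{\de_j}(r) r^{-2}$ from Assumption~\ref{kernelAssump}, which guarantees that the mass of the kernel is not concentrated too sharply on a scale mismatched with the difference quotient and that the limiting quadratic form pairs correctly with $\Sym(\nabla u)$ rather than with the full gradient.
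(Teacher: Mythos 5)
Your compactness-by-contradiction argument matches the approach of \cite{mengesha2014bond}, which the paper cites rather than proves, so this is essentially the same route. The one shaky step is your appeal to \eqref{ourExtensionIneq}: its constant depends on $\de$, so it provides no uniform control along $\de_j \to 0^+$. Fortunately it is also unnecessary. Since $u_j = 0$ on $\Om_{\de_j}\setminus\Om$, extension by zero preserves the seminorm exactly: for $(x,y)\notin\Om_{\de_j}\times\Om_{\de_j}$ either $|x-y|\geq\de_j$ and the kernel vanishes, or both points lie outside $\Om$ so that $D\widetilde{u}_j(x,y)=0$. Hence $[\widetilde{u}_j]_{X(\Om_1;\R^n)} = [u_j]_{X(\Om_{\de_j};\R^n)}$, which is precisely the bookkeeping the paper itself uses in Lemma \ref{convergenceStateControl} before invoking \cite[Proposition 4.2]{mengesha2015variational} or \cite[Theorem 2.5]{mengesha2012nonlocal}. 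Those results supply the $L^2$-precompactness and identification of the limit in $H^1_0(\Om;\R^n)$ that you need; Korn's inequality on $H^1_0(\Om;\R^n)$ then closes the argument as you describe.
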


With the aid of above Poincar\'e-type inequality we may apply Lax-Milgram to deduce the unique solvability of the state equations of the nonlocal optimal control problem stated in the previous section. We summarize this with the following corollary.

\begin{corollary}[Well-posedness of state equation]\label{Well-state}
The state equations \eqref{stateSystem}, \eqref{locProb}, \eqref{stateSystemNLD}, and \eqref{stateSystemLocDiscrete} are uniquely solvable in their corresponding energy spaces.  
\end{corollary}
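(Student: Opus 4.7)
The plan is to apply the Lax–Milgram theorem four times, once for each of the state equations, after verifying continuity and coercivity of each bilinear form on the corresponding energy space, together with continuity of the load functional.

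For the continuous nonlocal problem \eqref{stateSystem}, I would begin by observing that the bilinear form $B_\de$ is automatically continuous on $X_0(\Om_\de;\R^n)\times X_0(\Om_\de;\R^n)$: the upper bound $H(x,y)\le h_{\max}$ combined with Cauchy–Schwarz yields $|B_\de(u,v)|\le \tfrac{h_{\max}}{2}[u]_X[v]_X\le \tfrac{h_{\max}}{2}\|u\|_X\|v\|_X$. For coercivity, the lower bound $H(x,y)\ge h_{\min}$ gives $B_\de(u,u)\ge \tfrac{h_{\min}}{2}[u]_X^2$, and I would invoke the nonlocal Poincaré inequality of Proposition~\ref{sharperPoincare} to upgrade this to $B_\de(u,u)\ge c\,\|u\|_{X(\Om_\de;\R^n)}^2$ for all $\de\in(0,\de_0]$, the point being that the zero boundary condition encoded in $X_0(\Om_\de;\R^n)$ is precisely what Proposition~\ref{sharperPoincare} requires. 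Since $g\in L^2(\Om;\R^n)$ and $X_0(\Om_\de;\R^n)\hookrightarrow L^2(\Om;\R^n)$ continuously, the functional $v\mapsto \langle g,v\rangle$ is bounded on $X_0(\Om_\de;\R^n)$, so Lax–Milgram applies.

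For the local problem \eqref{locProb}, continuity of $B_0$ is immediate from the definition \eqref{Bh0} and the bound $h\le h_{\max}$. The key step here is coercivity, which I expect to be the main obstacle, since $B_0$ involves only the symmetric gradient and divergence rather than the full gradient. I would use Korn's inequality on $H^1_0(\Om;\R^n)$ to bound $\|\nabla u\|_{L^2}^2\lesssim \|\Sym(\nabla u)\|_{L^2}^2$, then combine with the classical Poincaré inequality to get $B_0(u,u)\gtrsim h_{\min}\|u\|_{H^1(\Om;\R^n)}^2$; the divergence term is nonnegative and may simply be dropped for the lower bound. With continuity of the load $v\mapsto\langle g,v\rangle$ on $H^1_0(\Om;\R^n)$, Lax–Milgram again gives unique solvability.

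For the discrete problems \eqref{stateSystemNLD} and \eqref{stateSystemLocDiscrete}, I would observe that $X_{\de,h}\subset X_0(\Om_\de;\R^n)$ and $X_h\subset H^1_0(\Om;\R^n)$ are closed subspaces (in fact finite-dimensional), so the continuity and coercivity estimates established above restrict without change to these subspaces. The right-hand side $v_h\mapsto\langle g_{\de,h},v_h\rangle$ (resp.\ $\langle g_h,v_h\rangle$) is likewise continuous by the embedding into $L^2(\Om;\R^n)$. Hence Lax–Milgram once more delivers unique solutions $u_{\de,h}\in X_{\de,h}$ and $u_h\in X_h$, completing the proof. The only technical point worth flagging is that the coercivity constants from Proposition~\ref{sharperPoincare} and Lemma~\ref{ctsXW} are independent of $\de$ (for $\de\in(0,\de_0]$), which will be needed in the subsequent $\Gamma$-convergence and asymptotic compatibility sections but is not required here beyond ensuring each problem is individually well-posed.
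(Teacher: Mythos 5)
Your proposal is correct and follows the same route as the paper, namely applying Lax--Milgram after combining the bounds $h_{\min}\le H\le h_{\max}$ with the nonlocal Poincar\'e inequality of Proposition~\ref{sharperPoincare} (for the nonlocal problems) and Korn plus classical Poincar\'e (for the local ones); the paper's proof is a one-line invocation of Lax--Milgram, so your version simply supplies the details the paper leaves implicit.
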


From standard linear theory, we know that the solution operator of the state equations is linear and continuous.  One important fact we need to demonstrate the solvability of optimal control   problems is the compactness of this solution operator. While for the discrete problems this question is trivial, for the continuous problems it needs a resolution. The compactness of the solution operator is related to the compactness of the image space which, for \eqref{stateSystem}, is  $X_0(\Om_{\de}; \R^n)$; whereas for \eqref{locProb} is $H_{0}^1(\Om;\R^n)$. The compactness of the latter in $L^2(\Om;\R^n)$ is standard.

Below we build a framework needed to ultimately prove the compact embedding os $X_0(\Om_{\de}; \R^n)$ into $L^2(\Om_{\de}; \R^n)$. This is  much akin to the compact embedding results for fractional Sobolev spaces; see, for instance, \cite{demengel2012functional, Di}.  This will largely be based on the results of \cite{Jarlocalcompactness}, see also \cite{gounoue20202}, which we extend to vector-valued functions using a weaker norm that only involves a projected difference quotient. To this end we introduce a definition.

\begin{definition}[Local compactness]\label{localCompact}
If $E$ is a normed vector space, we call a continuous linear operator $T: E \rightarrow L^2(\R^n; \R^n)$ \textbf{locally compact} if the operator $R_KT: E \rightarrow L^2(\R^n; \R^n)$ defined via the truncation function $R_Ku := \mathds{1}_K u$ is a compact operator for every compact subset $K \subset \R^n$.
\end{definition}

The following proposition demonstrates that it suffices to show $X(\R^n; \R^n) \subset L^2(\R^n; \R^n)$ is a locally compact embedding.

\begin{proposition}[Compactness]\label{jIProperties} 
If $X(\R^n; \R^n) \subset L^2(\R^n; \R^n)$ is a locally compact embedding, then for every bounded and open $\Om \subset \R^n$, and every $\de>0$, the embedding $X_0(\Om_{\de}; \R^n) \subset L^2(\Om; \R^n)$ is compact.
\end{proposition}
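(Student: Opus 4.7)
The strategy is to transfer the problem to $\R^n$ by zero-extension across the nonlocal boundary and then invoke the local compactness hypothesis. The key observation is that every $u \in X_0(\Om_{\de};\R^n)$ already vanishes on $\Om_{\de}\setminus\Om$, so its extension by zero to the whole of $\R^n$, denoted $\widetilde{u}$, is supported in the fixed compact set $K:=\overline{\Om}$. By the extension estimate \eqref{ourExtensionIneq} applied with $B=\R^n$---permitted since the constant there is independent of $B$---the assignment $u\mapsto\widetilde{u}$ defines a bounded linear map from $X_0(\Om_{\de};\R^n)$ into $X(\R^n;\R^n)$.

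Having set this up, the proof essentially writes itself. Given a bounded sequence $\{u_k\}\subset X_0(\Om_{\de};\R^n)$, I form the corresponding zero-extensions $\{\widetilde{u_k}\}\subset X(\R^n;\R^n)$, which are bounded by the previous paragraph and all supported in $K$. In particular $R_K\widetilde{u_k}=\widetilde{u_k}$ for every $k$. By the local compactness hypothesis applied with this choice of $K$, the sequence $\{R_K\widetilde{u_k}\}$ admits a subsequence converging strongly in $L^2(\R^n;\R^n)$; restricting this subsequence to $\Om$ produces a subsequence of $\{u_k\}$ converging in $L^2(\Om;\R^n)$, which is exactly the desired compactness.

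No genuine obstacle is expected. The only two points that require a moment's care are the applicability of \eqref{ourExtensionIneq} with $B=\R^n$, which is precisely the independence-of-$B$ statement emphasized immediately after that inequality, and the observation that boundedness of $\Om$ makes $\overline{\Om}$ a legitimate compact set on which to apply Definition \ref{localCompact}. In other words, the argument amounts to a short gluing of the extension inequality with the definition of local compactness.
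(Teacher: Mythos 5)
Your proof is correct and follows essentially the same route as the paper: zero-extend to $\R^n$, observe that the extension of any $u\in X_0(\Om_\de;\R^n)$ is supported in $\overline{\Om}$, and apply the local compactness hypothesis with $K=\overline{\Om}$. The only cosmetic difference is that you invoke the extension inequality \eqref{ourExtensionIneq} to bound the extension operator, whereas the paper simply notes that for zero-boundary data the $X$-seminorm of $u$ equals that of its zero extension (so the extension is in fact an isometry on the seminorm), but both observations serve the same purpose.
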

\begin{proof}
As we remarked earlier, for every  $u\in X_0(\Om_{\de};\R^n)$, its extension by zero outside of $\Om_{\de}$ belongs to $X(\R^n;\R^n)$. Moreover, $[u]_{X(\Om_{\de}; \R^n)} = [u]_{X(\R^n; \R^n)}$. Now if the inclusion $\mathfrak{i}:X(\R^n;\R^n) \subset L^2(\R^n;\R^n)$ is locally compact, then in Definition \ref{localCompact}, we can set $K := \overline{\Om}$ to conclude that $R_K \mathfrak{i} : X(\R^n;\R^n) \to L^2(\Om;\R^n)$ is compact. The result now follows easily.  
\end{proof}

We now prove the local compact embedding of $X(\R^n; \R^n)$ in the remaining portion of this section. We follow the argument in  \cite{Jarlocalcompactness}.

\begin{lemma}[Convolution]\label{localcompactvecfield}
Suppose $W \in L^1(\R^n;\R^{n\times n})$ is a matrix-valued function with $L^1$-entries. Then the corresponding convolution operator $T_W: L^2(\R^n; \R^n) \rightarrow L^2(\R^n; \R^n)$ defined via
\begin{equation}\label{localCompactConvolution}
    [T_W u(x)]_{i} =  [(W * u)(x)]_{i} := \ \int_{\R^n} [W(x-y)]_{i,\cdot}\cdot u(y)dy =\sum_{j=1}^{n}\int_{\R^n} [W(x-y)]_{i,j} [u(y)]_jdy
\end{equation}
for each $i \in \{1, 2, \dots, n\}$ is locally compact.
\end{lemma}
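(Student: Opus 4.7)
The plan is to prove local compactness of $T_W$ by a density-and-approximation strategy: approximate $W$ in $L^1(\R^n;\R^{n\times n})$ by smooth, compactly supported matrix-valued kernels $W_\eta$, show that each $R_K T_{W_\eta}$ is compact on $L^2(\R^n;\R^n)$ by Arzelà-Ascoli, and then conclude that $R_K T_W$ is compact as the operator-norm limit of compact operators. Density of $C_c^\infty(\R^n;\R^{n\times n})$ in $L^1(\R^n;\R^{n\times n})$ (componentwise) makes the first step automatic.

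The operator-norm control comes from Young's convolution inequality applied componentwise. Writing $[T_W u]_i = \sum_j [W]_{i,j} * u_j$, Minkowski and Young yield
\begin{equation*}
\|T_W u - T_{W_\eta} u\|_{L^2(\R^n;\R^n)} \lesssim \|W - W_\eta\|_{L^1(\R^n;\R^{n\times n})}\,\|u\|_{L^2(\R^n;\R^n)},
\end{equation*}
so multiplication by $\mathds{1}_K$ gives $\|R_K T_W - R_K T_{W_\eta}\|_{L^2 \to L^2} \to 0$ as $\eta \to 0^+$. Since compact operators form a norm-closed ideal, once compactness of $R_K T_{W_\eta}$ is established the conclusion follows.

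The core step is verifying that $R_K T_{W_\eta}$ is compact whenever $W_\eta \in C_c^\infty(\R^n;\R^{n\times n})$. The vector/matrix structure is inessential here: by linearity it suffices to show that for each scalar kernel $\omega \in C_c^\infty(\R^n)$ the map $v \mapsto \mathds{1}_K\,(\omega * v)$ is compact on $L^2(\R^n)$. Given a bounded sequence $\{v_m\}\subset L^2(\R^n)$, Cauchy--Schwarz gives $\|\omega * v_m\|_{L^\infty(\R^n)} \leq \|\omega\|_{L^2(\R^n)}\|v_m\|_{L^2(\R^n)}$, uniform continuity of $\omega$ gives the equicontinuity estimate
\begin{equation*}
|(\omega * v_m)(x) - (\omega * v_m)(x')| \leq \|\omega(\cdot - x) - \omega(\cdot - x')\|_{L^2(\R^n)}\|v_m\|_{L^2(\R^n)} \to 0
\end{equation*}
uniformly in $m$ as $|x - x'| \to 0$, and truncation by $\mathds{1}_K$ confines supports to the compact set $K$. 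Arzelà--Ascoli then produces a subsequence converging uniformly on $K$, hence in $L^2(K)$, and the zero extension yields convergence in $L^2(\R^n)$. Assembling these scalar compactness statements through the finitely many matrix entries $[W_\eta]_{i,j}$ gives compactness of $R_K T_{W_\eta}$.

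I expect the main difficulty to be mostly bookkeeping rather than substantive: keeping track of matrix/vector indices while invoking the classical scalar arguments, and being careful that the cutoff $\mathds{1}_K$ is applied at the right moment — the untruncated operator $T_W$ is translation invariant and cannot be compact, so compactness emerges only after restricting the output to $K$ and combining this with the smoothness produced by convolution with a smooth kernel.
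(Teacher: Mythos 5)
Your proof is correct and follows essentially the same path as the paper: reduce to the scalar case componentwise and invoke local compactness of convolution with an $L^1$ kernel. The paper simply defers that scalar step to a citation of \cite[Lemma 3.1]{Jarlocalcompactness}, whereas you unpack it via density of $C_c^\infty$ in $L^1$, Young's inequality, and Arzel\`a--Ascoli, which is the standard argument behind that reference.
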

\begin{proof}
The proof follows from \cite[Lemma 3.1]{Jarlocalcompactness}  after noting that for $i=1, 2, \dots, n$,  $[T_W u]_{i}$ is a finite sum convolution operators  which are locally compact.   
\end{proof}

\begin{theorem}[Local compactness]\label{Djcompact}
Fix $\de >0$. 
Suppose that $\frac{k_{\de}(\xi)}{|\xi|^2} \notin L^1(\R^n)$, then the space  
$X(\R^n; \R^n)$ is locally compactly embedded in $L^2(\R^n; \R^n).$
\end{theorem}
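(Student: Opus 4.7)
My strategy is to adapt the Fourier-analytic approach of \cite{Jarlocalcompactness} (used there in the scalar case) to the vector-valued setting with projected differences. Let $\{u_j\}\subset X(\R^n;\R^n)$ be a bounded sequence, so that $\|u_j\|_{L^2}+[u_j]_{X}\le M$, and let $K\subset\R^n$ be compact. I want to extract a subsequence converging in $L^2(K;\R^n)$.

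First I would re-express the seminorm via Plancherel. A change of variables gives
\[
    [u]^2_{X(\R^n;\R^n)}\;=\;\int_{\R^n}\frac{k_\de(|h|)}{|h|^2}\int_{\R^n}\left|(u(x+h)-u(x))\cdot\frac{h}{|h|}\right|^2 dx\,dh
    \;=\;2\int_{\R^n}\overline{\hat u(\xi)}^{\,T} M_\de(\xi)\,\hat u(\xi)\,d\xi,
\]
where the matrix-valued symbol
\[
    M_\de(\xi)\;:=\;\int_{\R^n}\frac{k_\de(|h|)}{|h|^2}(1-\cos(\xi\cdot h))\,\frac{h\otimes h}{|h|^2}\,dh
\]
is Hermitian and positive semi-definite. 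The next step is to prove the key spectral statement: $\lambda_{\min}(M_\de(\xi))\to\infty$ as $|\xi|\to\infty$. For any unit vector $v\in\R^n$, switching to polar coordinates $h=r\omega$,
\[
    v^T M_\de(\xi) v \;=\; \int_0^\infty k_\de(r)\,r^{n-3}\!\!\int_{S^{n-1}}(1-\cos(r\,\xi\cdot\omega))(v\cdot\omega)^2\,d\sigma(\omega)\,dr.
\]
Riemann–Lebesgue applied to the inner integral shows, for each fixed $r>0$, that the inner integral tends to $|S^{n-1}|/n$ as $|\xi|\to\infty$. Fatou's lemma combined with the hypothesis $k_\de(\xi)/|\xi|^2\notin L^1(\R^n)$, which translates into $\int_0^\infty k_\de(r) r^{n-3}dr=\infty$, therefore yields $\liminf_{|\xi|\to\infty}v^T M_\de(\xi)v=+\infty$. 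A compactness/continuity argument on the unit sphere then upgrades this to a uniform lower bound $\lambda_{\min}(M_\de(\xi))\ge\lambda(R)$ for $|\xi|\ge R$, with $\lambda(R)\to\infty$ as $R\to\infty$.

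With this symbol estimate in hand, I split $u_j = u_j\ast\eta_\varepsilon + (u_j-u_j\ast\eta_\varepsilon)$ for a standard mollifier $\eta_\varepsilon$. The mollified term is a convolution with the matrix kernel $\eta_\varepsilon I\in L^1(\R^n;\R^{n\times n})$, so Lemma~\ref{localcompactvecfield} makes $\{u_j\ast\eta_\varepsilon\}_j$ locally compact in $L^2$: a subsequence converges in $L^2(K;\R^n)$. For the remainder, Plancherel gives
\[
    \|u_j-u_j\ast\eta_\varepsilon\|_{L^2}^2 \;=\; \int_{\R^n}|1-\hat\eta_\varepsilon(\xi)|^2\,|\hat u_j(\xi)|^2\,d\xi
    \;\le\; \sup_{|\xi|\le R}|1-\hat\eta_\varepsilon(\xi)|^2\,\|u_j\|_{L^2}^2 \;+\; \frac{4}{\lambda(R)}\,[u_j]_X^2,
\]
where on the high-frequency region I used $|\hat u_j|^2 \le \lambda_{\min}(M_\de(\xi))^{-1}\,\hat u_j^{\,*}M_\de\hat u_j$. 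Given $\eta>0$, I first pick $R$ large so that $4M^2/\lambda(R)<\eta/2$, then $\varepsilon$ small so that $\hat\eta_\varepsilon\to 1$ uniformly on $\{|\xi|\le R\}$ makes the first term $<\eta/2$. Uniformity in $j$ is thus established.

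A standard diagonal extraction over $\varepsilon=\varepsilon_m\to 0^+$ then produces a single subsequence of $\{u_j\}$ that is Cauchy in $L^2(K;\R^n)$, which proves the local compact embedding. The main technical obstacle is the spectral claim $\lambda_{\min}(M_\de(\xi))\to\infty$: the naive compactness argument on the sphere is complicated by the fact that $\|M_\de(\xi)\|$ itself is unbounded, so cross terms of the form $(v_*-v_m)^T M_\de(\xi_m)(v_*-v_m)$ must be controlled using the PSD structure and the Cauchy–Schwarz inequality adapted to the seminorm $\sqrt{\,\cdot\, M_\de(\xi)\,\cdot\,}$, rather than the operator norm.
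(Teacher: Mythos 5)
Your approach is genuinely different from the paper's proof. The paper never passes to the Fourier side: it truncates the non-integrable kernel $k_\de(\xi)/|\xi|^2$ away from the origin to obtain $j_\tau\in L^1(\R^n)$, forms the matrix-valued approximate identity $J_\tau(\xi)=c_n\,\frac{j_\tau(\xi)}{\|j_\tau\|_{L^1}}\frac{\xi\otimes\xi}{|\xi|^2}$ normalized so that $\int_{\R^n}J_\tau=\mathbb{I}_n$, and uses the exact identity $u-T_{j_\tau}u=\int J_\tau(\xi)\bigl(u(\cdot)-u(\cdot+\xi)\bigr)d\xi$ plus a Cauchy--Schwarz estimate in the $j_\tau$-weighted measure to get $\|u-T_{j_\tau}u\|_{L^2}\le\|j_\tau\|_{L^1}^{-1/2}[u]_X$. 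Since $\|j_\tau\|_{L^1}\to\infty$, this shows that $R_K(M)$ lies in an arbitrarily small neighborhood of the relatively compact sets $R_KT_{j_\tau}(M)$. Your Plancherel route, with the matrix symbol $M_\de(\xi)$ and mollifier decomposition, is a legitimate and recognizably distinct alternative (and is the natural generalization of the scalar arguments you cite). However, it is incomplete at exactly the place you flag.

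The gap is the passage from the pointwise statement ``$v^{\intercal}M_\de(\xi)v\to\infty$ as $|\xi|\to\infty$ for each fixed unit vector $v$'' to the uniform statement ``$\lambda_{\min}(M_\de(\xi))\to\infty$.'' As you observe, a naive compactness argument on the sphere does not close: if $|\xi_m|\to\infty$ with $\lambda_{\min}(M_\de(\xi_m))\le C$ and $v_m$ are minimizing unit eigenvectors with $v_m\to v_*$, expanding $v_*^{\intercal}M_\de(\xi_m)v_*$ leaves (after Cauchy--Schwarz for the PSD form on the cross term) the quadratic error $(v_*-v_m)^{\intercal}M_\de(\xi_m)(v_*-v_m)$, and nothing you have controls it: $\|v_*-v_m\|\to0$ while $\|M_\de(\xi_m)\|$ may grow faster. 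Your proposed remedy treats the cross term but not this diagonal one, so the reduction is not complete as written.

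Two short repairs are available. First, use rotational covariance: since $k_\de$ is radial, $M_\de(R\xi)=RM_\de(\xi)R^{\intercal}$ for every rotation $R$, so $M_\de(|\xi|e_1)$ is diagonal, its spectrum is the set of diagonal entries $e_i^{\intercal}M_\de(|\xi|e_1)e_i$, and each of these tends to $+\infty$ by the Fatou/Riemann--Lebesgue argument applied with $v=e_i$; hence $\lambda_{\min}(M_\de(\xi))$ depends only on $|\xi|$, is continuous in $|\xi|$, and tends to $+\infty$, which gives $\inf_{|\xi|\ge R}\lambda_{\min}(M_\de(\xi))\to\infty$. Second, establish the Riemann--Lebesgue decay on the sphere uniformly: writing $(v\cdot\omega)^2=\sum_{i,j}v_iv_j\omega_i\omega_j$, the oscillatory integral $\int_{S^{n-1}}\cos(s\,\theta\cdot\omega)(v\cdot\omega)^2\,d\sigma(\omega)$ is a finite linear combination, with coefficients bounded by $1$, of fixed scalar oscillatory integrals each tending to $0$; the decay is therefore uniform in $v$ and $\theta$, and your lower bound is already uniform before any compactness argument. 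Either way you recover the needed symbol estimate (note this tacitly uses $n\ge2$), after which the rest of your argument---mollifier splitting, Lemma~\ref{localcompactvecfield} for the convolution, the high-frequency bound by $\lambda(R)^{-1}[u_j]_X^2$ (your constant $4/\lambda(R)$ can be tightened to $2/\lambda(R)$), and the diagonal extraction---goes through.
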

\begin{proof}
For $\tau > 0$, let $j_{\tau}(\xi) := \frac{k_{\de}(\xi)}{|\xi|^2} \mathds{1}_{\R^n\setminus{B(0, \tau)}}(\xi)$. Then  $j_{\tau} \in L^1(\R^n)$ and that, by assumption on $k_\de$, we have that $\|j_{\tau}\|_{L^1(\R^n)} \to \infty $ as $\tau\to 0$. We now introduce the  matrix-valued function 
\begin{equation}\label{DjcompactEq1}
    J_{\tau}(\xi) \ := c_n \ \frac{j_{\tau}}{\|j_{\tau}\|_{L^1(\R^n)}} \frac{\xi\otimes \xi}{|\xi|^2}.
\end{equation}
where $c_n$ is a normalizing constant that depends only on $n$ so that 
\begin{equation}\label{DjcompactEq2}
    \int_{\R^n}J_{\tau}(\xi)d\xi \ = \ \mathbb{I}_{n}, \quad \text{the identity matrix.}
\end{equation}
Let $u \in X(\R^n; \R^n)$, and we claim that
\begin{equation}\label{DjcompactEq3}
    \|u - T_{j_{\tau}}u\|_{L^2(\R^n;\R^n)} \ \leq \ \left(\frac{1}{\|j_{\tau}\|_{L^1(\R^n)}}\right)^{\frac{1}{2}}[u]_{X(\R^n; \R^n)}.
\end{equation}
We prove this via a direct calculation: rewrite $u - T_{j_{\tau}}u$ as
\begin{equation}\label{DjcompactEq3A}
    u(x) - T_{j_{\tau}}(u)(x) \ = \ \int_{\R^n}J_{\tau}(\xi)(u(x) - u(x + \xi))d\xi.
\end{equation}
Now, we calculate the $L^2(\R^n; \R^n)$-norm, and estimate it with the Cauchy-Schwarz Inequality and the pointwise inequality $j_{\tau}(\xi) \leq \frac{k_{\de}(\xi)}{|\xi|^2}$:
\begin{equation}\label{DjcompactEq3B}
    \begin{aligned}
        &\|u - T_{j_{\tau}}u\|^2_{L^2(\R^n; \R^n)} \ = \ \int_{\R^n}\left|\int_{\R^n}\frac{j_{\tau}( \xi)}{\|j_{\tau}\|_{L^1(\R^n)}}\left((u(x) - u(x + \xi)) \cdot \frac{\xi}{|\xi|}\right)\frac{\xi}{|\xi|} d\xi\right|^2 dx \\
        &\ \leq \ \frac{1}{\|j_{\tau}\|_{L^1(\R^n)}}\iint_{\R^{2n}}j_{\tau}(\xi)\left|(u(x) - u(x + \xi)) \cdot \frac{\xi}{|\xi|}\right|^2d\xi dx 
        \ \leq \ \frac{1}{\|j_{\tau}\|_{L^1(\R^n)}}[u]^2_{X(\R^n; \R^n)}.
    \end{aligned}
\end{equation}
Taking square roots in \eqref{DjcompactEq3B} immediately yields \eqref{DjcompactEq3}. 

Now let $M \subset X(\R^n; \R^n)$ be a bounded set, and $K \subset \R^n$ be compact; our proof will be complete once we show that $R_K(M) \subset L^2(\R^n; \R^n)$ is relatively compact. To this end, let $C := \sup_{u\in M}\|u\|_{X(\R^n; \R^n)}$ and $\ep > 0$. Since $\frac{k_{\de}(\xi)}{|\xi|^2} \notin L^1(\R^n)$, we may take $\tau > 0$ to be sufficiently small so that $\|j_{\tau}\|_{L^1(\R^n)} \geq \frac{C^2}{\ep^2}$. 
By Lemma \ref{localcompactvecfield}, the set $\til{M} := [R_KT_{j_{\tau}}](M)$ is relatively compact in $L^2(\R^n; \R^n)$. Thus we may use the estimate \eqref{DjcompactEq3} to obtain, for any $u \in M$, 
\begin{eqnarray}\label{DjcompactEq6}
    \|R_Ku - [R_KT_{j_{\tau}}]u\|_{L^2(\R^n; \R^n)} \ \leq \ \|u - T_{j_{\tau}}u\|_{L^2(\R^n; \R^n)} \ &\leq \ \\
    \left(\frac{1}{\|j_{\tau}\|_{L^1(\R^n)}}\right)^{\frac{1}{2}}[u]_{X(\R^n; \R^n)} \ \leq \ \frac{\ep\|u\|_{X(\R^n; \R^n)}}{C} \ &\leq \ \ep.
\end{eqnarray}
From this we conclude that $R_K(M)$ is contained within an $\ep$-neighborhood of $\til{M}$, and which is relatively compact in $L^2(\R^n; \R^n)$ (since $j_{\tau} \in L^1(\R^n)$). Thus, $R_K(M)$ is totally bounded in $L^2(\R^n; \R^n)$, which is a sufficient condition for the local compact embedding to hold.
\end{proof}

\begin{remark} We make two remarks. First, the assumption $j_0={k_{\de}(\xi)\over |\xi|^{2}} \notin L^1(\mathbb{R}^{n})$ cannot be waived. Indeed, otherwise, we have $X(\R^n; \R^n) = L^2(\R^n; \R^n)$ with the norm estimate that $[u]_{X(\R^n; \R^n)}^{2} \leq 4\|j_0\|_{L^1}\|u\|^2_{L^2(\R^n; \R^n)}$. 
Second, a similar type of compactness result, with a proof that uses a different approach (see \cite{braides2002gamma}),  is also established in \cite{mengesha2014bond} under the assumption on the kernel $k_{\de}$ that 
\begin{equation}\label{DMT-CC}
\lim_{\varrho \to 0}{\varrho^{2} \over \int_{B_{\varrho}(0)} k_{\de}(\xi)d\xi } \ = \ 0.
\end{equation}
On the one hand, if  $k_\de$ satisfies \eqref{DMT-CC}, then  ${k_{\de}(\xi)\over |\xi|^{2}} \notin L^1(\mathbb{R}^{n})$. Otherwise, by continuity of the integral, 
\begin{equation}\label{ctyIntegralKern}
\lim_{\varrho\to 0}\int_{B_{\varrho}(0)} {k_{\de}(\xi)\over |\xi|^{2}} d\xi \ = \ 0,
\end{equation}
from which it follows that $\lim_{\varrho\to0}{\varrho^2\over \int_{B_\varrho(0)} k_{\de}(\xi) d\xi} = \infty,$ contradicting \eqref{DMT-CC}.  As such \eqref{DMT-CC} is a more restrictive assumption on $k_\de.$ On the other hand, there are kernels with the property that ${k_{\de}(\xi)\over |\xi|^{2}} \notin L^1(\mathbb{R}^{n})$ that fail to satisfy \eqref{DMT-CC}. For example, the kernel $k_{\de}(\xi) = {1 \over |\xi|^{n-2}}\chi_{B_{\de}(0)}(\xi)$  has the property that $\lim_{\varrho \to 0}{\varrho^{2} \over \int_{B_{\varrho}(0)} k_{\de}(\xi)d\xi } >0$, yet ${k_{\de}(\xi)\over |\xi|^{2}} =  {1 \over |\xi|^{n}}\chi_{B_{\de}(0)}(\xi) \notin L^1(\mathbb{R}^{n})$. 
\end{remark}


\section{Well-posedness: state system and minimization}\label{oc}

In this section we show existence and uniqueness of solutions for each one of the optimal control problems introduced in Section \ref{sec:Notation}. The approach we use is a reduced formulation where the constrained optimization is reformulated as an unconstrained optimization of the control via the solution operator of the  state equation. To facilitate that we begin by proving an abstract well-posedness result that appears in some form in  \cite{hinze2008optimization, troltzsch2010optimal}; we provide a proof for the sake of completeness.

\begin{theorem}[Well-posedness]\label{abstractWP}
Let $(Y, \|\cdot\|_Y)$ be a real Banach space with $L^2(\Om;\R^n) \subset Y^*$. Suppose also that  $S: L^2(\Om; \R^n) \rightarrow Y$ is a compact operator, and $G: Y \rightarrow \R$ is lower semi-continuous. For a given $\la \geq 0$ and $Z_{\text{ad}}$ a nonempty, closed, bounded, and convex subset of $L^2(\Om; \R^n)$, define $j: Z_{\text{ad}} \rightarrow \R$ by
\begin{equation}\label{bondAbstractCostFunctional}
  j(g) \ := \ G(Sg) + \frac{\la}{2}\int_{\Om}\Ga(x) |g(x)|^2 dx, 
\end{equation}
for some non-negative $\Ga\in L^1(\Om)$. Then, the optimization problem
\begin{equation}\label{HilbertOptCompact}
    \min_{g \in Z_{\text{ad}}} j(g) 
\end{equation}
has a solution $\til{g}$. Furthermore, if $\la > 0$, $S$ is linear, and $G$ is convex, then \eqref{HilbertOptCompact} has a unique minimizer. 
Alternatively, if $\la=0$ and  $G$ is strictly convex on its domain (with $S$ still being linear), then the minimizer is unique.
\end{theorem}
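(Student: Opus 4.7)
The plan is to apply the direct method of the calculus of variations, leveraging the compactness of $S$ to transform weak convergence of controls into strong convergence of states, so that lower semi-continuity of $G$ can be used.

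\textbf{Existence.} First I would check that $j$ is bounded below on $Z_{\text{ad}}$: since $Z_{\text{ad}}$ is bounded in $L^2(\Om;\R^n)$ and $S$ is compact, the set $S(Z_{\text{ad}})$ has compact closure $K \subset Y$, on which the lower semi-continuous function $G$ attains its minimum; combined with the nonnegativity of the quadratic term (recall $\Ga \geq 0$), this shows $\inf_{Z_{\text{ad}}} j \in \R$. Next, I would pick a minimizing sequence $\{g_k\}_{k \in \N} \subset Z_{\text{ad}}$. Boundedness of $Z_{\text{ad}}$ and reflexivity of $L^2(\Om;\R^n)$ allow me to extract (up to a non-relabeled subsequence) a weak limit $g_k \rightharpoonup \tilde g$ in $L^2(\Om;\R^n)$. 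Since $Z_{\text{ad}}$ is convex and strongly closed, it is weakly closed by Mazur's theorem, so $\tilde g \in Z_{\text{ad}}$.

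\textbf{Passing to the limit.} The compactness of $S$ upgrades the weak convergence of the controls to strong convergence $Sg_k \to S\tilde g$ in $Y$. Lower semi-continuity of $G$ then yields $G(S\tilde g) \leq \liminf_k G(Sg_k)$. Simultaneously, the map $g \mapsto \int_\Om \Ga(x)|g(x)|^2\,dx$ is convex and continuous on $L^2(\Om;\R^n)$, hence weakly lower semi-continuous, giving $\int_\Om \Ga |\tilde g|^2 \leq \liminf_k \int_\Om \Ga |g_k|^2$. Combining these two inequalities with the factor $\la/2$ produces $j(\tilde g) \leq \liminf_k j(g_k) = \inf_{Z_{\text{ad}}} j$, so $\tilde g$ is a minimizer.

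\textbf{Uniqueness.} For uniqueness I would argue by strict convexity of $j$. When $S$ is linear and $G$ is convex, the composition $g \mapsto G(Sg)$ is convex on $L^2(\Om;\R^n)$. In the case $\la > 0$, the regularization $\tfrac{\la}{2}\int_\Om \Ga|g|^2\,dx$ is strictly convex (under the standing positivity of $\Ga$), so $j$ is strictly convex; given two candidate minimizers $g_1 \neq g_2$ in $Z_{\text{ad}}$, the midpoint $\tfrac{1}{2}(g_1+g_2) \in Z_{\text{ad}}$ (by convexity of $Z_{\text{ad}}$) would yield $j\bigl(\tfrac{g_1+g_2}{2}\bigr) < \tfrac{1}{2}\bigl(j(g_1)+j(g_2)\bigr) = \inf j$, a contradiction. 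In the alternative case $\la = 0$ with $G$ strictly convex on its domain, the same midpoint argument works because strict convexity of $G$ transfers to strict convexity of $G \circ S$ along directions in which $S$ is nontrivial.

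\textbf{Main point.} The crucial structural input — rather than a genuine obstacle — is the compactness of $S$: without it one could only pass $Sg_k$ to a weak limit, which is not enough to exploit mere lower semi-continuity of $G$. Everything else is a routine application of the direct method, and the convexity arguments for uniqueness are standard.
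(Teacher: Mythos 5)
Your argument is the direct method, which is also the paper's approach, and it is essentially correct. One slip worth flagging: you assert that $g \mapsto \int_\Om \Ga(x)|g(x)|^2\,dx$ is ``convex and continuous on $L^2(\Om;\R^n)$.'' With $\Ga$ merely in $L^1(\Om)$ this functional need not be continuous on $L^2(\Om;\R^n)$ -- it may not even be finite-valued on all of $L^2(\Om;\R^n)$. The correct (and sufficient) statement is that it is a nonnegative, convex, strongly lower semi-continuous functional (taking values in $[0,\infty]$; strong lsc follows from Fatou's lemma), and hence weakly lower semi-continuous. The paper sidesteps this altogether by a different device: using $|g_m| \preceq |b|$ with $b \in L^\infty$ to bound $\sqrt{\Ga}\,g_m$ in $L^2(\Om;\R^n)$, extracting a further weakly convergent subsequence, and invoking weak lower semi-continuity of the $L^2$-norm. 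Either repair is fine, but as written the word ``continuous'' is not justified. Your argument for the lower bound via compactness of $\overline{S(Z_{\text{ad}})}$ is a cleaner alternative to the paper's contradiction argument, and your uniqueness discussion (including the caveat about directions in $\ker S$) matches the paper's intent, which only states the uniqueness claim informally.
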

\begin{proof}
We use the direct method of calculus of variations to show that \eqref{HilbertOptCompact} has a solution.  First, we note that $j$ is bounded from below. Indeed, since the second term is nonnegative for all $g$, it suffices to demonstrate that the first term is bounded from below. 
To that end, assume otherwise. 
Then there exists a sequence $\{w_m\}^{\infty}_{m = 1} \subset Z_{\text{ad}}$ such that
\begin{equation}\label{gCompactlscEq1}
    G(S w_m) \ < \ -m
\end{equation}
for all $m \in \N^+$. However, $Z_{\text{ad}}$ is a closed, bounded, convex subset of a Hilbert space, and, by \cite[Theorem 2.11]{troltzsch2010optimal}, it is weakly sequentially compact. It follows that some sub-sequence $\{w_{m_k}\}^{\infty}_{k = 1}$ of $\{w_m\}^{\infty}_{n = 1}$ converges weakly to some $\overline{w} \in Z_{\text{ad}}$. Since $S$ is a compact operator, $Sw_{m_k} \rightarrow Sw$ as $k \rightarrow \infty$ strongly in $Y$. Since $G$ is lower semi-continuous, we have
\begin{equation}\label{gCompactlscEq2}
  \ G(S\overline{w})\leq  \ \liminf_{k \rightarrow \infty}G(Sw_{m_k}) \ = -\infty,
\end{equation}
which poses a contradiction, since $G$ does not assume the value $-\infty$.  

We henceforth denote $j_0 := \inf_{g \in Z_{\text{ad}}}j(g)$, and the remainder of the existence part of the proof is comprised of finding $\til{g} \in Z_{\text{ad}}$ such that $j(\til{g}) = j_0$. To this end, we identify a sequence $\{g_m\}^{\infty}_{m = 1} \subset Z_{\text{ad}}$ such that $\lim_{m \rightarrow \infty}j(g_m) = j_0$ as $m \rightarrow \infty$. Recalling again \cite[Theorem 2.11]{troltzsch2010optimal} we obtain that some sub-sequence $\{g_{m_k}\}^{\infty}_{k = 1}$ of $\{g_m\}^{\infty}_{m = 1}$ converges weakly in $L^2(\Om;\mathbb{R}^{n})$ to some $\til{g} \in Z_{\text{ad}}$. Moreover, since $|\sqrt{\Ga}(x) g_{m}(x)| \leq  |\sqrt{\Ga}(x)  b(x)|$ for all $m$,  the sequence $\{\sqrt{\Ga} \,g_{m}\}$ is uniformly bounded in $L^2(\Om;\mathbb{R}^{n})$ as well.  From this we may choose the sub-sequence  $\{g_{m_k}\}^{\infty}_{k = 1}$ so that  $\{\sqrt{\Ga}\,g_{m_k}\}^{\infty}_{k = 1}$ converges weakly in $L^2(\Om;\mathbb{R}^{n})$. By a density argument, it is easy to show that the weak limit has to be $\sqrt{\Ga}\,\tilde{g}$. 
Since $S$ is compact and $G$ is lower semi-continuous, we have the inequality chain
\begin{equation}\label{fminEq1}
\begin{aligned}
j(\til{g}) \ &= \ G(S\til{g}) + \frac{\la}{2}\int_{\Om}\Ga(x) |\tilde{g}(x)|^2 dx \\
\ &\leq \ \liminf_{k \rightarrow \infty}G(Sg_{m_k}) + \frac{\la}{2}\int_{\Om}\Ga(x) |\tilde{g}(x)|^2 dx\\
&\leq \ \liminf_{k \rightarrow \infty}G(Sg_{m_k}) + \liminf_{k \rightarrow \infty}\frac{\la}{2} \int_{\Om}\Ga(x) |g_{m_k}|^2 dx \\
&\leq \ \liminf_{k \rightarrow \infty}\left(G(Sg_{m_k}) + \frac{\la}{2}\int_{\Om}\Ga(x) |g_{m_k}|^2 dx\right) \ \leq \ \lim_{k \rightarrow \infty}j(g_{m_k}) \ = \ j_0.
\end{aligned}
\end{equation}
Since $\til{g} \in Z_{\text{ad}}$, it follows that $j(\til{g}) = j_0$, and we have found a minimizer. The proof of uniqueness under the given additional conditions is standard since $j$ will automatically become strictly convex.
\end{proof}

\begin{corollary}[Existence and uniqueness]\label{fullExisteceAndUniqueness}
Problems \ref{nlctsProb}, \ref{nlDiscProb}, \ref{lCtsProb}, and \ref{lDiscProb} are all well-posed. That is, the objective functional has a [unique] minimizing pair, which in turn solves the corresponding state equation.
\end{corollary}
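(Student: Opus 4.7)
The plan is to apply Theorem \ref{abstractWP} to each of the four problems individually. For each problem, I would first cast it as an unconstrained reduced problem over the control. Let $S$ denote the solution operator $g \mapsto u$ corresponding to the state equation of the problem at hand; by Corollary \ref{Well-state}, $S$ is well-defined, and Lax-Milgram together with the appropriate coercivity estimate (Proposition \ref{sharperPoincare} for Problem \ref{nlctsProb}, Korn's inequality for Problem \ref{lCtsProb}, and their discrete counterparts inherited by restriction to $X_{\de,h}$ and $X_h$ for Problems \ref{nlDiscProb} and \ref{lDiscProb}) ensures that $S$ is linear and continuous from $L^2(\Om;\R^n)$ into the respective state space. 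The reduced objective then has the form
\[
 j(g) := G(Sg) + \int_{\Om}\Ga(x)|g(x)|^2 dx,\qquad G(u):=\int_{\Om}F(x,u(x))dx,
\]
which matches the structure in \eqref{bondAbstractCostFunctional} with $\la=2$.

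Next I would verify the hypotheses of Theorem \ref{abstractWP}. The admissible control set $Z_{\text{ad}}$ from \eqref{linearAdSet} is nonempty (it contains $0$), closed, bounded, and convex in $L^2(\Om;\R^n)$. Taking $Y=L^2(\Om;\R^n)$, I must show that $S\colon L^2(\Om;\R^n)\to L^2(\Om;\R^n)$ is compact. For Problem \ref{nlctsProb}, $S$ factors as the continuous solution operator $L^2(\Om;\R^n)\to X_0(\Om_{\de};\R^n)$ followed by the compact embedding $X_0(\Om_{\de};\R^n)\hookrightarrow L^2(\Om;\R^n)$ guaranteed by Proposition \ref{jIProperties} and Theorem \ref{Djcompact}. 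For Problem \ref{lCtsProb}, the analogous argument uses the classical Rellich-Kondrachov embedding $H^1_0(\Om;\R^n)\hookrightarrow L^2(\Om;\R^n)$. For the discrete Problems \ref{nlDiscProb} and \ref{lDiscProb}, the state spaces $X_{\de,h}$ and $X_h$ are finite-dimensional, so the range of $S$ is finite-dimensional and any bounded $S$ is automatically compact.

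I would then check that $G$ is lower semi-continuous on $L^2(\Om;\R^n)$. The growth bound \eqref{Gqcoercive} ensures $G$ is well-defined and finite on $L^2$. Convexity of $G$ follows pointwise from the convexity of $v\mapsto F(x,v)$, while the same growth bound together with the Dominated Convergence Theorem (applied along a.e.\ convergent subsequences) shows that $G$ is continuous on $L^2(\Om;\R^n)$; continuity implies lower semi-continuity. With all hypotheses verified, Theorem \ref{abstractWP} applied with $\la=2>0$, linear $S$, and convex $G$ yields a unique minimizer $\overline{g}$ in each case, and the corresponding unique state $\overline{u}:=S\overline{g}$ completes the minimizing pair, which by construction satisfies the associated state equation.

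The routine portion is verifying that $G$ satisfies the hypotheses of Theorem \ref{abstractWP}, and that $Z_{\text{ad}}$ is nonempty, closed, bounded, and convex. The one genuinely nontrivial ingredient is the compactness of $S$ for the continuous nonlocal problem, which rests on the local compact embedding machinery of Section \ref{properties} (culminating in Proposition \ref{jIProperties} and Theorem \ref{Djcompact}) — but since that work has already been carried out, assembling the four well-posedness statements reduces to the verifications sketched above.
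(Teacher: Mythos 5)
Your proposal is correct and follows the same reduction-to-Theorem~\ref{abstractWP} strategy as the paper's proof: cite Corollary~\ref{Well-state} for the state equations, use the compact embeddings of Section~\ref{properties} (Theorem~\ref{Djcompact} and Proposition~\ref{jIProperties}) and Rellich--Kondrachov for the continuous problems, finite-dimensionality for the discrete ones, and then invoke the abstract existence/uniqueness result. Your writeup is if anything slightly more explicit than the paper's, in that you pin down the choice $Y = L^2(\Om;\R^n)$ and spell out the verification that $G$ is lower semi-continuous and convex, details the paper leaves to the reader.
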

\begin{proof}
The well-posedness of the state equation of each problem follows from the Lax-Milgram lemma as done in Corollary \ref{Well-state}. Notice that in all cases, the solution space $Y$ is compactly embedded into $L^2(\Om;\R^n)$. For the local problems, the embedding $H^1_0(\Om; \R^n)\Subset L^2(\Om; \R^n)$ is standard, while for the non-local problems we invoke Theorem \ref{Djcompact} and Proposition \ref{jIProperties}. We thus have that the solution mapping $S: L^2(\Om;\R^n) \to Y$ is compact, and then 
we may write the reduced cost functionals for our problems abstractly as
\begin{equation}\label{optCondReducedCost}
    j(g) \ := \ \int_{\Om}F(x, Sg(x))dx + \frac{\la}{2}\int_{\Om}\Ga(x) |g(x)|^2 dx.
\end{equation}
Note that this functional satisfies all the conditions of Theorem \ref{abstractWP}, which guarantees  existence and uniqueness of a minimizer. 
\end{proof}

\section{Analysis in vanishing horizon parameter}\label{gaconv}
Having shown that, for every horizon $\de \geq 0$, the nonlocal optimal control problem \ref{nlctsProb} has a unique solution $(\overline{u}_\de, \overline{g}_\de)$, we now study the behavior of the pair as $\de\to 0$. Notice that $\overline{u}_\de$ minimizes 
the potential energy functional
\begin{equation}\label{pot-energy}
W_{\de}(u) \ := \ B_{\de}(u, u) - \int_{\Om} g_\de (x)\cdot u(x)dx
\end{equation}
over $X_0(\Om_{\de}; \R^n)$.  We begin with the following convergence result.   

\begin{lemma}[Compactness of solutions of the control problem]\label{convergenceStateControl}
Let $\{(\overline{u_{\de}}, \overline{g_{\de}})\}_{\de > 0}$ be the family of optimal state-control pairs solving  \ref{nlctsProb}. There exists a $(\overline{u}, \overline{g}) \in H^1_0(\Om; \R^n) \times Z_{\text{ad}}$ such that, up to a sub-sequence, $\overline{g}_{\de} \rightharpoonup \overline{g}$ in $L^2(\Om; \R^n)$ and $\overline{u_{\de}} \rightarrow \overline{u}$ strongly in $L^2(\Om; \R^n)$ as $\de \to 0^+$. 
\end{lemma}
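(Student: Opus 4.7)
The plan is to combine (a) boundedness of the admissible controls in $L^2$, (b) the \emph{a priori} energy bound furnished by the state equation together with the nonlocal Poincaré inequality (Proposition~\ref{sharperPoincare}), and (c) a Bourgain--Brezis--Mironescu / Mengesha--Du-type compactness theorem for the peridynamic projected-difference seminorm as $\de\to 0^+$.

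First I would handle the controls. Since $Z_{\text{ad}}\subset L^2(\Om;\R^n)$ is bounded, the family $\{\overline{g_{\de}}\}$ is bounded in $L^2$, and since $Z_{\text{ad}}$ is closed and convex, hence weakly sequentially closed, a subsequence (still denoted the same way) satisfies $\overline{g_{\de}}\rightharpoonup \overline{g}\in Z_{\text{ad}}$.

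Next, testing the state equation \eqref{stateSystem} with $v=\overline{u_{\de}}$, and invoking the coercivity $B_{\de}(u,u)\geq \tfrac{h_{\min}}{2}[u]_{X(\Om_{\de};\R^n)}^2$, the Cauchy--Schwarz inequality, and Proposition~\ref{sharperPoincare}, one obtains, for $\de\in(0,\de_0]$, the $\de$-uniform estimate
\[
[\overline{u_{\de}}]_{X(\Om_{\de};\R^n)}^2 + \|\overline{u_{\de}}\|_{L^2(\Om;\R^n)}^2 \lesssim \|\overline{g_{\de}}\|_{L^2(\Om;\R^n)}^2 \lesssim 1 .
\]
Extending $\overline{u_{\de}}$ by zero to $\R^n$ and applying \eqref{ourExtensionIneq}, the family is also bounded in $X(\R^n;\R^n)$ uniformly in $\de$.

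The key step is to upgrade this to strong $L^2(\Om;\R^n)$-compactness and to identify the limit inside $H^1_0(\Om;\R^n)$. Here I would invoke a vector-valued BBM/Mengesha--Du-type theorem (cf.~\cite{bourgain2001another, mengesha2014bond}): under the kernel assumptions \eqref{kernelNormalization}, every sequence $\{u_{\de}\}\subset X_0(\Om_{\de};\R^n)$ with a uniform bound on $[u_{\de}]_{X(\Om_{\de};\R^n)}$ admits a subsequence converging strongly in $L^2(\Om;\R^n)$ to some $u\in H^1_0(\Om;\R^n)$. The zero boundary condition passes to the limit because $\overline{u_{\de}}\equiv 0$ outside $\Om_{\de}$, so for any fixed $\ep>0$ the zero-extensions eventually vanish on $\R^n\setminus \Om_{\ep}$; hence the strong $L^2$-limit vanishes on $\R^n\setminus \overline{\Om}$, and by the Lipschitz regularity of $\partial\Om$ this forces $\overline{u}\in H^1_0(\Om;\R^n)$.

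The main obstacle is precisely this final step: producing $L^2$-compactness that is uniform in $\de$ for a seminorm involving only the \emph{projected} difference quotient $Du/|x-y|$ rather than the full difference quotient. Theorem~\ref{Djcompact} provides the local compact embedding of $X(\R^n;\R^n)$ into $L^2(\R^n;\R^n)$ for a fixed $\de$, but the uniformity in $\de$---together with the Korn-type rigidity needed to land in $H^1_0(\Om;\R^n)$ rather than in a larger space---is the delicate input, and is supplied by \cite{mengesha2014bond}.
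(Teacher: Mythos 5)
Your proof is correct and follows essentially the same route as the paper's: derive a $\de$-uniform bound on $[\overline{u_{\de}}]_{X(\Om_{\de};\R^n)}$ from the state equation, boundedness of $Z_{\text{ad}}$, and the $\de$-uniform nonlocal Poincaré inequality; extract a weak $L^2$-limit of the controls from weak sequential compactness of the closed, convex, bounded set $Z_{\text{ad}}$; and then invoke a Mengesha--Du-type compactness theorem for the projected-difference seminorm to get strong $L^2$-convergence of the states to a limit in $H^1_0(\Om;\R^n)$. The paper obtains the uniform state bound via $W_\de(\overline{u_\de})\le W_\de(0)=0$ rather than by testing \eqref{stateSystem} directly, and cites \cite[Proposition 4.2]{mengesha2015variational} or \cite[Theorem 2.5]{mengesha2012nonlocal} for the compactness/Korn-type step in place of your \cite{mengesha2014bond}, but these are cosmetic differences.
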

\begin{proof} 
Theorem \ref{abstractWP} gives existence and uniqueness of optimal pairs that minimize the energy $W_\de$ defined in  \eqref{pot-energy}. 
Moreover, since $0$ is an admissible control, we have that 
 $W_{\de}(\overline{u_{\de}}) \ \leq \ 0, $ 
and so, after rearranging we get 
\begin{equation}\label{stateControlEq2}
B_{\de}(\overline{u_{\de}}, \overline{u_{\de}}) \ \leq \ \int_{\Om}\overline{g_\de}(x) \cdot \overline{u_{\de}}(x)dx.
\end{equation}
The Cauchy-Schwarz Inequality, in conjunction with the nonlocal Poincaré inequality \eqref{sharperPoincareIneq} and the Triangle inequality, gives us
\begin{equation}\label{stateControlEq3}
    [\overline{u_{\de}}]^2_{X(\Om_{\de}; \R^n)} \ \lesssim \ \|\overline{g_\de}\|_{L^2(\Om; \R^n)}\|\overline{u_{\de}}\|_{L^2(\Om; \R^n)} \lesssim \|\overline{g_\de}\|_{L^2(\Om; \R^n)}[\overline{u_{\de}}]_{X(\Om_{\de}; \R^n)}.
\end{equation}
Notice that the constant in this estimate, owing to \eqref{sharperPoincareIneq}, is independent of $\de$. Furthermore, since $\{\overline{g_{\de}}\}_{\de > 0} \subset Z_{\text{ad}}$, it is norm bounded (and therefore has a weak limit, up to a sub-sequence), and as a consequence  
\begin{equation}\label{stateControlEq4}
    \sup_{\de>0}[\overline{u_{\de}}]_{X(\Om_{\de}; \R^n)} \ \leq \ C.
\end{equation}
Now since $\overline{u}_\de \in X_0(\Om_{\de}; \R^n)$, after extending by zero to $\Om_1$ (with $\de=1$) we have that 
\[
\sup_{\de > 0}\int_{\Om_1}\int_{\Om_1} k_{\de}(x-y){|Du_{\de}(x,y)|^2 \over |x-y|^{2}} dydx  = \sup_{\de > 0}[\overline{u_{\de}}]_{X(\Om_{\de}; \R^n)}^2 \ \leq \ C.
\]
From this, we may use \cite[Proposition 4.2]{mengesha2015variational} or \cite[Theorem 2.5]{mengesha2012nonlocal} to conclude that the  $\{\overline{u_{\de}}\}_{\de > 0}$ is precompact in $L^2(\Om;\R^n)$ and converges strongly in $L^2(\Om; \R^n)$ to some $\overline{u} \in H^1_0(\Om; \R^n)$ (up to a sub-sequence). 
\end{proof}

The main question we would like to address in the remaining is whether the limiting pair $(\overline{u}, \overline{g})$ solves a corresponding  limiting optimal problem. 
The limiting behavior of the minimizers is closely related to the variational convergence of the above parametrized energy functionals.  
The main tool we shall use is $\Ga$-convergence (see \cite{braides2002gamma, buttazzo1982gamma, dal2012introduction, Rin} for more on properties of $\Ga$-convergence; \cite{bellido2015hyperelasticity, Bon, mengesha2015variational, mengesha2016characterization, Rin} for examples of proofs of $\Ga$-convergence for other peridynamics models. 
For convenience, we recall its definition here.

\begin{definition}[$\Ga$-convergence]\label{GaConvDef}
We say that the sequence $E_{\de}: L^2(\Om; \R^n) \rightarrow \R \cup \{+\infty\}$ \textbf{$\Ga$-converges} strongly in $L^2(\Om; \R^n)$ to $E_0: L^2(\Om; \R^n) \rightarrow \R \cup \{+\infty\}$ (denoted $E_{\de} \xrightarrow{\Ga} E_0$) if the following properties hold:
\begin{enumerate}[label=\textbf{GC\arabic*}]
  \item\label{GC1} \textbf{The liminf property:} Assume $u_{\de} \rightarrow u$ strongly in $L^2(\Om; \R^n)$. Then we have the Fatou-type inequality
    \begin{equation}\label{FatouOfEDelta}
        E_0(u) \ \leq \ \liminf_{\de \rightarrow 0^+}E_{\de}(u_{\de}).
    \end{equation}

    \item\label{GC2} \textbf{Recovery sequence property:} For each $u \in L^2(\Om; \R^n)$, there exists a sequence $\{u_{\de}\}_{\de > 0}$ where $u_{\de} \rightarrow u$ strongly in $L^2(\Om; \R^n)$ and
    \begin{equation}\label{RecoverySeqProperty}
        \limsup_{\de \rightarrow 0^+}E_{\de}(u_{\de}) \ \leq \ E_0(u).
    \end{equation}
\end{enumerate}
\end{definition}

\subsection{Vanishing horizon parameter for continuous problem}\label{gaconvConts}
We will be working on the extended linear peridynamic energy functional we now define. 
Let $E_{\de}: L^2(\Om_{\de}; \R^n) \rightarrow [0, \infty]$ denote the energy
\begin{equation}\label{energyEdeReprise}
    E_{\de}(u) \ := \ \iint_{\mathcal{D}_{\de}}H(x, y)k_{\de}(x - y)\frac{|Du(x, y)|^2}{|x - y|^2}dxdy,\quad \text{for $u\in X(\Om_{\de}; \R^n)$}
\end{equation}
and $+\infty$ otherwise. Similarly, define a limiting energy $E_0: L^2(\Om; \R^n) \rightarrow [0, \infty]$ by 
\begin{equation}\label{energyE0}
    E_0(u) \ := \ \frac{1}{n(n + 2)}\int_{\Om}H(x,x)(2\|\Sym(\grad u(x))\|^2_F + \dive(u(x))^2)dx \quad \text{for $u\in H^1(\Om; \R^n)$,}
\end{equation}
and $+\infty$ otherwise.  
Note that since, for all $\de > 0$, our energy $E_\de$ is quadratic we have
 \begin{equation}\label{EnergyDeDiffSqEq}
    |E_{\de}(u) - E_{\de}(v)| \ \leq \ E_{\de}(u + v)^{\frac{1}{2}}E_{\de}(u - v)^{\frac{1}{2}}
\end{equation}
for all $u, v \in X(\Om_{\de}; \R^n)$.
\begin{lemma}[Nonlocal to local] \label{L-to-NL-for-smooth}
Suppose that $A\Subset\Om$ and $w\in C^{2}(A,\R^n)$. Then for any $h\in L^\infty(\Om)$, we have that  
\[
\begin{split}
    \lim_{\de \rightarrow 0^+}\int_A\int_{\Om}&h(x)k_{\de}(x - y)\frac{|Dw(x, y)|^2}{|x - y|^2}dydx \ \\
    &= 
    \frac{1}{n(n + 2)}\int_A h(x)(2\|\Sym(\grad w(x))\|^2_F + \dive(w(x))^2)dx.
    \end{split}
    \]
\end{lemma}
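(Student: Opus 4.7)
The plan is to reduce the assertion to a pointwise computation by Taylor expanding $w$, and then evaluate the resulting leading term by a classical fourth-moment identity on the unit sphere. Since $A \Subset \Om$, set $R := \dist(\overline{A}, \pa\Om) > 0$; for $0 < \de < R$ and $x \in A$ the support of $k_\de$ confines the $y$-integral to $B_\de(x) \subset \Om$, so the substitution $\xi = y - x$ yields
\[
   I_\de(x) \ := \ \int_\Om k_\de(x-y)\frac{|Dw(x,y)|^2}{|x-y|^2}\,dy \ = \ \int_{\R^n} k_\de(|\xi|)\frac{|Dw(x, x+\xi)|^2}{|\xi|^2}\,d\xi.
\]
It therefore suffices to show $I_\de(x) \to \frac{1}{n(n+2)}\bigl(2\|\Sym(\grad w(x))\|_F^2 + \dive(w(x))^2\bigr)$ with a uniform bound on $A$; since $h \in L^\infty(\Om)$ and $|A| < \infty$, dominated convergence then delivers the outer limit.

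For the pointwise step, because $w$ is $C^2$ on a neighborhood of $\overline{A}$, there is $M > 0$ such that
\[
   w(x+\xi) - w(x) = \grad w(x)\xi + r(x,\xi), \qquad |r(x,\xi)| \leq M|\xi|^2,
\]
for every $x \in A$ and $|\xi| < R$. Since $Dw(x, x+\xi) = (w(x+\xi) - w(x)) \cdot \xi/|\xi|$, squaring and expanding gives
\[
   \frac{|Dw(x, x+\xi)|^2}{|\xi|^2} = \frac{(\grad w(x)\xi \cdot \xi)^2}{|\xi|^4} + O(|\xi|),
\]
uniformly in $x \in A$. Integrated against $k_\de$, the error term is controlled by $\int_{\R^n}k_\de(|\xi|)|\xi|\,d\xi \leq \de \to 0$ via Assumption \ref{kernelAssump}.

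The leading term is evaluated in spherical coordinates. Since $(\grad w(x)\xi\cdot\xi)^2/|\xi|^4$ is zero-homogeneous in $\xi$, writing $\xi = r\om$ with $\om \in S^{n-1}$ gives
\[
   \int_{\R^n} k_\de(|\xi|)\frac{(\grad w(x)\xi\cdot\xi)^2}{|\xi|^4}\,d\xi = \left(\int_0^\infty k_\de(r)r^{n-1}\,dr\right) \int_{S^{n-1}}(\grad w(x)\om\cdot\om)^2\, d\sigma(\om).
\]
The radial factor equals $1/|S^{n-1}|$ by the normalization of $k_\de$. For the angular factor, expand $(\grad w(x)\om \cdot \om)^2 = \sum \pa_j w_i\, \pa_l w_k\, \om_i\om_j\om_k\om_l$ and apply the classical identity
\[
   \int_{S^{n-1}} \om_i\om_j\om_k\om_l\, d\sigma(\om) = \frac{|S^{n-1}|}{n(n+2)}\bigl(\delta_{ij}\delta_{kl} + \delta_{ik}\delta_{jl} + \delta_{il}\delta_{jk}\bigr).
\]
The three contractions yield $\dive(w)^2$, $\|\grad w\|_F^2$, and $\sum \pa_j w_i\, \pa_i w_j$, and the last two regroup into $2\|\Sym(\grad w)\|_F^2$.

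Combining these pieces produces $I_\de(x) \to \frac{1}{n(n+2)}(2\|\Sym(\grad w(x))\|_F^2 + \dive(w(x))^2)$ uniformly on $A$, whence the stated identity. The only real bookkeeping is the regrouping of $\|\grad w\|_F^2 + \sum \pa_j w_i\, \pa_i w_j$ into $2\|\Sym(\grad w)\|_F^2$; everything else (Taylor expansion, radial--angular decomposition, and the sphere moment identity) is classical.
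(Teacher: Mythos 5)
The paper offers no proof of this lemma; it simply cites \cite{du2013analysis,mengesha2014bond,mengesha2015variational}. Your argument is complete and correct, and it is the same Taylor-expansion-plus-fourth-moment argument used in those references: the inner integral localizes to $B_\de(x)\subset\Om$ once $\de<\dist(\overline A,\pa\Om)$; expanding $w$ to second order produces the zero-homogeneous leading term $(\grad w(x)\xi\cdot\xi)^2/|\xi|^4$ with an $O(|\xi|)$ remainder whose $k_\de$-integral is $O(\de)$ thanks to the support and normalization in Assumption~\ref{kernelAssump}; the radial--angular split and the identity $\int_{S^{n-1}}\om_i\om_j\om_k\om_l\,d\sigma=\frac{|S^{n-1}|}{n(n+2)}(\de_{ij}\de_{kl}+\de_{ik}\de_{jl}+\de_{il}\de_{jk})$ give exactly $\dive(w)^2+\|\grad w\|_F^2+\sum_{i,j}\pa_j w_i\,\pa_i w_j=\dive(w)^2+2\|\Sym(\grad w)\|_F^2$; and uniform convergence on $A$ together with $h\in L^\infty$ and $|A|<\infty$ carries the limit through the outer integral. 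One small reading note: you tacitly promote the hypothesis $w\in C^2(A,\R^n)$ to $C^2$ on a neighborhood of $\overline A$ so that the Taylor remainder bound is uniform for $x\in A$, $|\xi|<R$; this is the reading the paper actually needs (in the application $w_{\ep,\de}\in C^2(\overline A;\R^n)$ is built by mollification on the larger domain), so it is not a gap, just worth stating explicitly.
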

The proof of this can be found in \cite{du2013analysis,mengesha2014bond,mengesha2015variational} in some form or another. 

We now state the result on the variational convergence of the parameterized energies $E_{\de}$. 

\begin{theorem}[$E_{\de} \xrightarrow{\Ga} E_0$]
\label{GaConvEde}
Let $E_\de$ and $E_0$ be defined in \eqref{energyEdeReprise} and \eqref{energyE0}, respectively. We have $E_{\de} \xrightarrow{\Ga} E_0$ in the sense of in Definition \ref{GaConvDef}. 
\end{theorem}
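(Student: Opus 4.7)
The plan is to verify the two conditions in Definition \ref{GaConvDef} separately; the recovery sequence is handled first since it is essentially a density argument built on top of Lemma \ref{L-to-NL-for-smooth}, whereas the liminf inequality requires a localization argument on top of the compactness results already used in Lemma \ref{convergenceStateControl}.

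For the recovery sequence (GC2), suppose $u \in H^1_0(\Om; \R^n)$ (otherwise $E_0(u) = +\infty$ and nothing is required). Since $C^\infty_c(\Om; \R^n)$ is dense in $H^1_0(\Om; \R^n)$, pick $u^k \in C^\infty_c(\Om; \R^n)$ with $u^k \to u$ in $H^1_0(\Om; \R^n)$. For each fixed $k$, once $\de < \dist(\supp u^k, \pa\Om)$ the set $A := \supp u^k$ is relatively compact in $\Om$, and Lemma \ref{L-to-NL-for-smooth} together with the bound $H \leq h_{max}$ yields $\lim_{\de \to 0^+} E_\de(u^k) = E_0(u^k)$. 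Combining the quadratic identity \eqref{EnergyDeDiffSqEq} with the $\de$-uniform embedding Lemma \ref{ctsXW} gives
\[
  \bigl| E_\de(u^k)^{1/2} - E_\de(u)^{1/2} \bigr| \leq E_\de(u - u^k)^{1/2} \lesssim \|u - u^k\|_{H^1(\Om;\R^n)},
\]
with constant independent of $\de$. A diagonal extraction $k = k(\de) \to \infty$ slowly as $\de \to 0^+$ then produces $u_\de := u^{k(\de)}$ that converges to $u$ in $L^2(\Om;\R^n)$ and satisfies $\limsup_{\de \to 0^+} E_\de(u_\de) \leq \lim_{k \to \infty} E_0(u^k) = E_0(u)$, the last equality following from continuity of $E_0$ along $H^1$-convergent sequences.

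For the liminf inequality (GC1), assume $u_\de \to u$ in $L^2(\Om;\R^n)$ and that $L := \liminf_{\de \to 0^+} E_\de(u_\de) < \infty$ (otherwise the inequality is trivial); pass to a subsequence along which the liminf is achieved as a limit. The uniform upper bound on $[u_\de]_{X(\Om_{\de};\R^n)}$ together with the compactness results of \cite{mengesha2015variational, mengesha2012nonlocal} already invoked in Lemma \ref{convergenceStateControl} forces $u \in H^1_0(\Om;\R^n)$ and, in the sense of those references, weak convergence of the associated symmetrized nonlocal gradients to $\Sym(\grad u)$. To extract $E_0(u) \leq L$, I would localize: cover $\Om$ up to a null set by small balls $B_r(x_0)$ centered at Lebesgue points of $h$ on which $H(x,y) = \tfrac{1}{2}(h(x)+h(y))$ differs from $h(x_0)$ by arbitrarily small $L^1$-error, freeze $H$ to this value, and apply the constant-coefficient lower bound obtained via the spherical-averaging identity that underlies Lemma \ref{L-to-NL-for-smooth}. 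Weak lower semicontinuity of the resulting local quadratic form in $\Sym(\grad u)$ and $\dive u$ on each ball, together with a covering/partition-of-unity argument and the arbitrariness of $r$, delivers $E_0(u) \leq L$.

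The principal obstacle is (GC1): the bilinear form $B_\de$ sees only the scalar projected difference $Du$ along each bond, while $E_0$ involves two distinct local quantities, the full symmetric gradient $\Sym(\grad u)$ and the divergence $\dive u$, with the precise coefficients $\tfrac{2}{n(n+2)}$ and $\tfrac{1}{n(n+2)}$. Recovering these coefficients in the limit demands the angular-averaging identity, which is transparent on smooth functions but must survive the passage from $L^2$-convergence of $u_\de$ to $H^1$-convergence of the limit. A secondary technical issue is the spatial variability of $H(x,y)$, which is handled via the Lebesgue-point localization indicated above; both ingredients are present in the framework of \cite{mengesha2015variational, mengesha2014bond} and can be invoked or mildly adapted to the present setting.
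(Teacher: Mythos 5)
Your proof of \textbf{GC2} is essentially sound and uses the same ingredients as the paper — Lemma~\ref{L-to-NL-for-smooth} on smooth functions, the quadratic identity \eqref{EnergyDeDiffSqEq}, and the $\de$-uniform embedding Lemma~\ref{ctsXW} — but via a diagonal extraction $k = k(\de)$ rather than the paper's device of a \emph{constant} recovery sequence $u_\de \equiv \tilde u$ combined with a uniform-in-$\de$ interchange of limits. Both routes work, and for the application to Problem~\ref{nlctsProb} the restriction to $u\in H^1_0$ is harmless; still, note that the paper defines $E_0(u)$ to be finite for all $u\in H^1(\Om;\R^n)$, not just $H^1_0(\Om;\R^n)$, so your opening remark ``otherwise $E_0(u)=+\infty$'' is not literally correct. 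The paper covers the general $H^1$ case by extending $u$ via a Sobolev extension to $\Om_1$ and approximating by $C^2(\Om_1)$ functions (not compactly supported in $\Om$); your $C^\infty_c(\Om;\R^n)$ approximation only reaches $H^1_0$.

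The genuine gap is in \textbf{GC1}. Your sketch asserts ``weak convergence of the associated symmetrized nonlocal gradients to $\Sym(\grad u)$'' and then proposes to conclude by ``weak lower semicontinuity of the resulting local quadratic form in $\Sym(\grad u)$ and $\dive u$.'' This does not close: the functions $u_\de$ are not in $H^1(\Om;\R^n)$ (they lie only in $X_0(\Om_\de;\R^n)$), so there is no weak $H^1$ convergence of $u_\de$ to which $H^1$-lower-semicontinuity of $v\mapsto\int(2\|\Sym(\grad v)\|_F^2+(\dive v)^2)$ can be applied. If instead you intend lower semicontinuity in the weak limit of a \emph{nonlocal} strain measure, you still must produce the quadratic-form-versus-scalar-projected-difference comparison, and that is precisely the missing step: the constant-coefficient lower bound \eqref{liminfhIndicator} is not obtained from the compactness results you cite; it must be \emph{proved}. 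The paper does this by mollifying, $w_{\ep,\de}:=\eta_\ep*u_\de$, using Jensen's inequality to bound $\int_A\int_\Om k_\de\,|Dw_{\ep,\de}|^2/|x-y|^2$ above by the nonlocal energy of $u_\de$, then applying Lemma~\ref{L-to-NL-for-smooth} to the now-smooth $w_{\ep,\de}$ with $\ep$ fixed, and only afterwards sending $\ep\to 0^+$. That mollification step is what bridges non-smooth $u_\de$ to the local quadratic form, and it has no counterpart in your sketch. Your Lebesgue-point freezing of $H$ is a reasonable alternative to the paper's monotone step-function approximation for handling the variable coefficient (the paper writes $h$ as an increasing sequence of simple functions, uses superadditivity of $\liminf$ across disjoint sets $D_i\cap A$, and finishes with the Monotone Convergence Theorem), but it addresses only the coefficient, not the core difficulty of identifying the limit energy, which remains unresolved in your proposal.
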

\begin{proof} 
We verify each of the conditions that comprise this definition.

\textbf{Proof of \ref{GC1}:} Let $u \in L^2(\Om;\R^n)$ be arbitrary, and $\{u_{\de}\}_{\de > 0} \subset L^2(\Om; \R^n)$ be such that $u_{\de} \rightarrow u$ strongly in $L^2(\Om; \R^n)$; we may assume without loss of generality that $\liminf_{\de \rightarrow 0^+}E_{\de}(u_{\de}) <  \infty$. That is, up to a sub-sequence we may assume that $E_{\de}(u_{\de}) <  \infty$ 
and using the positive lower bound on the coefficient $H$ we have that 
\begin{equation}\label{liminfIneqProofEq2}
\sup_{\de > 0}\int_{\Om}\int_{\Om}\frac{k_{\de}(x - y)}{|x - y|^2}|Du_{\de}(x, y)|^2dxdy \ < \ \infty.
\end{equation}
Arguing in the same way as in the proof of \cite[Theorem 2.5]{mengesha2012nonlocal}, we then have $u \in H^1(\Om; \R^n)$, and that $u_{\de} \rightarrow u$ strongly in $L^2(\Om; \R^n)$.  

From here we will look to find a variant of \cite[Equation 37]{mengesha2015variational}, largely repeating the lower semi-continuity part of the proof of \cite[Theorem 4.4]{mengesha2016characterization}. We first assume that $h$ is the constant function $h=1$ and prove that for any $A\Subset \Om$ open, we have the inequality
\begin{equation}\label{liminfhIndicator}
    \frac{1}{n(n + 2)}\int_A 2\|\Sym(\nabla u(x))\|^2_F + \dive(u(x))^2dx \leq \liminf_{\de \rightarrow 0^+}\int_A\int_{\Om_{\de}}\frac{k_{\de}(x - y)}{|x - y|^2}|Du_{\de}(x, y)|^2dxdy.
\end{equation}
Let $0 < \ep < \dist(A, \pa \Om)$, and let $\eta \in C^{\infty}_0(B(0, 1))$ be a smooth cutoff function. Define $\eta_{\ep}(z) := \ep^{-n}\eta\left(\frac{z}{\ep}\right)$, and define $w_{\ep, \de} := \eta_{\ep} * u_{\de}$ on $\overline{A}$, which is in $C^{2}(\bar{A};\R^n)$. 
Via a direct calculation coupled with application of Jensen's inequality, we have 
\begin{equation}\label{liminfIneqProofEq9}
    \int_{A}\int_{\Om}k_{\de}(x - y)\frac{|Dw_{\ep, \de}(x, y)|^2}{|x - y|^2}dxdy \ \leq \ \int_{\Om}\int_{\Om_{\de}}k_{\de}(x - y)\frac{|Du_{\de}(x, y)|^2}{|x - y|^2}dxdy.
\end{equation}  
Our next step will be to send $\de \rightarrow 0^+$, leaving $\ep > 0$ fixed for now. The right hand side of \eqref{liminfIneqProofEq9} is bounded by $\liminf_{\de \rightarrow 0^+}E_{\de}(u_{\de})$ (with $h=1$). We compute the limit of the left hand  side. Set $w_{\ep} := \eta_{\ep} * u$. Then we observe that $w_{\ep, \de} \rightarrow w_{\ep}$ as $\de \rightarrow 0^+$ in $C^1(\overline{A}; \R^n)$ due to $u_{\de} \rightarrow u$ in $L^2(\Om; \R^n)$ (where $\ep > 0$ is taken to be fixed for now). 
We  use this and Lemma \ref{L-to-NL-for-smooth} to obtain that  
\begin{equation}\label{liminfIneqProofEq29}
    \frac{1}{n(n + 2)}\int_A(2\|\Sym(\grad w_{\ep})\|^2_F + \dive(w_{\ep}(x))^2dx=\lim_{\de \rightarrow 0^+}\int_A\int_{\Om}k_{\de}(x - y)\frac{|Dw_{\ep, \de}(x, y)|^2}{|x - y|^2}dxdy. \ 
\end{equation}
The desired inequality \eqref{liminfhIndicator} now follows from taking the limit in $\epsilon$ in \eqref{liminfIneqProofEq29} and combining it with \eqref{liminfIneqProofEq9}.

Next we assume that $h$ is a simple function $h(x)=  \sum_{i=1}^{m} h_i \chi_{D_i}(x)$ for $D_i\subset \Om_{\de}$. Then applying \ref{liminfhIndicator} for each $i=1, \dots, m$ over $D_i\cap A$ and summing it over $i \in \{1, 2, \dots, m\}$ we have 
\[
\begin{split}
  \frac{1}{n(n + 2)}\int_A  h(x) & \, \left( 2\|\Sym(\nabla u(x))\|^2_F + \dive(u(x))^2 \right) dx\\
  & \leq \sum_{i = 1}^{m}\liminf_{\de \rightarrow 0^+}\int_{D_i\cap A}h_i\int_{\Om_{\de}}\frac{k_{\de}(x - y)}{|x - y|^2}|Du_{\de}(x, y)|^2dxdy\\
  &\leq \liminf_{\de \to 0^+}\int_{ A}h(x)\int_{\Om_{\de}}\frac{k_{\de}(x - y)}{|x - y|^2}|Du_{\de}(x, y)|^2dxdy,
  \end{split}
\]
where we use the sub-additivity $\liminf a_\tau + \liminf b_\tau \leq \liminf(a_\tau  + b_\tau)$.  
Finally, the case of general positive $h \in L^{\infty}(\Om)$, we select an increasing sequence of step functions ${s_j(x)}$, $0\leq s_j \leq s_j+1 \leq h(x)$ that converges to $h$ uniformly.  
The result then follows from direct application of the Monotone Convergence Theorem. 

\textbf{Proof of \ref{GC2}:} Let $u\in L^2(\Om;\R^n)$. We may assume that $u \in H^1(\Om; \R^n)$. For the recovery sequence, we take $u_{\de} := \tilde{u}\in H^{1}(\R^n, \R^{n})$, which is the extension of $u$ to $\R^{n}$ with compact support say in $\Om_1$ (with $\de=1$).   
Take a sequence $\{v_j\}^{\infty}_{j = 1} \subset C^{2}(\Om_1)$ such that  $v_j \to \tilde{u}$ n $H^{1}(\Om_1)$ as $j\to \infty.$ then using \eqref{EnergyDeDiffSqEq}, we see that for a $C>0$
\[
\sup_{\de>0} |E_{\de}(\tilde{u}) - E_\de(v_j)| \leq C \|\nabla u - \nabla v_j\|_{L^2(\Om; \R^{n \times n})}\|\nabla u\|_{L^2(\Om; \R^{n \times n})}. 
\]
That means, $E_\de(v_j) \to E_{\de}(\tilde{u})$ as $j \to \infty$, uniformly in $\de$.  Using the same proof as Lemma \ref{L-to-NL-for-smooth}, we see that for each $j=1, 2, \dots$,
\[
\lim_{\de\to 0} E_{\de} (v_j) \ = \ B_{0}(v_j, v_j).  
\]
Taking the limit in $j$ now we have that 
\[
\lim_{\de \to 0} E_{\de}(\tilde{u}) = \lim_{\de \to 0} \lim_{j\to \infty}E_{\de}(v_j) =  \lim_{j\to \infty}\lim_{\de \to 0} E_{\de}(v_j) = \lim_{j\to \infty}B_{0}(v_j, v_j) = B_{0}(u, u) = E_0(u),
\]
where in the second equality we used the uniform convergence in $\de$. 
\end{proof}

\begin{remark}
We may follow the above approach as well as \cite[Remark 1.7]{braides2002gamma} to conclude that the family of energies $\{W_\de\}_{\de>0}$, defined in \eqref{pot-energy} (finite on $X_0(\Om;\R^n)$), also $\Ga$-converges in the $L^2$-topology to 
\[
W_{0}(u) = B_0(u, u)-\int_{\Om} \overline{g}(x) \cdot u(x) dx
\] 
where $g_\de \rightharpoonup \overline{g}$ weakly in $L^2(\Om; \R^n)$ as $\de \to 0^+$. 
With $\Ga$-convergence at hand, we recall that \cite[Corollary 7.20]{dal2012introduction} states if $\{u_{\de}\}_{\de > 0}$ is a family of minimizers for $\{W_{\de}\}_{\de > 0}$ over $L^2(\Om; \R^n)$, and ${\overline{u}}$ is a limit point of this family, then $\overline{u}$ is a minimizer of $W_0$ on $L^2(\Om; \R^n)$ (see also \cite[Theorem 2.1]{buttazzo1982gamma}). By our previous results, this implies $\overline{u} \in H^1_0(\Om; \R^n)$, and moreover
\begin{equation}\label{convergenceOfMinimizers}
    W_0(\overline{u}) \ = \ \lim_{\de \rightarrow 0^+}W_{\de}(u_{\de}).
\end{equation}
\end{remark}
Finally, we identify what conditions to impose to identify the solution to the local optimal control problem via a limiting process.

\begin{theorem}[Convergence]\label{convOfSolutions}
Suppose $\{(\overline{u_{\de}}, \overline{g_{\de}})\}_{\de > 0} \in \mathcal{A}^\de$ is the family of solutions to the nonlocal continuous Problem \ref{nlctsProb}. Then, there exists $(\overline{u}, \overline{g}) \in \mathcal{A}^{loc}$ such that, up to a non-relabeled sub-sequence, $\overline{u_{\de}} \rightarrow \overline{u}$ in $L^2(\Om; \R^n)$ and $\overline{g_{\de}} \rightharpoonup \overline{g}$ in $L^2(\Om; \R^n)$. Moreover, $(\overline{u}, \overline{g})$ solves the local optimal control Problem \ref{lCtsProb}.
\end{theorem}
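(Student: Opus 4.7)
The plan combines the compactness provided by Lemma~\ref{convergenceStateControl} with the $\Gamma$-convergence of the potential energies stated in Theorem~\ref{GaConvEde} and the remark following it, in order to pass to the limit both in the state constraint and in the objective. First, Lemma~\ref{convergenceStateControl} extracts a (non-relabeled) subsequence along which $\overline{u_\de}\to\overline{u}$ strongly in $L^2(\Om;\R^n)$ with $\overline{u}\in H^1_0(\Om;\R^n)$, and $\overline{g_\de}\rightharpoonup\overline{g}$ weakly in $L^2(\Om;\R^n)$; since $Z_{\text{ad}}$ is closed and convex, hence weakly closed, one has $\overline{g}\in Z_{\text{ad}}$.

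Next, to verify $(\overline{u},\overline{g})\in\mathcal{A}^{\loc}$, I appeal to the remark following Theorem~\ref{GaConvEde}: since $\overline{g_\de}\rightharpoonup\overline{g}$ weakly in $L^2(\Om;\R^n)$, the family of potential energies $W_\de$ $\Gamma$-converges in the $L^2$-topology to $W_0$, and the fundamental property \cite[Corollary 7.20]{dal2012introduction} quoted there identifies the $L^2$-limit $\overline{u}$ of the nonlocal minimizers $\overline{u_\de}$ with the unique minimizer of $W_0$ over $H^1_0(\Om;\R^n)$. The Euler--Lagrange equation for this minimizer is precisely the local state equation \eqref{locProb} with control $\overline{g}$, so $(\overline{u},\overline{g})\in\mathcal{A}^{\loc}$.

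Finally, to establish optimality, fix an arbitrary $(w,f)\in\mathcal{A}^{\loc}$. I construct a comparison pair $(w_\de,f)$ by keeping the control $f$ fixed and taking $w_\de\in X_0(\Om_\de;\R^n)$ as the unique solution of $B_\de(w_\de,v)=\langle f,v\rangle$ for all admissible $v$. The uniform Poincaré inequality (Proposition~\ref{sharperPoincare}) and the compactness provided by Proposition~\ref{jIProperties} combined with \cite[Proposition 4.2]{mengesha2015variational} yield $w_\de\to w^*$ strongly in $L^2$ with $w^*\in H^1_0(\Om;\R^n)$ along a subsequence, and the same $\Gamma$-convergence argument used in the previous step identifies $w^*$ as the unique local state with source $f$, so $w^*=w$ and the whole sequence converges. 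The optimality of $(\overline{u_\de},\overline{g_\de})$ gives $I(\overline{u_\de},\overline{g_\de})\leq I(w_\de,f)$. To pass this inequality to the limit I use: on the left, Fatou applied to the non-negative integrand $F(x,\overline{u_\de})+c_1|\overline{u_\de}|^2+l(x)$ (from the growth bound \eqref{Gqcoercive}) combined with strong $L^2$ convergence of $\overline{u_\de}$ to yield $\int_\Om F(x,\overline{u})\,dx\leq\liminf\int_\Om F(x,\overline{u_\de})\,dx$, and weak $L^2$ lower semicontinuity to handle the quadratic cost term; on the right, strong $L^2$ convergence $w_\de\to w$ paired with dominated convergence (the same growth bound supplies a convergent majorant $c_1|w_\de|^2+l$) to give $\int_\Om F(x,w_\de)\,dx\to\int_\Om F(x,w)\,dx$, while the cost term is independent of $\de$. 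Chaining these gives $I(\overline{u},\overline{g})\leq I(w,f)$ for every $(w,f)\in\mathcal{A}^{\loc}$, as required. The principal obstacle is the asymmetry in how the objective behaves under the two convergence modes in play (weak $L^2$ for the control, strong $L^2$ for the state): lower semicontinuity is needed on the optimizing side while actual convergence is needed on the recovery side, and the quadratic growth \eqref{Gqcoercive} together with the non-negativity of $\Gamma$ are the structural ingredients that make both work simultaneously.
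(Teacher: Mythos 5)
Your proof is correct and follows essentially the same route as the paper's: compactness from Lemma~\ref{convergenceStateControl}, a comparison sequence $(w_\de, f)$ obtained by freezing the local admissible control $f$ and solving the nonlocal state equation, and a two-sided passage to the limit (lower semicontinuity on the optimal side, dominated convergence on the comparison side). You are, if anything, more careful than the paper at two points it leaves tacit: Lemma~\ref{convergenceStateControl} alone only yields $(\overline{u},\overline{g})\in H^1_0(\Om;\R^n)\times Z_{\text{ad}}$, so the identification of the limit as a member of $\mathcal{A}^{\loc}$ genuinely requires the $\Gamma$-convergence remark and \cite[Corollary 7.20]{dal2012introduction}, which you supply explicitly; and since $F$ satisfies only the two-sided bound~\eqref{Gqcoercive}, Fatou must be applied to the shifted nonnegative integrand $F+c_1|\cdot|^2+l$ as you do, together with weak lower semicontinuity for the control cost, rather than to $F$ directly.
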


\begin{proof}
Lemma \ref{convergenceStateControl} gives the existence of such pair $(\overline{u},\overline{g}) \in \mathcal{A}^{loc}$. We now need to show that this pair minimizes the reduced objective functional in Problem \ref{lCtsProb}. Let $(v, f) \in \Alg^{\loc}$ be arbitrary, and consider, for $\de>0$ the sequence $(v_{\de}, f) \in \mathcal{A}^\de$, i.e., of solutions to the nonlocal boundary value problem \eqref{stateSystem}. We can repeat our argument from Lemma \ref{convergenceStateControl} with $g_\de = g = f$, and see that $v_{\de} \rightarrow v$ strongly in $L^2(\Om; \R^n)$. Then, by the Dominated Convergence Theorem, we have that 
\begin{equation}\label{convOfSolnEq1}
\begin{aligned}
    I(v, f) \ &= \ \int_{\Om}F(x, v(x))dx + \frac{\la}{2}\int_{\Om}\Ga(x) |f(x)|^2 dx  \\
    \ &= \ \lim_{\de \rightarrow 0^+}\left(\int_{\Om}F(x, v_{\de}(x))dx + \frac{\la}{2}\int_{\Om}\Ga(x) |f(x)|^2 dx\right) \\
    \ &= \ \lim_{\de \rightarrow 0^+}I(v_{\de}, f).
\end{aligned}
\end{equation}
Now we observe that $\lim_{\de \rightarrow 0^+}I(v_{\de}, f) \ \geq \ \lim_{\de \rightarrow 0^+}I(\overline{u_{\de}}, \overline{g_{\de}})$ since $\{(\overline{u_{\de}}, \overline{g_{\de}})\}_{\de > 0}$ was chosen as the minimizers for the objective functional \eqref{dmethod2CostIntro}. Next, notice that $\lim_{\de \rightarrow 0^+}I(\overline{u_{\de}}, \overline{g_{\de}}) \ \geq \ I(\overline{u}, \overline{g})$ due to Fatou's Lemma, where we recall that strong $L^2(\Om; \R^n)$ convergence of $\overline{u_{\de}} \rightarrow \overline{u}$ implies a.e. convergence in $\Om$. In summary, the inequality chain 
\begin{equation}\label{limInfAgain}
    I(v, f) \ = \ \lim_{\de \rightarrow 0^+}I(v_{\de}, f) \ \geq \ \lim_{\de \rightarrow 0^+}I(\overline{u_{\de}}, \overline{g_{\de}}) \ \geq \ I(\overline{u}, \overline{g})
\end{equation}
concludes the proof.
\end{proof}


\subsection{Vanishing horizon parameter for discrete problem}\label{gaconvDiscrete}

In order to establish the asymptotic compatibility in Section \ref{compatibility}, one must also consider the $\Ga$-convergence of the discrete problem. The course of proof is similar to that of $\Ga$-convergence for the continuous problem, but one can use the fact that $X_h \subset W^{1, \infty}_0(\Om; \R^n) \subset H^1_0(\Om; \R^n)$ to avoid the use of mollifiers. For these reasons, we merely state the results.

\begin{proposition}[$\Ga$-convergence of discrete problems]\label{convDiscreteGammaTheorem}
We have that $W_{\de} \xrightarrow{\Ga} W_0$ in the family of spaces $\{X_{\de, h}\}_{\de > 0}$ in the strong $L^2(\Om; \R^n)$ topology.
\end{proposition}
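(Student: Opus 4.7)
The plan is to verify conditions \ref{GC1} and \ref{GC2} by adapting the continuous argument of Theorem \ref{GaConvEde} to the finite-element setting. The crucial simplification, already highlighted by the authors in the paragraph preceding the statement, is that every element of $X_{\de,h}$ is a continuous piecewise affine function, hence belongs to $W^{1,\infty}_0(\Om;\R^n) \subset H^1_0(\Om;\R^n)$. Consequently the mollification step needed in the continuous proof becomes unnecessary here: the admissible functions are already smooth enough that Lemma \ref{L-to-NL-for-smooth} can be applied after a brief approximation by $C^2$ functions, exactly as in the recovery step of Theorem \ref{GaConvEde}.

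For the liminf property, take $u_\de \in X_{\de,h}$ with $u_\de \to u$ strongly in $L^2(\Om;\R^n)$; we may assume $\liminf_\de E_\de(u_\de) < \infty$. The argument in the proof of Theorem \ref{GaConvEde} identifying $u$ as an element of $H^1_0(\Om;\R^n)$ through \cite[Theorem 2.5]{mengesha2012nonlocal} transfers unchanged. The three-step proof of $E_0(u) \leq \liminf_\de E_\de(u_\de)$ also transfers: first work with $h\equiv 1$ on a precompactly contained open set $A \Subset \Om$; then pass to simple $h$ via sub-additivity of $\liminf$ on a partition of $\Om_\de$; and finally handle general nonnegative $h \in L^\infty(\Om)$ by monotone convergence. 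The only adjustment is that where Theorem \ref{GaConvEde} introduces the mollified approximants $w_{\ep,\de} = \eta_\ep * u_\de$, one may replace them by $u_\de$ itself since each $u_\de$ already lies in $H^1_0(\Om;\R^n)$.

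For the recovery sequence, given $u$ in the effective domain (a subset of $H^1_0(\Om;\R^n) \cap X_h$) one takes $u_\de := u$, so that $u_\de \to u$ holds trivially in $L^2(\Om;\R^n)$, and it remains to prove $\lim_{\de \to 0^+} E_\de(u) = E_0(u)$. Extending $u$ by zero to $\tilde{u} \in H^1_0(\R^n;\R^n)$ supported in $\Om_1$ and choosing $\{v_j\} \subset C^2(\overline{\Om_1};\R^n)$ with $v_j \to \tilde{u}$ in $H^1$ as $j \to \infty$, the quadratic identity \eqref{EnergyDeDiffSqEq} yields $\sup_{\de>0} |E_\de(\tilde{u}) - E_\de(v_j)| \to 0$. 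Since Lemma \ref{L-to-NL-for-smooth} provides $\lim_{\de\to 0^+} E_\de(v_j) = B_0(v_j,v_j)$ for each fixed $j$, a diagonal exchange of limits, justified by this uniform-in-$\de$ control, produces $E_\de(\tilde{u}) \to B_0(u,u) = E_0(u)$. The main obstacle is precisely this exchange of limits; however, the identical argument has already been carried out at the end of the proof of Theorem \ref{GaConvEde}, so it transfers verbatim to the discrete setting.
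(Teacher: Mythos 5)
The paper itself does not spell out a proof here; it only notes that the continuous argument goes through with the simplification that $X_h \subset W^{1,\infty}_0(\Om;\R^n) \subset H^1_0(\Om;\R^n)$ removes the need for mollifiers. Your proposal correctly identifies this simplification and your treatment of the recovery sequence (take $u_\de := u$, extend, approximate by $C^2$ functions, invoke \eqref{EnergyDeDiffSqEq} and Lemma \ref{L-to-NL-for-smooth}) is exactly right. However, your liminf argument has a genuine gap. You write that one may simply ``replace $w_{\ep,\de}$ by $u_\de$ itself,'' but this overlooks what the mollification was actually doing: $w_{\ep,\de}=\eta_\ep * u_\de$ converges to $w_\ep$ in $C^1(\overline A)$ as $\de\to 0^+$ (for fixed $\ep$), and this strong $C^1$ convergence of a fixed $C^2$ target is what allows the passage from Lemma \ref{L-to-NL-for-smooth} (stated for a \emph{single} smooth function) to the $\de$-dependent family. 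If you discard the mollifier, the only convergence you are handed is $u_\de \to u$ in $L^2(\Om;\R^n)$, and neither $u_\de$ nor $u$ is $C^2$, so the lemma cannot be applied to them directly and the limit in \eqref{liminfIneqProofEq29} does not transfer.

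The missing ingredient is the finite dimensionality of $X_h$ for fixed $h$: since $X_{\de,h}$ is the same finite-dimensional space $X_h$ for every $\de$ (only its norm changes) and $X_h$ is closed in $L^2$, the assumption $u_\de \to u$ in $L^2$ with $u_\de \in X_h$ forces $u \in X_h$ and upgrades the convergence to $u_\de \to u$ in $H^1(\Om;\R^n)$ (indeed in $W^{1,\infty}$). That is what replaces the $C^1$ convergence that mollification supplied. Once one has $u_\de \to u$ in $H^1$, the quadratic estimate \eqref{EnergyDeDiffSqEq} together with the $\de$-uniform embedding of Lemma \ref{ctsXW} yields $|E_\de(u_\de) - E_\de(u)| \to 0$; combined with $E_\de(u) \to E_0(u)$ for $u \in X_h \subset H^1_0$ (which is precisely your recovery-step computation), one gets $\lim_\de E_\de(u_\de) = E_0(u)$, giving both \ref{GC1} and \ref{GC2} at once. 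In particular the three-step $A \Subset \Om$ / simple-$h$ / monotone-convergence scaffolding from Theorem \ref{GaConvEde} is superfluous here: that machinery exists in the continuous proof to handle measurable $u$ with only an energy bound, whereas in the discrete problem everything in sight already lies in a fixed finite-dimensional subspace of $H^1_0$. So the route you propose is salvageable once the finite-dimensionality step is made explicit, but as written it does not establish the liminf inequality.
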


We also present the discrete analogue to to Theorem \ref{convOfSolutions}.

\begin{theorem}[Discrete convergence]\label{discreteConvToSoln}
Suppose $\{(\overline{u_{\de, h}}, \overline{g_{\de, h}}\}_{\de > 0} \in \mathcal{A}^{\de}_h$ is the family of solutions to the non-local discrete problem \ref{nlDiscProb}. Then, there is $(\overline{u_h}, \overline{g_h}) \in \mathcal{A}^{\loc}_h$ such that $\overline{u_{\de, h}} \rightarrow \overline{u_h}$ in $L^2(\Om; \R^n)$ and $\overline{g_{\de, h}} \rightharpoonup \overline{g_{\de}}$ in $L^2(\Om; \R^n)$. Moreover, $(\overline{u_h}, \overline{g_h})$ solves the local discrete optimal control Problem \ref{lDiscProb}.
\end{theorem}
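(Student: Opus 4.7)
The plan is to closely mirror the proof of Theorem \ref{convOfSolutions}, exploiting the finite-dimensionality of $X_h$ and the $\Ga$-convergence result of Proposition \ref{convDiscreteGammaTheorem}. Throughout, $h$ is fixed; only $\de$ varies.

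\textbf{Step 1 (Uniform bounds).} Since $(0,0) \in \Alg^{\de}_h$ is admissible, optimality of $(\overline{u_{\de,h}}, \overline{g_{\de,h}})$ gives $I(\overline{u_{\de,h}}, \overline{g_{\de,h}}) \le I(0,0) < \infty$. Together with the membership $\overline{g_{\de,h}} \in Z_{\text{ad}}$, this bounds $\{\overline{g_{\de,h}}\}_{\de>0}$ uniformly in $L^2(\Om;\R^n)$. Testing the discrete state equation \eqref{stateSystemNLD} against $v_{\de,h} = \overline{u_{\de,h}}$, using coercivity of $B_\de$ (from the lower bound on $H$) and the $\de$-independent nonlocal Poincaré inequality of Proposition \ref{sharperPoincare}, yields a uniform bound $[\overline{u_{\de,h}}]_{X(\Om_\de;\R^n)} + \|\overline{u_{\de,h}}\|_{L^2(\Om;\R^n)} \le C$ just as in Lemma \ref{convergenceStateControl}.

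\textbf{Step 2 (Extraction of limits).} The key simplification is that $\overline{u_{\de,h}}$ lies in the single finite-dimensional space $X_h$ for every $\de$ (only the norm $\|\cdot\|_{X_{\de,h}}$ changes with $\de$). Uniform $L^2$ boundedness together with finite-dimensionality of $X_h$ therefore yields a subsequence with $\overline{u_{\de,h}} \to \overline{u_h}$ strongly in every norm on $X_h$, in particular in $L^2(\Om;\R^n)$, for some $\overline{u_h} \in X_h$. Weak sequential compactness of the bounded, closed, convex set $Z_{\text{ad}} \cap Z_h$ in $L^2(\Om;\R^n)$ yields (passing to a further subsequence) $\overline{g_{\de,h}} \rightharpoonup \overline{g_h}$ with $\overline{g_h} \in Z_{\text{ad}} \cap Z_h$.

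\textbf{Step 3 (Admissibility of the limit).} For each fixed $w_h \in X_h$, we pass to the limit in $B_\de(\overline{u_{\de,h}}, w_h) = \langle \overline{g_{\de,h}}, w_h\rangle$. The right-hand side converges to $\langle \overline{g_h}, w_h\rangle$ by weak convergence. For the left-hand side, polarization reduces matters to showing $E_\de(v_h) \to E_0(v_h)$ for each $v_h \in X_h$; this follows from Proposition \ref{convDiscreteGammaTheorem} (alternatively, by applying the recovery-sequence argument in Theorem \ref{GaConvEde} with the constant sequence $v_h$, which is already piecewise smooth and compactly supported in $\Om_\de$). Bilinearity of $B_\de$ and strong $X_h$-convergence of $\overline{u_{\de,h}} \to \overline{u_h}$ then give $B_\de(\overline{u_{\de,h}}, w_h) \to B_0(\overline{u_h}, w_h)$, so $(\overline{u_h}, \overline{g_h}) \in \Alg^{\loc}_h$.

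\textbf{Step 4 (Optimality).} Following the structure of Theorem \ref{convOfSolutions}, let $(v_h, f_h) \in \Alg^{\loc}_h$ be arbitrary. Solve the nonlocal discrete state equation with control $f_h$ to obtain the unique $v_{\de,h} \in X_{\de,h}$ with $B_\de(v_{\de,h}, w_h) = \langle f_h, w_h\rangle$ for all $w_h \in X_h$. Repeating Step 1 with $g = f_h$ fixed gives uniform bounds on $v_{\de,h}$ in $X_h$, hence a subsequence converging strongly in $L^2$ to some $\til{v}_h \in X_h$. By Step 3 applied to this auxiliary sequence, $(\til{v}_h, f_h) \in \Alg^{\loc}_h$; uniqueness of the solution of \eqref{stateSystemLocDiscrete} with control $f_h$ forces $\til{v}_h = v_h$, and a standard subsequence argument gives convergence of the whole family. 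Dominated convergence (using the growth bound \eqref{Gqcoercive}) yields $I(v_{\de,h}, f_h) \to I(v_h, f_h)$. Optimality of $(\overline{u_{\de,h}}, \overline{g_{\de,h}})$ gives $I(v_{\de,h}, f_h) \ge I(\overline{u_{\de,h}}, \overline{g_{\de,h}})$, and Fatou's lemma together with weak lower semicontinuity of $g \mapsto \int_\Om \Ga |g|^2 dx$ produces
\[
I(v_h, f_h) = \lim_{\de \to 0^+} I(v_{\de,h}, f_h) \ge \liminf_{\de \to 0^+} I(\overline{u_{\de,h}}, \overline{g_{\de,h}}) \ge I(\overline{u_h}, \overline{g_h}),
\]
establishing that $(\overline{u_h}, \overline{g_h})$ solves Problem \ref{lDiscProb}.

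\textbf{Main obstacle.} The principal technical point is the convergence $B_\de(\cdot, w_h) \to B_0(\cdot, w_h)$ on $X_h$ used in Steps 3 and 4. Because $X_h$ consists of continuous piecewise linear (hence only globally Lipschitz) vector fields, Lemma \ref{L-to-NL-for-smooth} does not apply directly element-by-element; one must instead invoke Proposition \ref{convDiscreteGammaTheorem} or carry out a separate argument based on mollifying each $w_h$ inside each simplex and controlling the boundary layers by the uniform $L^1$-mass of $k_\de$. Once this convergence is in hand, the remainder of the proof is a structural repetition of the continuous argument, made easier by the finite-dimensionality of $X_h$.
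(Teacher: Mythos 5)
The paper does not actually supply a proof of this theorem: the preceding paragraph says only that ``the course of proof is similar to that of $\Ga$-convergence for the continuous problem, but one can use the fact that $X_h \subset W^{1,\infty}_0(\Om;\R^n) \subset H^1_0(\Om;\R^n)$ to avoid the use of mollifiers,'' and then the result is merely stated. Your proposal is a correct and complete realization of that hint, and the four-step structure (uniform bounds, subsequence extraction using finite dimensionality, identification of the limit as admissible, and optimality via comparison pairs) matches what the paper has in mind. One remark on your ``main obstacle'' paragraph: the paper's point in invoking $X_h \subset W^{1,\infty}_0$ is that no mollification at all is needed, not even inside simplices. Since a fixed $w_h \in X_h$ is affine on each $T \in \Triag_h$, one has $Dw_h(x,y)/|x-y| = \lang \nabla w_h|_T \tfrac{x-y}{|x-y|}, \tfrac{x-y}{|x-y|}\rang$ exactly whenever $x,y$ lie in the same simplex $T$, so Lemma~\ref{L-to-NL-for-smooth} applies verbatim there; the pairs $(x,y)$ lying in distinct simplices form a boundary layer of thickness $\de$ around the mesh skeleton whose contribution to $E_\de(w_h)$ is bounded by $\|\nabla w_h\|^2_{L^\infty}$ times the $k_\de$-mass of that layer, which is $O(\de/h)$ for $h$ fixed. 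This gives $E_\de(w_h) \to E_0(w_h)$ directly, and then your polarization plus the $\de$-uniform continuity bound of Lemma~\ref{ctsXW} (together with the equivalence of norms on the finite-dimensional $X_h$) closes Step~3 exactly as you outline. Leaning on Proposition~\ref{convDiscreteGammaTheorem} instead is also legitimate, but note it is itself only stated, not proved, in the paper, so the direct boundary-layer argument is the cleaner, self-contained route. Steps~1, 2, and 4 are fine as written.
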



\section{First order Optimality and discretization}\label{FEMAnalysis}
Let us now turn our attention to first-order optimality conditions, which are the gateway to discretizing the nonlocal optimal control problem. From here onward, we assume that our integrand $F$ (first introduced in \eqref{dmethod2CostIntro}) is continuously Gâteaux-differentiable in the second argument. The first Gâteaux derivative will be denoted as $F_u$. We will also denote by $S_{\de}$ the solution operator corresponding to the state system \eqref{stateSystem}, and by $S_{\de}^*$ the adjoint of $S_{\de}$ in the $L^2$-sense. Due to Corollary \ref{fullExisteceAndUniqueness}, the operator $S_{\de}$ is well defined. Using the reduced objective functional \eqref{optCondReducedCost}, we recall that \cite[Lemma 2.21]{troltzsch2010optimal} shows the first order necessary condition
\begin{equation}\label{nlCTS1stOrderOptCond}
  \lang j'(\overline{g_{\de}}), \ga_z - \overline{g_{\de}}\rang \ \geq \ 0 \ \quad \fa \ga_z \in Z_{\text{ad}},
\end{equation}
where $j'$ represents the derivative of $j$ in some appropriate sense. 
 This functional has two terms that need to be differentiated: for the first term, we use the Fréchet differentiability of $F$ and the Chain Rule; the derivative of the second term comes from the Fréchet derivative of $\|\cdot\|^2_{L^2_{\Ga}(\Om; \R^n)}$ (the weighted $\Ga$ norm). See \cite[Lemma 3.5]{d2019priori} for a similar calculation corresponding to the fractional Laplacian.  
 Inequality \eqref{nlCTS1stOrderOptCond} can now be rewritten as 
\begin{equation}\label{optCondVarIneq}
    \lang j'(\overline{g_{\de}}), \ga_z - \overline{g_{\de}}\rang \ = \ \Big\lang S_{\de}^*F_u(\cdot, S_{\de}\overline{g_{\de}}(\cdot)) + \la \Ga\,\overline{g_{\de}}, \ga_z - \overline{g_\de}\Big\rang \ \geq \ 0 \ \quad \fa \ga_z \in Z_{\text{ad}}.
\end{equation}
It is standard to introduce a new notation to rewrite the above as the system 
\begin{eqnarray}\label{OptCondEq}
\begin{aligned}
&\lang \overline{p_{\de}} + \la \Ga\,\overline{g_{\de}}, \ga_z - \overline{g_{\de}}\rang \ \geq \ 0, \quad \forall \ga_z \in Z_{\text{ad}} \\
&\overline{p_{\de}} \ = \ S_{\de}^*F_u(\cdot, \overline{u_{\de}}) \\
&\overline{u_{\de}} \ = \ S_{\de}\overline{g_{\de}}.
\end{aligned}
\end{eqnarray}
Note that $S_{\de}$ is a self-adjoint operator, so $S_{\de}^*F_u(\cdot, \overline{u_{\de}} ) \ = \ S_{\de}F_u (\cdot, \overline{u_{\de}})$, and so $\overline{p_\de} \in X_0(\Om_{\de}; \R^n)$. Furthermore, as a consequence of these conditions, in the event that $\Ga=1$, we obtain  $g_{\de}$ is the $L^2$-projection of the adjoint $\overline{p_\de}$ onto the control space $Z_{\text{ad}}$, i.e. 
\begin{equation}\label{nlCtsCtrlToAdjoint}
  \overline{g_{\de}}(x) \ = \ -\frac{1}{\la}\P_{Z_{\text{ad}}}(\overline{p_{\de}}(x)),
\end{equation}
where $\P_E$ denotes the $L^2$-projection onto the set $E$. Notice that, owing to the assumption that the objective functional is strictly convex, these first order necessary conditions are also sufficient. We summarize the result as follows. 

\begin{proposition}[Optimality conditions]\label{projectionNLCtsSum}
For every $\de>0$, the pair $(\overline{u_{\de}}, \overline{g_{\de}}) \in X_0(\Om_{\de}; \R^n)\times Z_{ad}$ is a solution to  Problem \ref{nlctsProb} if and only if \eqref{OptCondEq} holds. 
\end{proposition}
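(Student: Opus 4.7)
The plan is to work with the reduced objective functional $j(g) := I(S_\de g, g)$, where $S_\de : L^2(\Om;\R^n) \to X_0(\Om_\de;\R^n)$ is the (well-defined and linear) solution operator of the state equation; by Corollary \ref{fullExisteceAndUniqueness} together with the symmetry of $B_\de$, $S_\de$ is self-adjoint when regarded as an operator on $L^2(\Om;\R^n)$. Since the state $\overline{u_\de}$ is uniquely determined by $\overline{g_\de}$ through $\overline{u_\de} = S_\de \overline{g_\de}$, the admissible pair $(\overline{u_\de},\overline{g_\de})$ solves Problem \ref{nlctsProb} if and only if $\overline{g_\de}$ minimizes $j$ over $Z_{\text{ad}}$, so it suffices to characterize minimizers of $j$ on the convex set $Z_{\text{ad}}$.

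The first step is to establish that $j$ is Gâteaux differentiable on $L^2(\Om;\R^n)$ with derivative
\[
 j'(g) \;=\; S_\de^{*} F_u(\cdot, S_\de g) + \la\, \Ga\, g.
\]
For the quadratic regularizer this is immediate. For the term $g \mapsto \int_\Om F(x, S_\de g(x))\,dx$, I would use the standing assumption that $F$ is continuously Gâteaux differentiable in its second argument together with the growth condition \eqref{Gqcoercive} (and the resulting growth of $F_u$) to apply a chain rule: the integrability ensures $F_u(\cdot,S_\de g(\cdot)) \in L^2(\Om;\R^n)$, which lets one write the directional derivative in direction $h$ as $\langle F_u(\cdot,S_\de g), S_\de h\rangle = \langle S_\de^{*} F_u(\cdot,S_\de g), h\rangle$ via self-adjointness of $S_\de$. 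This is where the main technical work is: justifying passage to the limit under the integral, typically via dominated convergence along a sequence $t_k \to 0^+$, using the linear growth of $F_u$ and the $L^2$ continuity of $S_\de$.

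With differentiability of $j$ in hand, the necessity direction is standard: if $\overline{g_\de}$ minimizes $j$ on the convex set $Z_{\text{ad}}$, then for any $\ga_z \in Z_{\text{ad}}$ and $t \in (0,1)$ the convex combination $\overline{g_\de} + t(\ga_z - \overline{g_\de})$ lies in $Z_{\text{ad}}$, so
\[
 \frac{j(\overline{g_\de} + t(\ga_z - \overline{g_\de})) - j(\overline{g_\de})}{t} \;\geq\; 0,
\]
and letting $t \to 0^+$ produces \eqref{nlCTS1stOrderOptCond}. Substituting the formula for $j'$ and setting $\overline{p_\de} := S_\de^{*} F_u(\cdot,\overline{u_\de})$ gives exactly \eqref{OptCondEq}; the fact that $\overline{p_\de} \in X_0(\Om_\de;\R^n)$ follows because $S_\de^{*} = S_\de$ maps $L^2(\Om;\R^n)$ into $X_0(\Om_\de;\R^n)$.

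For sufficiency, the key observation is that $j$ is convex: the quadratic term is convex (strictly convex when $\la > 0$ and $\Ga > 0$), and $g \mapsto \int_\Om F(x, S_\de g(x))\,dx$ is convex because $v \mapsto F(x,v)$ is convex for every $x$ (by the assumption on $F$) and $S_\de$ is linear. For a convex, Gâteaux differentiable functional on a convex set, the variational inequality $\langle j'(\overline{g_\de}), \ga_z - \overline{g_\de}\rangle \geq 0$ for all $\ga_z \in Z_{\text{ad}}$ implies the global inequality
\[
 j(\ga_z) - j(\overline{g_\de}) \;\geq\; \langle j'(\overline{g_\de}), \ga_z - \overline{g_\de}\rangle \;\geq\; 0 \qquad \forall \ga_z \in Z_{\text{ad}},
\]
so $\overline{g_\de}$ is a minimizer of $j$ on $Z_{\text{ad}}$ and $(\overline{u_\de},\overline{g_\de}) = (S_\de \overline{g_\de},\overline{g_\de})$ solves Problem \ref{nlctsProb}. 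The only subtle point, beyond the differentiability step above, is confirming that the computation of $j'$ via the chain rule is legitimate under the mild hypotheses on $F$; once that is accepted, both directions follow from entirely classical convex analysis.
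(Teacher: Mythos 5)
Your proposal takes essentially the same route as the paper: reduce to the unconstrained functional $j(g) = I(S_\de g, g)$, compute $j'(g) = S_\de^* F_u(\cdot, S_\de g) + \la\Ga g$ by the chain rule and self-adjointness of $S_\de$, and obtain necessity as the first-order variational inequality for a minimizer over the convex set $Z_{\text{ad}}$ and sufficiency from convexity. The paper compresses these steps (citing Tr\"oltzsch for the variational inequality, and invoking strict convexity for sufficiency), while you spell out the difference-quotient limit and correctly note that mere convexity of $j$ is what closes the sufficiency direction, with strict convexity only needed for uniqueness; this is a slightly more precise and self-contained rendering of the same argument.
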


\subsection{Error analysis for nonlocal problems}\label{refineMeshNL}

With the aid of the optimality system, we are able to perform an error analysis, which we now begin. From here on we assume, for simplicity, that $\Ga \equiv 1$ and that $F(x,v) = \tfrac12 |v|^2$.  With this at hand, the optimality conditions for the non-local discrete problem read:
\begin{eqnarray}\label{OptCondDEq}
\begin{aligned}
\lang \overline{p_{\de, h}} + \la \overline{g_{\de, h}}, \ga_h - \overline{g_{\de, h}}\rang \ &\geq \ 0, \quad \forall \ga_h \in Z_{\text{ad}} \cap Z_h\\
\overline{p_{\de, h}} \ &= \ S_{{\de, h}}^*\overline{u_{\de, h}} ,\\
\overline{u_{\de, h}} \ &= \ S_{{\de, h}}\overline{g_{\de, h}},
\end{aligned}
\end{eqnarray}
where $S_{{\de, h}}$ is the discrete solution operator, and $S_{{\de, h}}^*$ is its discrete $L^2$ adjoint. Note that $S_{{\de, h}}$ is a self-adjoint operator, so $S^*_{\de, h}F_u(\cdot, \overline{u_{\de, h}} ) = S_{{\de, h}}F_u (\cdot, \overline{u_{\de, h}} )$. Also as with the non-local continuous optimality conditions, it follows that $\overline{g_{\de, h}}(x) \ = \ -\frac{1}{\la}\P_{Z_{\text{ad}}}(\Pi_0 \overline{p_{\de, h}}(x))$, where $\Pi_0: L^2(\Om; \R^n) \rightarrow Z_h$ denotes the $L^2$-projection onto $Z_h$.

To ease the error analysis, define the intermediary functions $\widehat{u_{\de}}, \widehat{p_{\de}} \in X_0(\Om_{\de}; \R^n)$ such that
\begin{equation}\label{discretizeEq3A}
        B_{\de}(\widehat{u_{\de}}, v_{\de}) \ = \ \lang \overline{g_{\de, h}}, v_{\de}\rang \ \quad \fa v_{\de} \in X_0(\Om_{\de}; \R^n); 
        \end{equation}
        \begin{equation}\label{discretizeEq3B}
        B_{\de}(v_{\de}, \widehat{p_{\de}}) \ = \ \lang v_{\de}, \overline{u_{\de, h}}\rang \ \quad \ \fa v_{\de}\in X_0(\Om_{\de}; \R^n).
\end{equation}
The existence and uniqueness of these functions follows from the Lax-Milgram Theorem. More importantly, we observe that the optimal discrete state and adjoint variables are nothing but the Galerkin approximations to $\widehat{u_{\de}}, \widehat{p_{\de}}$, respectively. From this we immediately obtain, using C\'ea's Lemma,  that
\begin{equation}\label{eq:BestApprox}
  \begin{aligned}
    \|\widehat{u_{\de}} - \overline{u_{\de, h}}\|_{X(\Om_{\de}; \R^n)} &\lesssim \inf_{v_{\de, h} \in X_{\de,h} } \|\widehat{u_{\de}} - v_{\de, h} \|_{X(\Om_{\de}; \R^n)}, \\
    \|\widehat{p_{\de}} - \overline{p_{\de,h}}\|_{X(\Om_{\de}; \R^n)} &\lesssim \inf_{q_{\de, h} \in X_{\de,h} } \|\widehat{p_{\de}} - q_{\de, h} \|_{X(\Om_{\de}; \R^n)}.
  \end{aligned}
\end{equation}

We now prove error estimates for the state and adjoint.

\begin{theorem}[State and adjoint error estimates]\label{FEMErrorEst} Suppose that $(\overline{u_{\de, h}}, \overline{g_{\de, h}})$ is the solution to Problem \ref{nlDiscProb}; $\overline{p_{\de, h}}$ solves the discrete adjoint equation in \eqref{OptCondDEq} given $\overline{u_{\de, h}}$; ($\overline{u_{\de}}, \overline{g_{\de}})$ is the solution to Problem \ref{nlctsProb}; and $\overline{p_{\de}}$ solves the continuous adjoint equation in \eqref{OptCondEq} corresponding to the state $\overline{u_{\de}}$. Then we have these error estimates for the states, and the adjoints:
\begin{equation}\label{FEMErrorEstEq1}
    \|\overline{u_{\de}} - \overline{u_{\de, h}}\|_{X(\Om_{\de}; \R^n)} \ \lesssim \ \inf_{v_{\de, h} \in X_{\de, h}}\|\widehat{u_{\de}} - v_{\de, h}\|_{X(\Om_{\de}; \R^n)} + \|\overline{g_{\de}} - \overline{g_{\de, h}}\|_{L^2(\Om; \R^n)};
\end{equation}
\begin{multline}\label{FEMErrorEstEq2}
    \|\overline{p_{\de}} - \overline{p_{\de, h}}\|_{X(\Om_{\de}; \R^n)} \ \lesssim \ \inf_{v_{\de, h} \in X_{\de, h}}\|\widehat{p_\de} - v_{\de, h}\|_{X(\Om_{\de}; \R^n)} + \inf_{v_{\de, h} \in X_{\de, h}}\|\widehat{u_{\de}} - v_{\de, h}\|_{X(\Om_{\de}; \R^n)} \\+ \|\overline{g_{\de}} - \overline{g_{\de, h}}\|_{L^2(\Om; \R^n)}.
\end{multline}
\end{theorem}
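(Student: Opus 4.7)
The plan is a standard triangle-inequality argument that splits each error through the auxiliary functions $\widehat{u_\de}, \widehat{p_\de}$ from \eqref{discretizeEq3A}--\eqref{discretizeEq3B}: the Galerkin piece is controlled by the C\'ea-type best-approximation estimate \eqref{eq:BestApprox}, and the continuous-level remainder is handled by stability of the state and adjoint equations. Two facts from earlier in the paper drive the bookkeeping: $B_\de$ is coercive on $X_0(\Om_\de;\R^n)$ (since $H \geq h_{\min}/2$), and the seminorm $[\,\cdot\,]_{X(\Om_\de;\R^n)}$ is equivalent to the full norm via the nonlocal Poincar\'e inequality in Proposition \ref{sharperPoincare}, whose constant is $\de$-independent.

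For \eqref{FEMErrorEstEq1}, I would write
\begin{equation*}
\|\overline{u_\de} - \overline{u_{\de,h}}\|_{X(\Om_\de;\R^n)} \leq \|\overline{u_\de} - \widehat{u_\de}\|_{X(\Om_\de;\R^n)} + \|\widehat{u_\de} - \overline{u_{\de,h}}\|_{X(\Om_\de;\R^n)}.
\end{equation*}
The second summand is immediately bounded by \eqref{eq:BestApprox}. For the first, subtracting \eqref{discretizeEq3A} from the continuous state equation \eqref{stateSystem} shows that $w := \overline{u_\de} - \widehat{u_\de} \in X_0(\Om_\de;\R^n)$ satisfies $B_\de(w, v) = \langle \overline{g_\de} - \overline{g_{\de,h}}, v\rangle$ for every $v \in X_0(\Om_\de;\R^n)$. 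Testing with $v = w$, and then successively applying coercivity of $B_\de$, Cauchy--Schwarz, and Proposition \ref{sharperPoincare}, yields $\|w\|_{X(\Om_\de;\R^n)} \lesssim \|\overline{g_\de} - \overline{g_{\de,h}}\|_{L^2(\Om;\R^n)}$, which combined with \eqref{eq:BestApprox} gives \eqref{FEMErrorEstEq1}.

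The adjoint estimate \eqref{FEMErrorEstEq2} follows the same recipe. Splitting
\begin{equation*}
\|\overline{p_\de} - \overline{p_{\de,h}}\|_{X(\Om_\de;\R^n)} \leq \|\overline{p_\de} - \widehat{p_\de}\|_{X(\Om_\de;\R^n)} + \|\widehat{p_\de} - \overline{p_{\de,h}}\|_{X(\Om_\de;\R^n)},
\end{equation*}
the latter term is again bounded by \eqref{eq:BestApprox}. Using the standing simplification $F(x,v) = \tfrac12 |v|^2$ (so $F_u(\cdot,u) = u$) and subtracting \eqref{discretizeEq3B} from the defining equation for $\overline{p_\de}$ in \eqref{OptCondEq}, one sees that $B_\de(v, \overline{p_\de} - \widehat{p_\de}) = \langle v, \overline{u_\de} - \overline{u_{\de,h}}\rangle$ for every $v \in X_0(\Om_\de;\R^n)$. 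Testing with $v = \overline{p_\de} - \widehat{p_\de}$ and invoking coercivity together with Proposition \ref{sharperPoincare} produces $\|\overline{p_\de} - \widehat{p_\de}\|_{X(\Om_\de;\R^n)} \lesssim \|\overline{u_\de} - \overline{u_{\de,h}}\|_{L^2(\Om;\R^n)}$, and then \eqref{FEMErrorEstEq1} absorbs the right-hand side into the two admissible terms on the right-hand side of \eqref{FEMErrorEstEq2}. The only subtle point worth emphasizing is the $\de$-uniformity of the hidden constants, which is inherited directly from the uniform Poincar\'e constant in Proposition \ref{sharperPoincare}; beyond this, the argument is entirely routine duality-based finite-element analysis, and I do not foresee any genuine obstacle.
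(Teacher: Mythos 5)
Your argument is correct and coincides with the paper's own proof: the paper likewise splits through the auxiliary functions $\widehat{u_\de}$, $\widehat{p_\de}$, bounds the discrete piece by C\'ea's estimate \eqref{eq:BestApprox}, and derives the stability bound $\|\overline{u_\de}-\widehat{u_\de}\|_{X(\Om_\de;\R^n)}\lesssim\|\overline{g_\de}-\overline{g_{\de,h}}\|_{L^2(\Om;\R^n)}$ by subtracting \eqref{discretizeEq3A} from \eqref{stateSystem}, testing with the difference, and applying coercivity, Cauchy--Schwarz, and Proposition~\ref{sharperPoincare}. The paper omits the adjoint estimate as ``the same procedure,'' and your reconstruction of it---subtracting \eqref{discretizeEq3B} from the continuous adjoint equation and then invoking \eqref{FEMErrorEstEq1}---is precisely what was intended.
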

\begin{proof}
We begin by proving \eqref{FEMErrorEstEq1}. Substitute $v_{\de} := \overline{u_{\de}} - \widehat{u_{\de}}$ in \eqref{stateSystem} and \eqref{discretizeEq3A}, and subtract those two equations to obtain
\begin{equation}\label{FEMErrorEstEq3}
    B_{\de}(\overline{u_{\de}} - \widehat{u_{\de}}, \overline{u_{\de}} - \widehat{u_{\de}}) \ = \ \lang \overline{g_{\de}} - \overline{g_{\de, h}}, \overline{u_{\de}} - \widehat{u_{\de}}\rang.
\end{equation}
Using the definition of $H(x, y)$, Hölder's inequality, and \eqref{sharperPoincare} gives
\begin{equation}\label{FEMErrorEstEq7}
    \|\overline{u_{\de}} - \widehat{u_{\de}}\|_{X(\Om_{\de}; \R^n)} \ \lesssim \ \|\overline{g_{\de}} - \overline{g_{\de, h}}\|_{L^2(\Om; \R^n)}.
\end{equation}
This, combined with \eqref{eq:BestApprox} then yields the result.
The proof of \eqref{FEMErrorEstEq2} uses the same procedure, and is thus omitted. 
\end{proof}

At this stage we must observe that the infima in \eqref{FEMErrorEstEq1} and \eqref{FEMErrorEstEq2} tend to zero as $h \to 0^+$. This is because of density; if a rate of convergence in these terms is desired, then further regularity of $\widehat{u_{\de}}$ and $\widehat{p_\de}$ must be studied. For some kernels this could be done, for instance, by exploiting that $\overline{u_{\de,h}}$ belongs to a space that is strictly smaller than the dual of $X_0(\Om_{\de};\R^n)$; see, for instance, \cite{acosta2017fractional, grubb2015fractional,ros2014dirichlet}. Due to the generality we place on our kernel, we do not pursue this. It remains to estimate the difference between continuous and discrete controls, which will now be our focus. 

While in general our controls only belong to $L^2(\Om; \R^n)$, in the event we have additional regularity, we can quantify our forthcoming estimates even more. Indeed, in the local case, the projection formula $\overline{g}(x) = - \frac{1}{\la}\P_{Z_{\text{ad}}}(\overline{p}(x))$ combined with the fact that $\overline{p} \in H^1_0(\Om; \R^n)$ imply further regularity on the control (namely, that $\overline{g}\in H^1(\Om; \R^n)$). The following lemma provides a sufficient condition on the kernel for this to also be the case for nonlocal problems.

In the following result, we require $s \neq \frac{1}{2}$ to be able to use the Hardy-type inequality \cite[Theorem 2.3]{mengesha2019fractional}. This is essentially a technicality.

\begin{lemma}[Regularity of control for fractional-type kernels]\label{regularityControlFrac}
Suppose that in the definition of $Z_{ad}$, given in \eqref{linearAdSet}, the functions $a$ and $b$ are constants. Suppose also that, in addition to the contents of Assumption \ref{kernelAssump}, we have that
\begin{equation}\label{kernelOfFracType}
    \frac{k_{\de}(\xi)}{|\xi|^2} \ \sim \ \frac{1}{|\xi|^{n + 2s}} 
\end{equation}
holds for all $\xi \in B_\de(0)$, for some $s \neq \frac{1}{2}$. Then, necessarily, $\overline{g_{\de}} \in H^{s}(\Om; \R^n)$. 
\end{lemma}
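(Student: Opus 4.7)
The plan is to leverage the pointwise projection formula coming from the first-order optimality system so as to transfer regularity of the adjoint $\overline{p_\de}$ to the control $\overline{g_\de}$. Under the standing assumption $\Gamma\equiv 1$, the variational inequality in \eqref{OptCondEq} is equivalent to
\[
\overline{g_\de}(x) \ = \ \mathcal{P}_{Z_{\text{ad}}}\!\left(-\lambda^{-1}\overline{p_\de}(x)\right) \quad\text{a.e. } x\in\Omega.
\]
Because $a$ and $b$ are constant vectors and $0\in Z_{\text{ad}}$, $\mathcal{P}_{Z_{\text{ad}}}\colon\mathbb{R}^n\to\mathbb{R}^n$ decouples into $n$ independent scalar clippings $z_i \mapsto \max(a_i,\min(b_i,z_i))$; this map is componentwise $1$-Lipschitz and fixes the origin. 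Inserting $|F(u(x))-F(u(y))|\le L|u(x)-u(y)|$ inside the Gagliardo integral shows that such a truncation is a bounded self-map of $H^s(\Omega;\mathbb{R}^n)$ for every $s\in(0,1)$. The lemma therefore reduces to showing that $\overline{p_\de}\in H^s(\Omega;\mathbb{R}^n)$.

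For the regularity of the adjoint, I start from $\overline{p_\de}\in X_0(\Omega_\de;\mathbb{R}^n)$ and extend by zero to all of $\mathbb{R}^n$, which preserves the relevant seminorm. The fractional-type hypothesis \eqref{kernelOfFracType} makes $[\cdot]_X^2$ comparable to the projected-difference fractional form
\[
\iint_{\mathbb{R}^n\times\mathbb{R}^n}\frac{|D\overline{p_\de}(x,y)|^2}{|x-y|^{n+2s}}\,dx\,dy.
\]
To promote projected differences to genuine vector differences I would invoke a vector-valued fractional Korn-type inequality as in \cite{mengesha2019fractional, scott2018fractional}, which upgrades the above to the full Gagliardo seminorm
\[
\iint_{\mathbb{R}^n\times\mathbb{R}^n}\frac{|\overline{p_\de}(x)-\overline{p_\de}(y)|^2}{|x-y|^{n+2s}}\,dx\,dy.
\]

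To conclude $\overline{p_\de}\in H^s(\Omega;\mathbb{R}^n)$ I still need to control the contribution where one argument lies outside $\Omega_\de$. Since the zero extension gives $|\overline{p_\de}(x)-\overline{p_\de}(y)|^2=|\overline{p_\de}(x)|^2$ for $y\notin\Omega_\de$, and $\int_{\mathbb{R}^n\setminus\Omega_\de}|x-y|^{-n-2s}\,dy\sim \dist(x,\partial\Omega)^{-2s}$, this tail is dominated by $\int_\Omega|\overline{p_\de}(x)|^2\dist(x,\partial\Omega)^{-2s}\,dx$, which is precisely what the fractional Hardy inequality \cite[Theorem 2.3]{mengesha2019fractional} bounds, and exactly where the restriction $s\neq 1/2$ enters. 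Combined with the reduction from the first paragraph, this delivers $\overline{g_\de}\in H^s(\Omega;\mathbb{R}^n)$.

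The main obstacle I foresee is the Korn-type step upgrading the projected-difference seminorm to the full fractional Gagliardo seminorm: the form $[\cdot]_X$ only sees the strain-like part of $\overline{p_\de}$, so passing to genuine vector differences requires a truly vectorial nonlocal Korn inequality, the technical heart of \cite{mengesha2019fractional, scott2018fractional}. The remaining ingredients---the pointwise projection formula, Lipschitz stability of $H^s$ under scalar clipping, and the Hardy inequality under $s\neq 1/2$---are comparatively standard.
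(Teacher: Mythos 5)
Your overall strategy mirrors the paper's: use the pointwise projection formula \(\overline{g_\de}=-\lambda^{-1}\mathcal P_{Z_{\text{ad}}}(\overline{p_\de})\), observe that the componentwise clipping is Lipschitz and therefore a bounded self-map on \(H^s(\Omega;\R^n)\), and reduce to showing \(\overline{p_\de}\in H^s(\Omega;\R^n)\); for the last step you correctly identify the fractional Korn upgrade from projected differences to full vector differences as the technical heart. The paper does exactly this, except that it cites the bounded-domain characterization \(\chi^s(\Omega;\R^n)=H^s(\Omega;\R^n)\) of \cite[Theorem~1.1]{mengesha2020fractional} in one stroke, which handles both the Korn step and the domain boundary; the hypothesis \(s\neq\tfrac12\) is consumed internally there, because the proof of that characterization uses the fractional Hardy inequality \cite[Theorem~2.3]{mengesha2019fractional}.

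Two points in your writeup are off, though. First, the claimed ``comparability'' of \([\cdot]_X^2\) to the full \(\R^n\times\R^n\) projected-difference form is not literally correct: \(k_\de\) vanishes for \(|x-y|\geq\de\), so hypothesis \eqref{kernelOfFracType} only gives comparability on the set \(\{|x-y|<\de\}\). The far-field piece \(\iint_{|x-y|\geq\de}\frac{|Dp|^2}{|x-y|^{n+2s}}\,dx\,dy\) is bounded separately, but this needs nothing more than \(|D\overline{p_\de}(x,y)|\le |\overline{p_\de}(x)|+|\overline{p_\de}(y)|\), compact support after zero extension, and the elementary fact that \(\int_{|z|\geq\de}|z|^{-n-2s}\,dz\sim\de^{-2s}<\infty\). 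No Hardy inequality is involved.

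Second, the ``tail control'' paragraph is a logical misstep. Once you have (by the Korn step) that \(\iint_{\R^n\times\R^n}\frac{|\overline{p_\de}(x)-\overline{p_\de}(y)|^2}{|x-y|^{n+2s}}\,dx\,dy<\infty\), the quantity you say you ``still need to control'' is already subsumed in that finite integral, and restricting to \(\Omega\times\Omega\) gives \(\overline{p_\de}\in H^s(\Omega;\R^n)\) immediately. The fractional Hardy inequality therefore plays no role in your version of the argument, and your attribution of the \(s\neq\tfrac12\) restriction to a separate tail estimate is misplaced. If one follows the paper and invokes the bounded-domain equivalence \(\chi^s(\Omega;\R^n)\simeq H^s(\Omega;\R^n)\), then \(s\neq\tfrac12\) is indeed required, but it is hidden inside the proof of that cited theorem rather than appearing as a standalone tail estimate in this lemma.
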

\begin{proof} We introduce some notation specifically for this proof. As seen in \cite{mengesha2020fractional}, we denote by $\|u\|_{H^s(\Om; \R^n)}$ the fractional Sobolev norm on vector fields, and denote by $H^s(\Om; \R^n)$ the space of vector fields with finite fractional Sobolev norm.  It has been shown in  \cite[Theorem 1.1]{mengesha2020fractional} that 
the space 
\[
\chi^s(\Om; \R^n) \ := \ \left\{u\in L^2(\Om;\R^n) \ \middle| \ \int_\Om\int_{\Om} {|Du (x, y)|^{2} \over |x-y|^{n+2s}}dydx <\infty \right\}
\] coincides with $H^s(\Om; \R^n)$ with comparable norms.  
Now since  $\overline{p_\de} \in X_0(\Om_{\de};\R^n)$ and $k_\de$ satisfies \eqref{kernelOfFracType}, via direct calculation we have that $\overline{p_\de} \in \chi^s(\Om; \R^n)$, and so it is in $H^{s}(\Om;\R^n)$.   
To finish the proof, we recall the component-wise, pointwise formula
\begin{equation}\label{regularityControlFracEq3}
    \mathbb{P}_{Z_{\text{ad}}}(\overline{p_{\de}}) \ = \ \max\{a, \min\{\overline{p_{\de}}, b\}\},
\end{equation}
proven in \cite[Theorem 2.28]{troltzsch2010optimal}, where we use the assumption that the boxing functions in $Z_{\text{ad}}$ are constants. It is now clear that $   \mathbb{P}_{Z_{\text{ad}}}(\overline{p_{\de}})$ is in $H^{s}(\Om;\R^n)$ from directly estimating the max-min expression. Moreover, $\| \mathbb{P}_{Z_{\text{ad}}}(\overline{p_{\de}})\|_{H^{s}(\Omega;\R^n)} \lesssim  \|\overline{p_{\de}}\|_{H^{s}(\Omega;\R^n)}$. The conclusion for $g_\de$ follows from the formula \eqref{nlCtsCtrlToAdjoint}. 
\end{proof}

\begin{remark}\label{FracControlInterpolationApproach}
An alternative to estimating $\|\overline{g_{\de}}\|_{H^s(\Om; \R^n)}$ directly is to use interpolation theory; see \cite[Chapter 16]{leoni2017first} and \cite[Chapter 25]{tartar2007introduction}. To see this, it suffices to recall that the $H^s(\Omega;\R^n)$ space is an intermediate space between $H^1(\Omega;\R^n)$ and $L^2(\Omega;\R^n)$.
\end{remark}

Having shown that it is possible for the control to lie in a smoother space than $L^2(\Om; \R^n)$, we can proceed with the error analysis. Again, due to the generality of the kernel we are not very explicit in this. Instead, we introduce $\om: \R_+ \to \R_+$ for which $\lim_{h \rightarrow 0^+}\om(h) = 0$. This is such that, if $w \in \P_{Z_{\text{ad}}}X_0(\Om_{\de}; \R^n)$, then
\begin{equation}\label{smoothSeminormFullSpace}
    \|\Pi_0 w - w\|_{L^2(\Om; \R^n)} \ \leq \ \om(h),
\end{equation}
where $\Pi_0: L^2(\Om; \R^n) \rightarrow Z_h$ denotes the $L^2$-projection onto $Z_h$. Clearly, $\om$ depends on the spatial dimension $n$, on the embedding number (or Gelfand width) of the embedding $X_0(\Om_{\de};\R^n) \subset L^2(\Om_{\de};\R^n)$, and on the properties of $\P_{Z_{\text{ad}}}$. In the setting of Lemma \ref{regularityControlFrac} a proper rate of approximation can be established.

\begin{lemma}[Approximation with smoothness]\label{smoothnessapproxseminorm}
Assume that $k_{\de}$ satisfies \eqref{kernelOfFracType} on $B_\de(0)$ for some $s \neq \frac{1}{2}$, then
\begin{equation}\label{smoothSeminormFullSpaceX}
    \|\Pi_0 w - w\|_{L^2(\Om; \R^n)} \ \lesssim \ h^s\|w\|_{X(\Om_{\de}; \R^n)}, \qquad \forall \ w \in X_0(\Om_{\de}; \R^n).
\end{equation}
\end{lemma}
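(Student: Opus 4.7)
The plan is to reduce the estimate to the classical approximation bound $\|\Pi_0 w - w\|_{L^2(\Om;\R^n)} \lesssim h^{s}\,[w]_{H^s(\Om;\R^n)}$ for the $L^2$-projection onto piecewise constants, and then use the kernel assumption \eqref{kernelOfFracType} together with the nonlocal Korn-type equivalence $\chi^s(\Om;\R^n)\cong H^s(\Om;\R^n)$ already invoked in the proof of Lemma~\ref{regularityControlFrac} to bound the fractional seminorm by $\|w\|_{X(\Om_\de;\R^n)}$.

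\emph{Local approximation on each element.} On any $T\in\Triag_h$ with $|T|\sim h^n$, the piecewise-constant projection is $\Pi_0 w|_T = \fint_T w$. Cauchy--Schwarz together with Jensen gives
\begin{equation*}
  \|\Pi_0 w - w\|_{L^2(T;\R^n)}^2 \ \leq \ |T|^{-1}\iint_{T\times T}|w(x)-w(y)|^2\,dx\,dy.
\end{equation*}
Since $|x-y|\lesssim h$ on $T\times T$, multiplying and dividing the integrand by $|x-y|^{n+2s}$ produces the bound
\begin{equation*}
  \|\Pi_0 w - w\|_{L^2(T;\R^n)}^2 \ \lesssim \ h^{2s}\iint_{T\times T}\frac{|w(x)-w(y)|^2}{|x-y|^{n+2s}}\,dx\,dy.
\end{equation*}
Summing over the disjoint cells in the quasi-uniform mesh then yields $\|\Pi_0 w - w\|_{L^2(\Om;\R^n)}^2 \lesssim h^{2s}\,[w]_{H^s(\Om;\R^n)}^2$.

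\emph{From $H^s$ back to $X$.} It remains to control $[w]_{H^s(\Om;\R^n)}$ by $\|w\|_{X(\Om_\de;\R^n)}$. Via the Korn identification $\chi^s(\Om;\R^n)=H^s(\Om;\R^n)$ with equivalent norms, it suffices to bound $\iint_{\Om\times\Om}|Dw(x,y)|^2/|x-y|^{n+2s}\,dx\,dy$. I split this integral according to whether $|x-y|<\de$ or $|x-y|\geq\de$. On the first region, \eqref{kernelOfFracType} directly gives $|x-y|^{-n-2s}\lesssim k_\de(|x-y|)/|x-y|^2$, so that contribution is dominated by $[w]_{X(\Om_\de;\R^n)}^2$. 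On the complementary region I use the pointwise inequality $|Dw|^2\leq 2|w(x)|^2+2|w(y)|^2$ together with the integrability $\int_{|\xi|\geq\de}|\xi|^{-n-2s}\,d\xi<\infty$ to dominate by a constant multiple of $\|w\|_{L^2(\Om;\R^n)}^2$; the nonlocal Poincaré inequality of Proposition~\ref{sharperPoincare} then absorbs this into $\|w\|_{X(\Om_\de;\R^n)}^2$.

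\emph{Main obstacle.} The only nontrivial ingredient is passing from the scalar, bond-parallel projected difference $Dw$ that appears in the $X$-seminorm to the full vector difference $w(x)-w(y)$ driving the piecewise-constant approximation bound. This is exactly the vector-valued nonlocal Korn inequality of \cite{mengesha2020fractional}, and the exceptional exponent $s=\tfrac{1}{2}$ excluded from the hypothesis is the footprint of that tool, entering through the Hardy-type inequality needed to justify zero extensions in $H^s$.
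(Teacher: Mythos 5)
Your proof is correct and follows the same route as the paper's: a local fractional Poincar\'e estimate on each element proved via Jensen (which the paper cites from \cite[Lemma 7.1]{ern2017finite}), summation over the quasi-uniform mesh, and the nonlocal Korn-type identification of $\chi^s(\Om;\R^n)$ with $H^s(\Om;\R^n)$ from \cite[Theorem 1.1]{mengesha2020fractional} to pass to the $X(\Om_{\de};\R^n)$-norm. You additionally spell out, by splitting the Gagliardo integral at $|x-y|=\de$ and invoking the kernel equivalence \eqref{kernelOfFracType} together with the nonlocal Poincar\'e inequality, the final passage from $[w]_{H^s(\Om;\R^n)}$ to $\|w\|_{X(\Om_{\de};\R^n)}$, a step the paper compresses into a single citation.
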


\begin{proof}
The proof repeats that of the Fractional Poincaré Inequality \cite[Lemma 7.1]{ern2017finite} in the vector-valued setting, to obtain the estimate
\begin{equation}\label{smoothSeminormFullSpaceXEq1}
     \|\Pi_0 w - w\|_{L^2(T; \R^n)} \ \lesssim \ h^s[w]_{H^s(T; \R^n)}.
\end{equation}
for each $T \in \Triag_h$. Since our mesh is quasi-uniform, from \eqref{smoothSeminormFullSpaceXEq1} we may deduce, via a localization argument, that
\begin{equation}\label{smoothSeminormFullSpaceXEq2}
\|\Pi_0 w - w\|_{L^2(\Om; \R^n)} \ \lesssim \ h^s[w]_{H^s(\Om; \R^n)}.
\end{equation}
Then invoke \cite[Theorem 1.1]{mengesha2020fractional} to estimate $[w]_{H^s(\Om; \R^n)}$ in terms of the $X(\Om_{\de};\R^n)$ norm.
\end{proof}

We can now obtain an error estimate for the control. In the following result the idea is that, once further regularity of the state/adjoint is known (which can be done for more specific kernels), and a bound on $\om$ like the one in Lemma \ref{smoothnessapproxseminorm} is obtained, the right hand side in the estimate below can  be bounded by a power of $h$.

\begin{theorem}[Convergence of controls]\label{controlerrorestbondp=2}
Assume that $\overline{g_{\de}}$ is the optimal control associated with Problem \ref{nlctsProb}, and $\overline{g_{\de, h}}$ is the discrete optimal control associated with Problem \ref{nlDiscProb}. Then we have the estimate
\begin{equation}\label{errorOfControlsEst}
\|\overline{g_{\de}} - \overline{g_{\de, h}}\|^2_{L^2(\Om; \R^n)} \ \lesssim \ \om(h)^2 + \left(\inf_{v_{\de, h} \in X_{\de, h}}[\overline{u_{\de}} - v_{\de, h}]_{X(\Om_{\de}; \R^n)} \right)^2 + \left(\inf_{v_{\de, h} \in X_{\de, h}}[\overline{p_{\de}} - v_{\de, h}]_{X(\Om_{\de}; \R^n)}\right)^2.
\end{equation}
\end{theorem}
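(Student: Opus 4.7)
The plan is to combine the first-order optimality conditions for the continuous and discrete problems by testing each against a carefully chosen admissible element. Specifically, I would test \eqref{OptCondEq} with $\ga_z = \overline{g_{\de,h}} \in Z_{\text{ad}}$ and test the discrete variational inequality in \eqref{OptCondDEq} with $\ga_h = \Pi_0 \overline{g_{\de}}$, which lies in $Z_{\text{ad}} \cap Z_h$ because the box structure of $Z_{\text{ad}}$ is preserved by the cell-wise $L^2$-projection onto piecewise constants. Adding the two resulting inequalities and using $\la \langle \overline{g_{\de}} - \overline{g_{\de,h}}, \overline{g_{\de,h}} - \overline{g_{\de}}\rangle = -\la \|\overline{g_{\de}} - \overline{g_{\de,h}}\|^2_{L^2(\Om;\R^n)}$ yields the master estimate
\begin{equation*}
\la \|\overline{g_{\de}} - \overline{g_{\de,h}}\|^2_{L^2(\Om;\R^n)} \leq \langle \overline{p_{\de}} - \overline{p_{\de,h}}, \overline{g_{\de,h}} - \overline{g_{\de}}\rangle + \langle \overline{p_{\de,h}} + \la \overline{g_{\de,h}}, \Pi_0 \overline{g_{\de}} - \overline{g_{\de}}\rangle,
\end{equation*}
and I will denote the two right-hand terms by $I_1$ and $I_2$ respectively.

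To bound $I_1$, I would introduce the auxiliary state and adjoint $\widehat{u_{\de}}, \widehat{p_{\de}}$ defined in \eqref{discretizeEq3A}--\eqref{discretizeEq3B} and split $I_1 = \langle \overline{p_{\de}} - \widehat{p_{\de}}, \overline{g_{\de,h}} - \overline{g_{\de}}\rangle + \langle \widehat{p_{\de}} - \overline{p_{\de,h}}, \overline{g_{\de,h}} - \overline{g_{\de}}\rangle$. A direct computation using the four defining equations for $\overline{u_{\de}},\overline{p_{\de}},\widehat{u_{\de}},\widehat{p_{\de}}$ shows that $B_{\de}(\overline{u_{\de}} - \widehat{u_{\de}}, \overline{p_{\de}} - \widehat{p_{\de}}) = \langle \overline{g_{\de}} - \overline{g_{\de,h}}, \overline{p_{\de}} - \widehat{p_{\de}}\rangle$ and also equals $\|\overline{u_{\de}} - \widehat{u_{\de}}\|^2_{L^2(\Om;\R^n)}$, so the first summand is non-positive and can be dropped. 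For the second summand I would apply Cauchy--Schwarz, the nonlocal Poincar\'e inequality \eqref{sharperPoincareIneq}, and C\'ea's estimate \eqref{eq:BestApprox} to get $\|\widehat{p_{\de}} - \overline{p_{\de,h}}\|_{L^2(\Om;\R^n)} \lesssim \inf_{v_{\de,h} \in X_{\de,h}} [\widehat{p_{\de}} - v_{\de,h}]_{X(\Om_{\de};\R^n)}$; a triangle inequality then replaces $\widehat{p_{\de}}$ by $\overline{p_{\de}}$ at the cost of $[\overline{p_{\de}} - \widehat{p_{\de}}]_{X(\Om_{\de};\R^n)}$, which energy estimates analogous to those in Theorem~\ref{FEMErrorEst} bound by $\|\overline{g_{\de}} - \overline{g_{\de,h}}\|_{L^2(\Om;\R^n)}$. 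A Young's inequality with small parameter then isolates an absorbable $\|\overline{g_{\de}} - \overline{g_{\de,h}}\|^2_{L^2(\Om;\R^n)}$ contribution together with the desired squared best-approximation term for $\overline{p_{\de}}$.

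To bound $I_2$, I would exploit two orthogonality relations. First, $\la \overline{g_{\de,h}} \in Z_h$ and $\Pi_0 \overline{g_{\de}} - \overline{g_{\de}}$ is $L^2$-orthogonal to $Z_h$, so $\langle \la \overline{g_{\de,h}}, \Pi_0 \overline{g_{\de}} - \overline{g_{\de}}\rangle = 0$. Second, the same orthogonality applied to $\Pi_0 \overline{p_{\de,h}}$ gives $\langle \overline{p_{\de,h}}, \Pi_0 \overline{g_{\de}} - \overline{g_{\de}}\rangle = \langle \overline{p_{\de,h}} - \Pi_0 \overline{p_{\de,h}}, \Pi_0 \overline{g_{\de}} - \overline{g_{\de}}\rangle$. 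Cauchy--Schwarz and the defining property of $\om$ (applicable to $\overline{g_{\de}} = -\tfrac{1}{\la}\P_{Z_{\text{ad}}}(\overline{p_{\de}}) \in \P_{Z_{\text{ad}}}(X_0(\Om_{\de};\R^n))$ via \eqref{nlCtsCtrlToAdjoint}) bound the second factor by $\om(h)$. The first factor is controlled by a triangle inequality against $\overline{p_{\de}}$ and $\Pi_0 \overline{p_{\de}}$, combined with the adjoint estimate of Theorem~\ref{FEMErrorEst} and an analogous $\om(h)$-type projection bound for $\overline{p_{\de}}$ itself. Young's inequality then produces an $\om(h)^2$ term plus another absorbable copy of $\|\overline{g_{\de}} - \overline{g_{\de,h}}\|^2_{L^2(\Om;\R^n)}$.

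The main obstacle is the careful handling of $I_2$: the definition of $\om$ in \eqref{smoothSeminormFullSpace} is tailored to elements of $\P_{Z_{\text{ad}}}(X_0(\Om_{\de};\R^n))$, so one does not a priori have an $\om(h)$-type bound on $\|\overline{p_{\de}} - \Pi_0 \overline{p_{\de}}\|_{L^2(\Om;\R^n)}$ or $\|\overline{p_{\de,h}} - \Pi_0 \overline{p_{\de,h}}\|_{L^2(\Om;\R^n)}$. This is resolved either by appealing to the sharper estimate of Lemma~\ref{smoothnessapproxseminorm}, which holds for arbitrary elements of $X_0(\Om_{\de};\R^n)$ in the fractional-type setting, or by broadening the definition of $\om$ abstractly to include a Gelfand-width factor on such adjoint quantities. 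After assembling all contributions, a final choice of sufficiently small Young parameter absorbs the $\|\overline{g_{\de}} - \overline{g_{\de,h}}\|^2_{L^2(\Om;\R^n)}$ terms on the right-hand side into the left, producing \eqref{errorOfControlsEst}.
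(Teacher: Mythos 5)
Your overall skeleton (test \eqref{OptCondEq} with $\ga_z = \overline{g_{\de,h}}$, test \eqref{OptCondDEq} with $\ga_h = \Pi_0\overline{g_{\de}}$, add to obtain $\la \|\overline{g_{\de}} - \overline{g_{\de,h}}\|^2 \le I_1 + I_2$) is the same as the paper's. The gap is in your handling of $I_1$.

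You split $I_1 = \langle \overline{p_{\de}} - \widehat{p_{\de}}, \overline{g_{\de,h}} - \overline{g_{\de}}\rangle + \langle \widehat{p_{\de}} - \overline{p_{\de,h}}, \overline{g_{\de,h}} - \overline{g_{\de}}\rangle$ and claim the first term is non-positive because $B_{\de}(\overline{u_{\de}} - \widehat{u_{\de}}, \overline{p_{\de}} - \widehat{p_{\de}})$ equals $\|\overline{u_{\de}} - \widehat{u_{\de}}\|^2_{L^2}$. This is false. From \eqref{discretizeEq3B}, the right-hand side of the defining equation for $\widehat{p_{\de}}$ is $\overline{u_{\de,h}}$, not $\widehat{u_{\de}}$; subtracting it from the equation for $\overline{p_{\de}}$ gives $B_{\de}(v, \overline{p_{\de}} - \widehat{p_{\de}}) = \langle v, \overline{u_{\de}} - \overline{u_{\de,h}}\rangle$, and setting $v = \overline{u_{\de}} - \widehat{u_{\de}}$ yields
\begin{equation*}
B_{\de}(\overline{u_{\de}} - \widehat{u_{\de}}, \overline{p_{\de}} - \widehat{p_{\de}}) = \langle \overline{u_{\de}} - \widehat{u_{\de}}, \overline{u_{\de}} - \overline{u_{\de,h}}\rangle,
\end{equation*}
which is a cross term with no definite sign since $\widehat{u_{\de}} \ne \overline{u_{\de,h}}$ in general. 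This is exactly why the paper does not use the state/adjoint intermediaries $\widehat{u_{\de}}, \widehat{p_{\de}}$ here; instead it introduces three \emph{discrete} auxiliary functions $U_{\de,h}$, $q_{\de,h}$, $r_{\de,h}$ (Galerkin projections of $\overline{u_{\de}}$ and $\overline{p_{\de}}$, plus the discrete adjoint driven by $U_{\de,h}$) and splits $I_1$ into three pieces; the piece $I_{1,3} = \langle r_{\de,h} - \overline{p_{\de,h}}, \overline{g_{\de,h}} - \overline{g_{\de}}\rangle$ is then shown to equal $-\|U_{\de,h} - \overline{u_{\de,h}}\|^2_{L^2} \le 0$, because both of the equations used to produce the two expressions for the bilinear form live in the same discrete space. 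Your two-term split could be repaired by expanding $\overline{u_{\de}} - \overline{u_{\de,h}} = (\overline{u_{\de}} - \widehat{u_{\de}}) + (\widehat{u_{\de}} - \overline{u_{\de,h}})$ and controlling the genuine cross term via C\'ea and Young's inequality, but as written the claim of non-positivity is wrong.

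Your simplification of $I_2$ using the $L^2$-orthogonality $\langle\la\overline{g_{\de,h}},\,\Pi_0\overline{g_{\de}}-\overline{g_{\de}}\rangle = 0$ and the reduction to $\langle \overline{p_{\de,h}} - \Pi_0\overline{p_{\de,h}},\,\Pi_0\overline{g_{\de}}-\overline{g_{\de}}\rangle$ is a genuinely different and cleaner decomposition than the paper's five-term split. The caveat you yourself flag is real: \eqref{smoothSeminormFullSpace} is formulated for $w \in \P_{Z_{\text{ad}}}X_0(\Om_{\de};\R^n)$, while your estimate requires an $\om(h)$-type bound on $\|\overline{p_{\de,h}} - \Pi_0\overline{p_{\de,h}}\|_{L^2}$ for a general element of $X_{\de,h}$. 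This is defensible under Lemma~\ref{smoothnessapproxseminorm} (or by reading $\om$ as the Gelfand-width quantity the paper alludes to), and the paper itself applies \eqref{smoothSeminormFullSpace} to quantities such as $\overline{p_{\de,h}} - r_{\de,h}$ that are not obviously in the range of $\P_{Z_{\text{ad}}}$, so you are no worse off than the source; but the main unfixed error remains the sign claim in $I_1$.
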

\begin{proof} We follow \emph{a grosso modo} the argument used to prove \cite[Theorem 4.7]{d2019priori}. We let $q_{\de, h} \in X_{\de, h}$ be the Galerkin approximation to $\overline{p_{\de}}$, i.e., the solution of
\begin{equation}\label{controlErrorp=2Ineq5}
    B_{\de}(v_{\de, h}, q_{\de, h}) \ = \ \lang \overline{u_{\de}}, v_{\de, h}\rang \ \quad \ \fa v_{\de, h} \in X_{\de, h}.
\end{equation}
Similarly, $U_{\de, h} \in X_{\de,h}$ is the Galerkin approximation to $\overline{u_{\de}}$:
\begin{equation}\label{controlErrorp=2Ineq10}
    B_{\de}(U_{\de, h}, v_{\de, h}) \ = \ \lang \overline{g_{\de}}, v_{\de, h}\rang \ \quad \ \fa v_{\de, h} \in X_{\de, h}.
\end{equation}
Finally, $r_{\de, h} \in X_{\de, h}$ solves
\begin{equation}\label{controlErrorp=2Ineq7}
    B_{\de}(v_{\de, h}, r_{\de, h}) \ = \ \lang U_{\de, h}, v_{\de, h}\rang \ \quad \ \fa v_{\de, h} \in X_{\de, h}.
\end{equation}
Set $\ga_z := \overline{g_{\de,h}}$ in \eqref{OptCondEq} and $\ga_h := \Pi_0 \overline{g}$ in \eqref{OptCondDEq}. Adding the ensuing inequalities we obtain
\begin{equation}\label{controlErrorp=2Ineq1}
    \la \|\overline{g_{\de}} - \overline{g_{\de, h}}\|^2_{L^2(\Om; \R^n)} \ \leq \ I_1 + I_2,
\end{equation}
where $I_1 := \lang \overline{p_{\de}} - \overline{p_{\de, h}}, \overline{g_{\de, h}} - \overline{g_{\de}}\rang$ and $I_2 := \lang \overline{p_{\de, h}} + \la \overline{g_{\de, h}}, \Pi_0\overline{g_{\de}} - \overline{g_{\de}}\rang$. Now, we write $I_1$ as
\begin{equation}\label{controlErrorp=2Ineq4A}
    \begin{split}I_1 \ &= \ \lang \overline{p_{\de}} - q_{\de, h}, \overline{g_{\de, h}} - \overline{g_{\de}}\rang + 
    \lang q_{\de, h} - r_{\de, h}, \overline{g_{\de, h}} - \overline{g_{\de}}\rang + \lang r_{\de, h} - \overline{p_{\de, h}}, \overline{g_{\de, h}} - \overline{g_{\de}}\rang \\\
    &=: \ I_{1, 1} + I_{1, 2} + I_{1, 3}.
    \end{split}
\end{equation}
We now claim that $I_{1, 3} \leq 0$. Indeed, recall that $\overline{p_{\de, h}}$ denotes the optimal adjoint state for the discrete problem, and hence it satisfies
\begin{equation}\label{controlErrorp=2Ineq8}
B_{\de}(v_{\de, h}, \overline{p_{\de, h}}) \ = \ \lang \overline{u_{\de, h}}, v_{\de, h}\rang \ \quad \ \fa v_{\de, h} \in X_{\de, h}.
\end{equation}
 Subtracting \eqref{controlErrorp=2Ineq7} and \eqref{controlErrorp=2Ineq8} yields
\begin{equation}\label{controlErrorp=2Ineq9}
    B_{\de}(v_{\de, h}, r_{\de, h} - \overline{p_{\de, h}}) \ = \ \lang U_{\de, h} - \overline{u_{\de, h}}, v_{\de, h}\rang \ \quad \ \fa v_{\de, h} \in X_{\de, h},
\end{equation}
while subtracting \eqref{controlErrorp=2Ineq10} and \eqref{stateSystemNLD} shows us that $U_{\de, h} - \overline{u_{\de, h}}$ solves
\begin{equation}\label{controlErrorp=2Ineq12}
    B_{\de}(U_{\de, h} - \overline{u_{\de, h}}, v_{\de, h}) \ = \ \lang \overline{g_{\de}} - \overline{g_{\de, h}}, v_{\de, h}\rang \ \quad \ \fa v_{\de, h} \in X_{\de, h}.
\end{equation}
Set  $v_{\de, h} := r_{\de, h} - \overline{p_{\de, h}}$ in \eqref{controlErrorp=2Ineq12} to obtain
\begin{equation}\label{controlErrorp=2Ineq13}
    B_{\de}(U_{\de, h} - \overline{u_{\de, h}}, r_{\de, h} - \overline{p_{\de, h}}) \ = \ \lang \overline{g_{\de}} - \overline{g_{\de, h}}, r_{\de, h} - \overline{p_{\de, h}}\rang.
\end{equation}
Similarly, in \eqref{controlErrorp=2Ineq9}, we set $v_{\de, h} := U_{\de, h} - \overline{u_{\de, h}}$ and obtain
\begin{equation}\label{controlErrorp=2Ineq14}
B_{\de}(U_{\de, h} - \overline{u_{\de, h}}, r_{\de, h} - \overline{p_{\de, h}}) \ = \ \|U_{\de, h} - \overline{u_{\de, h}}\|^2_{L^2(\Om; \R^n)}.
\end{equation}
Since the left-hand sides of \eqref{controlErrorp=2Ineq13} and \eqref{controlErrorp=2Ineq14} are identical, it follows that $I_{1, 3} \leq 0$. By using Cauchy-Schwarz and C\'ea's lemma repeatedly, we also obtain the following estimates for $I_{1, 1}$ and $I_{1, 2}$:
\begin{equation}\label{controlErrorp=2Ineq15}
  I_{1, 1} \ \lesssim \ \|\overline{g_{\de, h}} - \overline{g_{\de}}\|_{L^2(\Om; \R^n)}\inf_{v_{\de, h} \in X_{\de, h}}[\overline{p_{\de}} - v_{\de, h}]_{X(\Om_{\de}; \R^n)};
\end{equation}
\begin{equation}\label{controlErrorp=2Ineq20}
    I_{1, 2} \ \lesssim \ \|\overline{g_{\de, h}} - \overline{g_{\de}}\|_{L^2(\Om; \R^n)}\inf_{v_{\de, h} \in X_{\de, h}}[\overline{u_{\de}} - v_{\de, h}]_{X(\Om_{\de}; \R^n)}.
\end{equation}
Combine \eqref{controlErrorp=2Ineq15} and \eqref{controlErrorp=2Ineq20}, along with the fact that $I_{1, 3} \leq 0$, to see that
\begin{equation}\label{controlErrorp=2Ineq21}
I_1 \ \lesssim \ \|\overline{g_{\de, h}} - \overline{g_{\de}}\|_{L^2(\Om; \R^n)}\left(\inf_{v_{\de, h} \in X_{\de, h}}[\overline{u_{\de}} - v_{\de, h}]_{X(\Om_{\de}; \R^n)} + \inf_{v_{\de, h} \in X_{\de, h}}[\overline{p_{\de}} - v_{\de, h}]_{X(\Om_{\de}; \R^n)} \right).
\end{equation}
Finally, by two applications of Young's Inequality,
\begin{eqnarray}\label{controlErrorp=2Ineq22}
\begin{aligned}
    I_1 \ \leq \ \frac{\la}{3}\|\overline{g_{\de, h}} - \overline{g_{\de}}\|^2_{L^2(\Om; \R^n)} &+ C\left(\inf_{v_{\de, h} \in X_{\de, h}}[\overline{u_{\de}} - v_{\de, h}]_{X(\Om_{\de}; \R^n)} \right)^2 \\&+ C\left(\inf_{v_{\de, h} \in X_{\de, h}}[\overline{p_{\de}} - v_{\de, h}]_{X(\Om_{\de}; \R^n)}\right)^2,
    \end{aligned}
\end{eqnarray}
for some constant $C > 0$ independent of $\de$ and $h$. Let us now turn our attention to estimating $I_2 = \lang p_h + \la \overline{g_{\de, h}}, \Pi_0 \overline{g_{\de}} - \overline{g_{\de}}\rang$. We write it as
\begin{eqnarray}\label{controlErrorp=2Ineq25}
    \begin{aligned}
        &\lang \overline{p_{\de, h}} + \la \overline{g_{\de, h}}, \Pi_0 \overline{g_{\de}} - \overline{g_{\de}}\rang \ = \ 
        \lang \overline{p_{\de}} + \la \overline{g_{\de}}, \Pi_0 \overline{g_{\de}} - \overline{g_{\de}}\rang + \la\lang \overline{g_{\de, h}} - \overline{g_{\de}}, \Pi_0 \overline{g_{\de}} - \overline{g_{\de}}\rang + \\
        &\lang \overline{p_{\de, h}} - r_{\de, h}, \Pi_0 \overline{g_{\de}} - \overline{g_{\de}}\rang + \lang r_{\de, h} - q_{\de, h}, \Pi_0 \overline{g_{\de}} - \overline{g_{\de}}\rang + \lang q_{\de, h} - \overline{p_{\de}}, \Pi_0 \overline{g_{\de}} - \overline{g_{\de}}\rang \ =: \ \\
        &I_{2, 1} + I_{2, 2} + I_{2, 3} + I_{2, 4} + I_{2, 5},
    \end{aligned}
\end{eqnarray}
and in turn look to control each $I_{2, k}$, $k=1, \ldots,5$. Starting with $I_{2, 1}$, we write it as
\begin{equation}\label{controlErrorp=2Ineq25A}
\lang \overline{p_{\de}} + \la \overline{g_{\de}}, \Pi_0 \overline{g_{\de}} - \overline{g_{\de}}\rang \ = \ \lang \overline{p_{\de}} + \la \overline{g_{\de}} - \Pi_0(\overline{p_{\de}} + \la \overline{g_{\de}}), \Pi_0 \overline{g_{\de}} - \overline{g_{\de}}\rang.
\end{equation}
Since $\overline{g_{\de}} \in \mathbb{P}_{Z_{\text{ad}}}X(\Om; \R^n)$ and $\overline{p_\de}|_{\Om} \in X(\Om;\R^n)$, we use \eqref{smoothSeminormFullSpace} and Cauchy-Schwarz to obtain
\begin{equation}\label{controlErrorp=2Ineq25B}
I_{2, 1} \ \lesssim \ \om(h)^2.
\end{equation}
As for $I_{2, 2}$, we again utilize Cauchy-Schwarz and \eqref{smoothSeminormFullSpace}:
\begin{multline}\label{controlErrorp=2Ineq27}
I_{2, 2} \ = \ \la \lang \overline{g_{\de, h}} - \overline{g_{\de}}, \Pi_0 \overline{g_{\de}} - \overline{g_{\de}}\rang \ \leq \ \la \|\overline{g_{\de, h}} - \overline{g_{\de}}\|_{L^2(\Om; \R^n)}\|\Pi_0 \overline{g_{\de}} - \overline{g_{\de}}\|_{L^2(\Om; \R^n)} \ \leq \\  \la\om(h)\|\overline{g_{\de, h}} - \overline{g_{\de}}\|_{L^2(\Om; \R^n)}\leq \ \frac{\la}{3}\|\overline{g_{\de, h}} - \overline{g_{\de}}\|^2_{L^2(\Om; \R^n)} + \om(h)^2,
\end{multline}
for some constant $C > 0$.

To handle $I_{2, 3}$ we subtract \eqref{controlErrorp=2Ineq7} from \eqref{controlErrorp=2Ineq8}, set $v_{\de, h} := \overline{p_{\de, h}} - r_{\de, h}$ in the result, and obtain
\begin{equation}\label{controlErrorp=2Ineq28}
    [\overline{p_{\de, h}} - r_{\de, h}]_{X(\Om_{\de}; \R^n)} \ \leq \ C\|\overline{u_{\de, h}} - U_{\de, h}\|_{L^2(\Om; \R^n)}.
\end{equation} 
Applying \eqref{smoothSeminormFullSpace} with $w := \overline{p_{\de, h}} - r_{\de, h}$, and combining the result with \eqref{controlErrorp=2Ineq28} gives
\begin{equation}\label{controlErrorp=2Ineq31E}
     \|\overline{p_{\de, h}} - r_{\de, h} - \Pi_0(\overline{p_{\de, h}} - r_{\de, h})\|_{L^2(\Om; \R^n)} \ \leq \ \om(h)\|\overline{u_{\de, h}} - U_{\de, h}\|_{L^2(\Om; \R^n)}
\end{equation} 
\begin{equation}\label{controlErrorp=2Ineq32}
    I_{2, 3} \ \leq \ \om(h)\|\overline{u_{\de, h}} - U_{\de, h}\|_{L^2(\Om; \R^n)},
\end{equation}
Use Young's Inequality and C\'ea's Lemma on \eqref{controlErrorp=2Ineq32} to obtain
\begin{equation}
    I_{2, 3} \ \lesssim \ \om(h)^2 + \left(\inf_{v_{\de, h} \in X_{\de, h}}[\overline{u_{\de}} - v_{\de, h}]_{X(\Om_{\de}; \R^n)}\right)^2.
\end{equation}
To control $I_{2, 4}$, we use Cauchy-Schwarz and \eqref{smoothSeminormFullSpace}:
\begin{equation}\label{controlErrorp=2Ineq33}
    I_{2, 4} \ \leq \ \|r_{\de, h} - q_{\de, h}\|_{L^2(\Om; \R^n)}\|\Pi_0 \overline{g_{\de}} - \overline{g_{\de}}\|_{L^2(\Om; \R^n)} \ \leq \ \om(h)\|r_{\de, h} - q_{\de, h}\|_{L^2(\Om; \R^n)}.
\end{equation}
Then by a standard C\'ea's lemma argument,
\begin{equation}\label{controlErrorp=2Ineq37}
    I_{2, 4} \ \leq \ \om(h)\inf_{v_{\de, h} \in X_{\de, h}}[\overline{u_{\de}} - v_{\de, h}]_{X(\Om_{\de}; \R^n)}.
\end{equation}
Finally, for $I_{2, 5}$ we use \eqref{smoothSeminormFullSpace} and C\'ea's lemma again to obtain
\begin{equation}\label{controlErrorp=2Ineq40}
    I_{2, 5} \ \leq \ \om(h)\inf_{v_{\de, h} \in X_{\de, h}}[\overline{p_{\de}} - v_{\de, h}]_{X(\Om_{\de}; \R^n)}.
\end{equation}
After using Young's Inequality, and combining the estimates for $I_1$ and $I_2$, we conclude
\begin{equation}\label{controlErrorp=2Ineq40A}
    \frac{\la}{3}\|\overline{g_{\de}} - \overline{g_{\de, h}}\|^2_{L^2(\Om; \R^n)} \ \lesssim \ \om(h)^2 + \left(\inf_{v_{\de, h} \in X_{\de, h}}[\overline{u_{\de}} - v_{\de, h}]_{X(\Om_{\de}; \R^n)} \right)^2 + \left(\inf_{v_{\de, h} \in X_{\de, h}}[\overline{p_{\de}} - v_{\de, h}]_{X(\Om_{\de}; \R^n)}\right)^2,
\end{equation}
from which \eqref{errorOfControlsEst} immediately follows.
\end{proof}

We combine and summarize  the state and adjoint error estimates as follows.

\begin{corollary}[Error estimates]\label{fullNormSolnConvp=2}
In the setting of Theorems \ref{FEMErrorEst} and \ref{controlerrorestbondp=2},
\begin{multline}\label{seminormConvergencep=2NoHatUFull} 
   \|\overline{u_{\de}} - \overline{u_{\de, h}}\|_{X(\Om_{\de}; \R^n)} \ \lesssim \ \om(h) + \inf_{v_{\de, h} \in X_{\de, h}}\|\widehat{u_{\de}} - v_{\de, h}\|_{X(\Om_{\de}; \R^n)}\\
    + 
   \inf_{v_{\de, h} \in X_{\de, h}}[\overline{u_{\de}} - v_{\de, h}]_{X(\Om_{\de}; \R^n)} + \inf_{v_{\de, h} \in X_{\de, h}}[\overline{p_{\de}} - v_{\de, h}]_{X(\Om_{\de}; \R^n)};
\end{multline}
\begin{multline}\label{seminormConvergencep=2NoHatPFull} 
     \|\overline{p_{\de}} - \overline{p_{\de, h}}\|_{X(\Om_{\de}; \R^n)} \ \lesssim \ \om(h) + \inf_{v_{\de, h} \in X_{\de, h}}\|\widehat{p_{\de}} - v_{\de, h}\|_{X(\Om_{\de}; \R^n)} + \inf_{v_{\de, h} \in X_{\de, h}}\|\widehat{u_{\de}} - v_{\de, h}\|_{X(\Om_{\de}; \R^n)}\\
      +
     \inf_{v_{\de, h} \in X_{\de, h}}[\overline{u_{\de}} - v_{\de, h}]_{X(\Om_{\de}; \R^n)} + \inf_{v_{\de, h} \in X_{\de, h}}[\overline{p_{\de}} - v_{\de, h}]_{X(\Om_{\de}; \R^n)}.
\end{multline}
\end{corollary}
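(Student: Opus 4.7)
The corollary is essentially a bookkeeping step: it combines the two estimates already established in Theorem \ref{FEMErrorEst} and Theorem \ref{controlerrorestbondp=2}. My plan is to extract the control error bound from Theorem \ref{controlerrorestbondp=2} and feed it into the state/adjoint error bounds from Theorem \ref{FEMErrorEst}.

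First, I would take square roots in the estimate \eqref{errorOfControlsEst} of Theorem \ref{controlerrorestbondp=2}. Using the elementary inequality $\sqrt{a^2+b^2+c^2} \leq a+b+c$ for nonnegative $a,b,c$, this gives
\[
\|\overline{g_{\de}} - \overline{g_{\de,h}}\|_{L^2(\Om;\R^n)} \ \lesssim \ \om(h) + \inf_{v_{\de,h}\in X_{\de,h}}[\overline{u_{\de}} - v_{\de,h}]_{X(\Om_{\de};\R^n)} + \inf_{v_{\de,h}\in X_{\de,h}}[\overline{p_{\de}} - v_{\de,h}]_{X(\Om_{\de};\R^n)}.
\]

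Next, I would insert this bound directly into \eqref{FEMErrorEstEq1} of Theorem \ref{FEMErrorEst}, which gave
\[
\|\overline{u_{\de}} - \overline{u_{\de,h}}\|_{X(\Om_{\de};\R^n)} \ \lesssim \ \inf_{v_{\de,h}\in X_{\de,h}}\|\widehat{u_{\de}} - v_{\de,h}\|_{X(\Om_{\de};\R^n)} + \|\overline{g_{\de}} - \overline{g_{\de,h}}\|_{L^2(\Om;\R^n)}.
\]
Collecting terms yields \eqref{seminormConvergencep=2NoHatUFull}. An identical substitution, this time into \eqref{FEMErrorEstEq2}, produces \eqref{seminormConvergencep=2NoHatPFull}; the only cosmetic difference is the additional best-approximation term $\inf \|\widehat{p_\de} - v_{\de,h}\|_X$ already present in \eqref{FEMErrorEstEq2}.

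There is no essential obstacle here: the bounds are quasi-linear in the individual ingredients, the constants are independent of $\de$ and $h$ by virtue of the $\de$-independent Poincaré inequality \eqref{sharperPoincareIneq}, and no new functional analytic input is required. The only thing to be a bit careful about is to make sure all of the inequalities are indeed stated with hidden constants independent of $\de$ and $h$ (so that $\lesssim$ is used consistently), which is already the case in Theorems \ref{FEMErrorEst} and \ref{controlerrorestbondp=2}. Consequently, both displayed inequalities of the corollary follow by a direct substitution of the control estimate into the state and adjoint estimates.
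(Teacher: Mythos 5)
Your proof is correct and follows exactly the route the paper intends: the corollary is stated without a separate proof precisely because it is the direct substitution of the control estimate from Theorem~\ref{controlerrorestbondp=2} (after taking square roots via $\sqrt{a^2+b^2+c^2}\le a+b+c$) into the state and adjoint bounds of Theorem~\ref{FEMErrorEst}. Your remark about the hidden constants being uniform in $\de$ and $h$ is the right thing to verify, and it holds as you note.
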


Furthermore, if the conditions of Lemma \ref{smoothnessapproxseminorm} are satisfied, then $\om(h) \sim h^s$.


\subsection{Error analysis for local problems}\label{refineMeshL}

The analogue of \eqref{OptCondEq} for the local problem is
\begin{eqnarray}\label{OPTCondL}
\begin{aligned}
& \lang \overline{p} + \la \overline{g}, \ga - \overline{g}\rang \ \geq \ 0, \ \quad \fa \ga \in Z_{\text{ad}} \\
  &  \overline{p} \ = \ S^* \overline{u},  \\
    &\overline{u} \ = \ S\overline{g},
\end{aligned}
\end{eqnarray}
where by $S : Z_{\text{ad}} \rightarrow H^1_0(\Om; \R^n)$ we denote the solution operator to problem \ref{locProb}. In a similar manner, the analogue to \eqref{OptCondDEq} for the local discrete problem is
\begin{eqnarray}\label{OPTCondLocalD}
\begin{aligned}
& \lang \overline{p_h} + \la \overline{g_h}, \ga_h - \overline{g_h}\rang \ \geq \ 0, \ \quad \fa \ga_h \in Z_{\text{ad}} \cap Z_h \\
&\overline{p_h} \ = \ S_h^* \overline{u_h} \ = \ S_h \overline{u_h} \\
  &  \overline{u_h} \ = \ S_h\overline{g_h},
\end{aligned}
\end{eqnarray}
where $S_h: Z_h \rightarrow X_h$ denotes the discrete solution operator. The analogue of \eqref{nlCtsCtrlToAdjoint} for the local discrete problem is
\begin{equation}\label{localDiscProjForm}
\overline{g_h}(x) \ = \ -\frac{1}{\la}\P_{Z_{\text{ad}}}(\Pi_0\overline{p_h}(x)). 
\end{equation}
Define the intermediary functions $\widehat{u}, \widehat{p} \in H^1_0(\Om_{\de}; \R^n)$ such that
\begin{equation}\label{discretizeEq3AL}
    B_0(\widehat{u}, v) \ = \ \lang \overline{g_h}, v\rang \ \quad \fa v \in H^1_0(\Om; \R^n); 
\end{equation}
\begin{equation}\label{discretizeEq3BL}
    B_0(v, \widehat{p}) \ = \ \lang v, \widehat{u_h}\rang \ \quad \ \fa v\in H^1_0(\Om; \R^n).
\end{equation}
Again, these functions exist and are uniquely defined thanks to Lax-Milgram. Much like for the non-local problem, we have state and control error estimates as $h \rightarrow 0^+$, and the proofs are virtually identical to those already presented. However, for the local problem, since $\overline{g} \in H^1(\Om; \R^n)$ we may employ the estimate
\begin{equation}\label{alternateSmoothness}
\|\Pi_0 w - w\|_{L^2(\Om; \R^n)} \ \leq \ h[w]_{H^1(\Om; \R^n)}
\end{equation}
in place of \eqref{smoothSeminormFullSpace}. We may also prove, in the same manner as in  Section \ref{refineMeshNL}, error estimates for the discrete state, adjoint, and control. In particular, suppose $(\overline{u}, \overline{g})$ denotes the solution to Problem \ref{lCtsProb}, while $(\overline{u_h}, \overline{g_h})$ denotes the solution to the discrete Problem \ref{lDiscProb}. Assume also that $\overline{p}$ denotes the solution to the adjoint problem \eqref{OPTCondLocalD}, while $\overline{p_h}$ solves the discrete adjoint problem \ref{OPTCondLocalD}. If we further denote $\overline{g}$ as the optimal control for Problem \ref{lCtsProb}, and $\overline{g_h}$ as the discrete optimal control for Problem \ref{lDiscProb}, then we have the estimates
\begin{equation}\label{FEMErrorEstXEq1L}
    \|\overline{u} - \overline{u_h}\|_{H^1(\Om; \R^n)} \ \lesssim \ \inf_{v_h \in X_h}[\widehat{u} - v_h]_{H^1(\Om; \R^n)} + \|\overline{g} - \overline{g_h}\|_{L^2(\Om; \R^n)};
\end{equation}
\begin{equation}\label{FEMErrorEstXEq2L}
    \|\overline{p} - \overline{p_h}\|_{H^1(\Om; \R^n)} \ \lesssim \ \inf_{v_h \in X_h}[\widehat{p} - v_h]_{H^1(\Om; \R^n)} +
\inf_{v_h \in X_h}[\widehat{u} - v_h]_{H^1(\Om; \R^n)} + \|\overline{g} - \overline{g_h}\|_{L^2(\Om; \R^n)}.
\end{equation}
\begin{equation}\label{errorOfControlsEstL}
    \|\overline{g} - \overline{g_h}\|_{L^2(\Om; \R^n)} \ \lesssim \ h + \inf_{v_h \in X_h}[\overline{p} - v_h]_{H^1(\Om; \R^n)} + \inf_{v_h \in X_h}[\overline{u} - v_h]_{H^1(\Om; \R^n)}.
\end{equation}
It then follows that $\overline{u_h} \rightarrow \overline{u}$ and $\overline{p_h} \rightarrow \overline{p}$ in $H^1(\Om; \R^n)$ as $h \rightarrow 0^+$.


\section{Asymptotic Compatibility}\label{compatibility}

In \cite{tian2014asymptotically} the concept of asymptotically compatible schemes for parameter-dependent linear problems was introduced. The goal of asymptotic compatibility is to guarantee that we reach the same local, continuous solution regardless of whether we send $\de$ and $h$ to $0$ separately (in either order) or simultaneously. This broad idea has been implemented extensively in several problems, see  \cite{tian2016asymptotically, buczkowski2022sensitivity, leng2020asymptotically, leng2021asymptotically}. 
Our main goal in this section is to extend this notion to nonlocal optimal control problems, and to show that our ensuing numerical schemes are indeed asymptotically compatible.
We first provide a definition of asymptotic compatibility of a scheme to the optimal control problems that slightly extends \cite[Definition 2.8]{tian2014asymptotically}.  

\begin{definition}[Asymptotic compatibility]\label{asymptoticCompatibilityDef}
We say that the family of solutions $\{(\overline{u_{\de, h}}, \overline{g_{\de, h}} )\}_{h>0,\de > 0}$ to Problem \ref{nlDiscProb} is \textbf{asymptotically compatible} in $\de, h > 0$ if for any sequences $\{\de_k\}^{\infty}_{k = 1}, \{h_k\}^{\infty}_{k = 1}$ with $\de_k, h_k \rightarrow 0$, we have that $\overline{u_{\de_k, h_k}} \rightarrow \overline{u}$ strongly in $L^2(\Om; \R^n)$ and $\overline{g_{\de_k, h_k}} \rightharpoonup \overline{g}$ weakly in $L^2(\Om; \R^n)$. Here $(\overline{u}, \overline{g}) \in H^1_0(\Om; \R^n) \times Z_{\text{ad}}$ denotes
the optimal solution for Problem \eqref{lCtsProb}.
\end{definition}

\tikzcdset{row sep/normal=3cm}
\tikzcdset{column sep/normal=5cm}

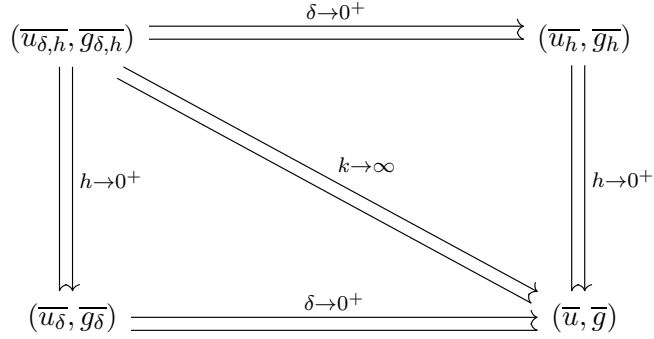
\begin{figure}[ht]
  \begin{tikzcd}
    \arrow[d, "h \rightarrow 0^+"] \arrow[d, rightharpoonup, shift right=1ex]
    \arrow[dr,"k \rightarrow \infty"]
    \arrow[dr,rightharpoonup,shift right =1ex]
        (\overline{u_{\de, h}},\overline{g_{\de, h}})
    \arrow[r, "\de \rightarrow 0^+", shift left=1ex] \arrow[r, rightharpoonup]     
        & (\overline{u_h}, \overline{g_h}) 
        \arrow[d, "h \rightarrow 0^+"] \arrow[d, shift right = 1ex, rightharpoonup] 
    \\
        (\overline{u_{\de}}, \overline{g_{\de}}) 
    \arrow[r, "\de \rightarrow 0^+",] \arrow[r, shift right=1ex, rightharpoonup] 
        & (\overline{u}, \overline{g})
\end{tikzcd}
\caption{Commutative diagram associated with Definition \ref{asymptoticCompatibilityDef}.}
\label{fig:AsympCompat}
\end{figure}

The idea behind asymptotic compatibility can be summarized by saying that the diagram in Figure~\ref{fig:AsympCompat} commutes. The asymptotic compatibility theory for linear problems developed in \cite{tian2014asymptotically} hinges on several structural properties for the operators at hand. Since they will be also useful in our setting, we quickly verify them here as well.

For each $\de > 0$, define $A_{\de}: X_0(\Om_{\de}; \R^n) \rightarrow X_0(\Om_{\de}; \R^n)^*$ as the bounded, invertible, linear operator such that
\begin{equation}\label{AdeBFormBond}
    \lang A_{\de} u, v\rang_{X_0(\Om_{\de}; \R^n)^*, X_0(\Om_{\de}; \R^n)} \ = \ B_{\de}(u, v) \ \quad \fa u, v \in X_0(\Om_{\de}; \R^n).
\end{equation}
Similarly define $A_0: H^1_0(\Om; \R^n) \rightarrow H^{-1}(\Om; \R^n)$ as the bounded, invertible, linear operator such that
\begin{equation}\label{A0BFormBond}
    \lang A_0 u, v\rang_{H^{-1}(\Om; \R^n), H^1_0(\Om; \R^n)} \ = \ B_0(u, v) \ \quad \fa u, v \in H^1_0(\Om; \R^n).
\end{equation}

\begin{proposition}[Asymptotic structural properties]\label{compatibilityCheckBond}
The following hold:
\begin{enumerate}[label=\textbf{AC\arabic*}]
    \item\label{AC1} The family of spaces $\{X_{\de, h}\}_{\de > 0, h > 0}$ is asymptotically dense in $H^1_0(\Om; \R^n)$. That is, given a $v \in H^1_0(\Om; \R^n)$, and some sequences $h_k, \de_k \rightarrow 0$, we can find a sequence $v_k \in X_{\de_k, h_k}$ such that $v_k \rightarrow v$ strongly in $H^1(\Om; \R^n)$ as $k \rightarrow \infty$.
    \item\label{AC2} For any sequences $\{\de_k\}^{\infty}_{k = 1}, \{h_k\}^{\infty}_{k = 1}$ with $\de_k, h_k \rightarrow 0$ and the family of solutions $\{(\overline{u_{\de_k, h_k}}, \overline{g_{\de, h_k}} )\}$ to Problem \ref{nlDiscProb}, there exists a $C > 0$ so that $\|\overline{u_{\de_k, h_k}}\|_{X(\Om_{\de_k}; \R^n)} \leq C$ uniformly in $k \in \N^+$.
    \item\label{AC3} For each $u \in C^{\infty}_0(\Om; \R^n)$ and $\de \geq 0$, we have that $A_{\de}u \in L^2(\Om; \R^n)$.
    \item\label{AC4} For any $u \in C^{\infty}_0(\Om; \R^n)$, we have that $\lim_{\de \rightarrow 0^+}\|A_{\de}u - A_0u\|_{L^2(\Om; \R^n)} \ = \ 0$.
\end{enumerate}
\end{proposition}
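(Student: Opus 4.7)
The plan is to verify each of the four structural properties AC1--AC4 in turn, as they are largely independent of one another and each only uses tools already developed in earlier sections.

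For AC1, observe that any element of $X_{\de,h}$ is continuous across $\Om_\de$ and vanishes on $\Om_\de \setminus \Om$, so by continuity it vanishes on $\partial\Om$ and its restriction to $\Om$ belongs to the standard $H^1_0$-conforming $\mathcal{P}_1$ finite element space on a mesh of $\Om$. Given $v \in H^1_0(\Om;\R^n)$ (extended by zero to $\Om_\de$), I would take $v_k$ to be the Scott--Zhang quasi-interpolant of $v$ into $X_{\de_k,h_k}$, for which the standard estimate $\|v - v_k\|_{H^1(\Om;\R^n)} \to 0$ as $h_k \to 0^+$ holds independently of $\de_k$. Since the construction does not depend on $\de_k$, this yields the required approximation along \emph{any} sequences $h_k, \de_k \to 0^+$.

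For AC2, I would exploit that $0 \in Z_{\text{ad}}$, so optimality gives
\[
I(\overline{u_{\de_k,h_k}}, \overline{g_{\de_k,h_k}}) \leq I(0,0) < \infty,
\]
which bounds $\overline{g_{\de_k,h_k}}$ uniformly in $L^2(\Om;\R^n)$ via the positivity of $\Ga$ and hypothesis \eqref{Gqcoercive} on $F$. Then, exactly as in the derivation of \eqref{stateControlEq2}--\eqref{stateControlEq4} in Lemma \ref{convergenceStateControl}, the discrete state equation \eqref{stateSystemNLD} combined with the nonlocal Poincar\'e inequality of Proposition \ref{sharperPoincare} (whose constant is independent of $\de$ for $\de \in (0,\de_0]$) yields a uniform-in-$k$ bound on $[\overline{u_{\de_k,h_k}}]_{X(\Om_{\de_k};\R^n)}$, hence on the full norm. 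For AC3, a direct pointwise bound suffices when $\de > 0$: for $u \in C^\infty_0(\Om;\R^n)$ the smoothness gives $|Du(x,y)|/|x-y| \leq \|\grad u\|_{L^\infty}$, so
\[
|A_\de u(x)| \leq \|h\|_{L^\infty} \|\grad u\|_{L^\infty} \int_{\R^n} k_\de(|\xi|)\,d\xi = \|h\|_{L^\infty}\|\grad u\|_{L^\infty},
\]
uniformly in $x \in \Om$ (and vanishing outside a $\de$-neighborhood of $\supp u$). For $\de=0$, integration by parts in $B_0$ identifies $A_0 u$ with the classical linear elasticity operator applied to $u$, which is bounded on $\Om$ under the standing regularity assumption on $h$; in either case $A_\de u \in L^\infty(\Om;\R^n) \subset L^2(\Om;\R^n)$.

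For AC4, I would first establish the \emph{pointwise} convergence $A_\de u(x) \to A_0 u(x)$ for each $x \in \Om$. This is the pointwise analogue of Lemma \ref{L-to-NL-for-smooth}: expand $u(y) = u(x) + \grad u(x)(y-x) + \tfrac{1}{2}\Hess u(x)[y-x,y-x] + O(|y-x|^3)$ in the integrand of $\Lin_\de u(x)$, use radial symmetry of $k_\de$ to kill the first-order contribution, and compute the second-order contribution via the moments of $k_\de$, which by Assumption \ref{kernelAssump} concentrate so as to produce precisely the constants appearing in $B_0$. Combined with the uniform pointwise bound from AC3 and the boundedness of $\Om$, the Dominated Convergence Theorem then promotes pointwise convergence to convergence in $L^2(\Om;\R^n)$. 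The main obstacle is this pointwise convergence step: one must carefully track the second-order Taylor remainder and invoke the kernel normalization \eqref{kernelNormalization} plus the monotonicity hypothesis on $k_\de(r)r^{-2}$ to control the moments uniformly in $\de$ and conclude that the leading remainder is $O(\de^2 \|D^3 u\|_{L^\infty})$, which vanishes as $\de\to 0^+$ uniformly in $x$.
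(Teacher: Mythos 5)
Your arguments for \ref{AC1} and \ref{AC2} are sound, and they are close in spirit to the paper's (very terse) justification; the paper simply appeals to the well-known density of conforming $\mathcal{P}_1$ finite elements for \ref{AC1}, and for \ref{AC2} it uses a standard a priori bound together with the boundedness of $Z_{\text{ad}}$ — note that the detour through $I(\cdot,\cdot)\leq I(0,0)$ is unnecessary and, as written, not quite tight, since $\Ga\in L^1(\Om)$ with $\Ga>0$ does not imply $\Ga$ is bounded below, so $\int\Ga|g|^2\,dx$ does not control $\|g\|_{L^2}^2$; the clean route is simply that $Z_{\text{ad}}$ as defined in \eqref{linearAdSet} is already bounded in $L^2(\Om;\R^n)$, after which the estimates \eqref{stateControlEq2}--\eqref{stateControlEq4} combined with the $\de$-uniform constant in Proposition \ref{sharperPoincare} go through exactly as you describe.

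The genuine gap is in \ref{AC3}, and it propagates to \ref{AC4}. Your pointwise bound $|A_\de u(x)|\leq\|h\|_{L^\infty}\|\grad u\|_{L^\infty}\int k_\de$ uses the displayed formula for $\Lin_\de u$ from the introduction, whose integrand is $H k_\de\,\tfrac{Du}{|x-y|}\tfrac{y-x}{|x-y|}$ (degree $0$ in $|x-y|$). But $A_\de$ in \eqref{AdeBFormBond} is defined so that $\langle A_\de u,v\rangle=B_\de(u,v)$, and if you integrate $B_\de(u,v)=\tfrac12\iint H k_\de\,\tfrac{Du\,Dv}{|x-y|^2}\,dx\,dy$ by parts in $v$ you get
\[
A_\de u(x)=\int_{\R^n} H(x,y)\,k_\de(|x-y|)\,\frac{Du(x,y)}{|x-y|^2}\,\frac{x-y}{|x-y|}\,dy,
\]
with an \emph{extra} factor of $|x-y|^{-1}$ compared to the display in the introduction (which is dimensionally inconsistent with a second-order limit operator and appears to be a typo). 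With the corrected kernel, the crude estimate $|Du(x,y)|/|x-y|\leq\|\grad u\|_{L^\infty}$ only yields $|A_\de u(x)|\lesssim\|\grad u\|_{L^\infty}\int k_\de(|\xi|)|\xi|^{-1}\,d\xi$, and this integral is infinite for an admissible class of kernels — e.g.\ the fractional-type kernels \eqref{kernelOfFracType} with $s\geq\tfrac12$ that the paper explicitly allows. To rescue \ref{AC3} one must already exploit the cancellation you only invoke for \ref{AC4}: subtract the degree-$1$ (odd-in-$\xi$) part of $Du$, which integrates to zero against the radial $k_\de$, leaving a remainder dominated by $\|D^2u\|_{L^\infty}k_\de$, which is integrable. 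Even then there is a further subtlety: the material coefficient $H(x,y)=\tfrac12(h(x)+h(y))$ is not even in $\xi=y-x$ when $h$ is merely bounded, so the $h(y)$-part of the cancellation does not come for free; this requires either additional integrability of $k_\de/|\xi|$ or additional regularity on $h$, and the same obstruction undermines the Dominated Convergence argument in your \ref{AC4}, since the purported $\de$-uniform $L^\infty$ domination is exactly the bound from \ref{AC3}. The paper sidesteps all of this by delegating \ref{AC3} and \ref{AC4} to \cite{tian2014asymptotically} and \cite[Prop.\ 3.1]{mengesha2014bond}; your attempt to give a self-contained Taylor-expansion proof is a reasonable and more instructive route, but as written the key domination step fails for the operator actually defined by $B_\de$.
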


\begin{proof}
The fact that finite element spaces of piecewise linear functions are asymptotically dense in $H^1_0(\Om;\R^n)$ is well-known, thus verifying \ref{AC1}. For any $k \in \N$, the bound stated in \ref{AC2} follows from a standard a priori estimate and the fact that $Z_{\text{ad}}$ is bounded in $L^2(\Om;\R^n)$.

Finally, the remaining properties are addressed in \cite{tian2014asymptotically}; see also \cite[Proposition 3.1]{mengesha2014bond}. 
\end{proof}

The structural conditions given above guarantee the asymptotic compatibility for linear problems. Our extension regarding the asymptotic compatibility of our schemes in the setting of optimal control problems is the content of the next result.

\begin{theorem}[Asymptotic compatibility]\label{asymptoticCompatibilityBondLinearp=2}
The solution to Problem 
\ref{nlDiscProb} 
is asymptotically compatible in $\de, h > 0$, in the sense of Definition \ref{asymptoticCompatibilityDef}.
\end{theorem}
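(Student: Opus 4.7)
The plan is to follow the familiar variational/direct-method template used in Theorem \ref{convOfSolutions}, now with a diagonal sequence $(\de_k, h_k) \to (0,0)$, and to lean on the structural conditions \ref{AC1}--\ref{AC4} (verified in Proposition \ref{compatibilityCheckBond}) in order to pass to the limit in the discrete state equation. Uniqueness of the minimizer of the limit problem will then upgrade sub-sequential convergence to convergence of the full sequence.

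\textbf{Step 1 (Compactness).} Since $\{\overline{g_{\de_k,h_k}}\} \subset Z_{\text{ad}}$, it is $L^2$-bounded and, up to a subsequence, $\overline{g_{\de_k,h_k}} \rightharpoonup \overline{g}^\ast$ in $L^2(\Om;\R^n)$ with $\overline{g}^\ast \in Z_{\text{ad}}$. By \ref{AC2}, $\sup_k [\overline{u_{\de_k,h_k}}]_{X(\Om_{\de_k};\R^n)} < \infty$, and extending by zero to a fixed $\Om_1$, the compactness result of \cite{mengesha2012nonlocal,mengesha2015variational} used in Lemma \ref{convergenceStateControl} yields, up to a further subsequence, $\overline{u_{\de_k,h_k}} \to \overline{u}^\ast$ strongly in $L^2(\Om;\R^n)$ with $\overline{u}^\ast \in H^1_0(\Om;\R^n)$.

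\textbf{Step 2 (Limit pair is admissible).} I will show $(\overline{u}^\ast,\overline{g}^\ast) \in \mathcal{A}^{\loc}$. Fix $v \in C^\infty_0(\Om;\R^n)$ and use \ref{AC1} to produce $v_k \in X_{\de_k,h_k}$ with $v_k \to v$ in $H^1_0(\Om;\R^n)$. The discrete state equation \eqref{stateSystemNLD} reads
\begin{equation*}
B_{\de_k}(\overline{u_{\de_k,h_k}}, v_k) \;=\; \lang \overline{g_{\de_k,h_k}}, v_k\rang.
\end{equation*}
The right-hand side converges to $\lang \overline{g}^\ast, v\rang$ by weak-strong convergence in $L^2$. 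For the left-hand side, I split $B_{\de_k}(\overline{u_{\de_k,h_k}}, v_k) = B_{\de_k}(\overline{u_{\de_k,h_k}}, v) + B_{\de_k}(\overline{u_{\de_k,h_k}}, v_k - v)$. The second term is controlled by $[\overline{u_{\de_k,h_k}}]_X [v_k-v]_X$, which vanishes thanks to the $\de$-independent embedding $H^1_0 \hookrightarrow X_0(\Om_{\de};\R^n)$ of Lemma \ref{ctsXW}. For the first term, symmetry of $B_{\de_k}$ and \ref{AC3} give
\begin{equation*}
B_{\de_k}(\overline{u_{\de_k,h_k}}, v) \;=\; \int_\Om A_{\de_k} v \cdot \overline{u_{\de_k,h_k}}\,dx,
\end{equation*}
and since $A_{\de_k} v \to A_0 v$ in $L^2(\Om;\R^n)$ by \ref{AC4} while $\overline{u_{\de_k,h_k}} \to \overline{u}^\ast$ strongly in $L^2(\Om;\R^n)$, this integral converges to $\int_\Om A_0 v \cdot \overline{u}^\ast\,dx = B_0(\overline{u}^\ast, v)$. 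Density of $C^\infty_0(\Om;\R^n)$ in $H^1_0(\Om;\R^n)$ then yields $B_0(\overline{u}^\ast, v) = \lang \overline{g}^\ast, v\rang$ for all admissible $v$, i.e.\ $(\overline{u}^\ast,\overline{g}^\ast) \in \mathcal{A}^{\loc}$.

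\textbf{Step 3 (Optimality via a recovery sequence).} Let $(v,f) \in \mathcal{A}^{\loc}$ be arbitrary. Set $f_{h_k} := \Pi_0 f \in Z_h \cap Z_{\text{ad}}$ (using that the box constraints in $Z_{\text{ad}}$ are preserved by piecewise-constant $L^2$-projection); then $f_{h_k} \to f$ strongly in $L^2(\Om;\R^n)$. Let $v_{\de_k,h_k} \in X_{\de_k,h_k}$ solve \eqref{stateSystemNLD} with control $f_{h_k}$. Exactly the argument from Step 2, combined with \ref{AC2} applied to this auxiliary sequence, shows $v_{\de_k,h_k} \to v$ strongly in $L^2(\Om;\R^n)$ (this is the linear asymptotic compatibility of \cite{tian2014asymptotically}). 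By optimality of $(\overline{u_{\de_k,h_k}},\overline{g_{\de_k,h_k}})$ in $\mathcal{A}^{\de_k}_{h_k}$,
\begin{equation*}
I(v_{\de_k,h_k},f_{h_k}) \;\geq\; I(\overline{u_{\de_k,h_k}},\overline{g_{\de_k,h_k}}).
\end{equation*}
Passing to the limit, the left side tends to $I(v,f)$ by the growth condition \eqref{Gqcoercive} and Lebesgue's dominated convergence on the $F$-term together with the strong $L^2$-convergence of $f_{h_k}$. On the right, Fatou's lemma applied along a pointwise-a.e.\ converging subsequence of $\overline{u_{\de_k,h_k}}$ (using the convexity/continuity of $F(x,\cdot)$) together with weak $L^2$ lower semicontinuity of $g \mapsto \int_\Om \Ga |g|^2$ yields $\liminf_k I(\overline{u_{\de_k,h_k}},\overline{g_{\de_k,h_k}}) \geq I(\overline{u}^\ast,\overline{g}^\ast)$. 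Hence $I(v,f) \geq I(\overline{u}^\ast,\overline{g}^\ast)$ for every $(v,f) \in \mathcal{A}^{\loc}$, so $(\overline{u}^\ast,\overline{g}^\ast)$ is optimal for Problem \ref{lCtsProb}.

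\textbf{Step 4 (Uniqueness and the whole sequence).} By Corollary \ref{fullExisteceAndUniqueness} the minimizer of Problem \ref{lCtsProb} is unique, so $(\overline{u}^\ast,\overline{g}^\ast) = (\overline{u},\overline{g})$. Since every subsequence of $\{(\overline{u_{\de_k,h_k}},\overline{g_{\de_k,h_k}})\}$ admits a further subsequence with the asserted limit, the full sequence converges as required. The main technical nuisance is \emph{Step 2}, specifically the passage to the limit in the bilinear form with a discrete test function: it is there that the compatibility conditions \ref{AC1}, \ref{AC3}, \ref{AC4} are used simultaneously, and the uniform $H^1 \hookrightarrow X$ embedding of Lemma \ref{ctsXW} is needed to absorb the discrete test-function error $v_k - v$ independently of $\de_k$.
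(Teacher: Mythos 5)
Your proposal is correct, but Step 3 departs from the paper's proof in a genuine and interesting way. Steps 1, 2, and 4 essentially match the paper: the compactness extraction, the passage to the limit in the discrete state equation via \ref{AC1}--\ref{AC4} together with the $\de$-uniform embedding of Lemma \ref{ctsXW}, and the sub-sub-sequence/uniqueness upgrade are all what the paper does (the paper uses the Lagrange interpolant and $W^{1,\infty}$ convergence for the test functions, you use the $H^1$ density from \ref{AC1} directly, but either suffices to drive $[v_k - v]_{X(\Om_{\de_k};\R^n)}\to 0$). Where you diverge is in establishing that the limit pair is actually optimal. The paper does this \emph{through the optimality system}: it extracts a strong $L^2$ limit of the discrete adjoints $\overline{p_{\de_k,h_k}}$, passes to the limit in the discrete adjoint equation and in the projection formula $\overline{g_{\de_k,h_k}} = -\tfrac{1}{\la}\P_{Z_{\text{ad}}}(\Pi_0\overline{p_{\de_k,h_k}})$, and concludes that the triple $(u_*,g_*,p_*)$ satisfies the local optimality conditions \eqref{OPTCondL}. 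This route relies on the Section \ref{FEMAnalysis} simplifications $\Ga\equiv 1$ and $F(x,v)=\tfrac12|v|^2$ (so that the projection formula and the self-adjoint structure are available) but delivers a bonus: strong $L^2$ convergence of the adjoints. Your Step 3 instead mirrors the variational argument of Theorem \ref{convOfSolutions}: you build a recovery sequence $(v_{\de_k,h_k},\Pi_0 f)\in\Alg^{\de_k}_{h_k}$, invoke the linear asymptotic compatibility of \cite{tian2014asymptotically} to get $v_{\de_k,h_k}\to v$, and then compare objective values using Fatou plus weak lower semicontinuity of the $\Ga$-weighted control cost. This works for the full class of convex $F$ with quadratic growth and general weights $\Ga$, not only the quadratic tracking functional, so it is arguably more general; on the other hand it yields no information on the adjoints. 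One point worth making explicit, which both you and the paper leave implicit: using $\Pi_0 f$ (or $\Pi_0\overline{g_{\de}}$ in the paper's control error analysis) as an admissible discrete control tacitly requires that the piecewise-constant $L^2$-projection leave $Z_{\text{ad}}$ invariant; this is automatic when the bounds $a,b$ in \eqref{linearAdSet} are constants (as assumed in Lemma \ref{regularityControlFrac}), but not for arbitrary $a,b\in L^\infty$. Since the paper operates under the same assumption this is a shared hypothesis rather than a gap in your argument.
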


\begin{proof}
In this proof we denote $\{(\overline{u_k}, \overline{g_k})\}^{\infty}_{k = 1} := (\overline{u_{\de_k, h_k}}, \overline{g_{\de_k, h_k}})^{\infty}_{k = 1}$, which is the sequence of pairs solving Problem \ref{nlDiscProb}. We also let $\{\overline{p_k}\}^{\infty}_{k = 1} := \{\overline{p_{\de_k, h_k}}\}^{\infty}_{k = 1}$ denote the sequence of solutions to the adjoint problem included in \eqref{OptCondDEq}. We consider an arbitrary, non-relabeled sub-sequence of the triples $\{(\overline{u_k}, \overline{g_k}, \overline{p_k})\}^{\infty}_{k = 1}$, and show that it has a further sub-sequence which always converges to the same limit point. Moreover, this limit solves \eqref{OPTCondL} and, since this uniquely characterizes the solution to Problem \ref{lCtsProb}, asymptotic compatibility will follow.

Since $\{\overline{g_k}\}^{\infty}_{k = 1} \subset Z_{\text{ad}}$, this sequence is bounded in $L^2(\Om; \R^n)$, and there exists a sub-sequence and a function $g_*$ so that $\overline{g_k} \rightharpoonup g_*$ weakly in $L^2(\Om; \R^n)$. Meanwhile,
due to item \ref{AC2} of Proposition \ref{compatibilityCheckBond}, the sequence $\{\overline{u_k}\}_{k=1}^\infty$ is uniformly bounded, and upon taking a further, non-relabeled, sub-sequence, there exists a limit point $u_* \in H^1_0(\Om; \R^n)$ so that $\overline{u_k} \rightarrow u_*$ strongly in $L^2(\Om; \R^n)$. Since $\{(\overline{u_k}, \overline{g_k})\}^{\infty}_{k = 1}$ are pairs satisfying Problem \ref{nlDiscProb}, we have for all $v_k \in X_{\de_k, h_k}$ that
\begin{equation}\label{asymptoticCompatibilityBondLinearp=2Eq1}
    B_{\de_k}(\overline{u_k}, v_k) \ = \ \lang \overline{g_k}, v_k\rang.
\end{equation}
Let $\vphi \in C^{\infty}_0(\Om; \R^n)$ be arbitrary, and denote by $I_k$ the Lagrange nodal interpolant with respect to the mesh of size $h_k$. If $w_k := I_k\vphi \in X_{\de_k, h_k}$, then $w_k \rightarrow \vphi$ in $W^{1, \infty}(\Om; \R^n)$ as $k \rightarrow \infty$. This convergence is sufficiently strong to ensure
\begin{equation}\label{asymptoticCompatibilityBondLinearp=2Eq2}
\lim_{k \rightarrow \infty}\lang \overline{g_k}, w_k\rang \ = \ \lang g_*, \vphi\rang.
\end{equation}
Now, utilizing the definition \eqref{AdeBFormBond}, we write
\begin{equation}\label{asymptoticCompatibilityBondLinearp=2Eq3}
  \begin{split}  B_{\de_k}(\overline{u_k}, w_k) \ &= \ \lang A_{\de_k}\overline{u_k}, w_k\rang_{X_0(\Om_{\de_k}; \R^n)^*, X_0(\Om_{\de_k}; \R^n)} \ \\
    &=  \lang A_{\de_k}\vphi, \overline{u_k}\rang_{X_0(\Om_{\de_k}; \R^n)^*, X_0(\Om_{\de_k}; \R^n)} \\
    &+ \lang A_{\de_k}(w_k - \vphi), \overline{u_k}\rang_{X_0(\Om_{\de_k}; \R^n)^*, X_0(\Om_{\de_k}; \R^n)} \ \\
     &=:\ I_k + II_k.
     \end{split}
\end{equation}
Due to item \ref{AC3} of Proposition \ref{compatibilityCheckBond}, necessarily $A_{\de_k}\vphi \in L^2(\Om; \R^n)$, and by item \ref{AC4}, we have that $A_{\de_k}\vphi \rightarrow A_0\vphi$ strongly in $L^2(\Om; \R^n)$. Due to this and $\overline{u_k} \rightarrow u_*$ strongly in $L^2(\Om; \R^n)$, the term $I$ behaves as follows:
\begin{equation}\label{asymptoticCompatibilityBondLinearp=2Eq4}
    \lim_{k \rightarrow \infty}\lang A_{\de_k}\vphi, \overline{u_k}\rang_{X_0(\Om_{\de_k}; \R^n)^*, X_0(\Om_{\de_k}; \R^n)} \ = \ \lang A_0\vphi, u_*\rang_{H^{-1}(\Om; \R^n), H^1_0(\Om; \R^n)}.
\end{equation}
As for $II_k$, we may use the definition \eqref{A0BFormBond} and that $\overline{u_k}$ is the solution to \eqref{stateSystem}, along with Hölder to deduce
\begin{equation}\label{asymptoticCompatibilityBondLinearp=2Eq5}
    II_k \ = \ B_{\de_k}(\overline{u_k}, w_k - \vphi) \ \lesssim \ \|\overline{u_k}\|_{X(\Om_{\de_k}; \R^n)}\|w_k - \vphi\|_{X(\Om_{\de_k}; \R^n)}.
\end{equation}
Due to item \ref{AC2} the first factor is uniformly bounded in $k$, whereas the second factor is controlled up to a constant (uniform in $k$) by $\|w_k - \vphi\|_{H^1(\Om; \R^n)}$, due to Lemma \ref{ctsXW}. This factor is further bounded from above by $\|w_k - \vphi\|_{W^{1, \infty}(\Om; \R^n)}$, and then the convergence of $w_k \rightarrow \vphi$ in $W^{1, \infty}(\Om; \R^n)$ tells us that $II_k \rightarrow 0$ as $k\to \infty$. The result is that
\begin{equation}\label{asymptoticCompatibilityBondLinearp=2Eq6}
    B_0(u_*, \vphi) \ = \ \lang g_*, \vphi\rang
\end{equation}
for all $\vphi \in C^{\infty}_0(\Om; \R^n)$; by density, we may then extend \eqref{asymptoticCompatibilityBondLinearp=2Eq6} to all $\vphi \in H^1_0(\Om; \R^n)$. 
Repeating the analysis just used for the sequence of states $\{\overline{u_k}\}^{\infty}_{k = 1}$, we identify a $p_* \in H^1_0(\Om; \R^n)$ so that $\overline{p_k} \rightarrow p_*$ strongly in $L^2(\Om; \R^n)$, and
\begin{equation}\label{asymptoticCompatibilityBondLinearp=2Eq7}
    B_0(\vphi, p_*) \ = \ \lang u_*, \vphi\rang 
\end{equation}
for all $\vphi \in H^1_0(\Om; \R^n)$. Now, we link the states, controls and adjoints, beginning as follows: due to \eqref{refineMeshNL}, for each $k$ we have that
\begin{equation}\label{asymptoticCompatibilityBondLinearp=2Eq8}
    B_{\de_k}(\overline{p_k}, v_k) \ = \ \lang \overline{u_k}, v_k\rang
\end{equation}
for all $v_k \in X_{\de_k, h_k}$, and the identity
\begin{equation}\label{asymptoticCompatibilityBondLinearp=2Eq9}
    \overline{g_k}(x) \ = \ -\frac{1}{\la}\P_{Z_{\text{ad}}}(\Pi_0\overline{p_k}(x)).
\end{equation}
The next step is to show that $\Pi_0\overline{p_k} \rightarrow p_*$ strongly in $L^2(\Om; \R^n)$. By the Triangle Inequality and the stability of $\Pi_0$, we estimate
\begin{equation}\label{asymptoticCompatibilityBondLinearp=2Eq9A}
    \|\Pi_0 \overline{p_k} - p_*\|_{L^2(\Om; \R^n)} \ \leq \ \|\overline{p_k} - p_*\|_{L^2(\Om; \R^n)} + \|\Pi_0p_* - p_*\|_{L^2(\Om; \R^n)}.
\end{equation}
Since $\overline{p_k} \rightarrow p_*$ strongly in $L^2(\Om; \R^n)$, the first term in \eqref{asymptoticCompatibilityBondLinearp=2Eq9A} decays to $0$ as $k \rightarrow \infty$, while the second term vanishes due to \eqref{smoothSeminormFullSpace}.

Now, due to the convergence $\Pi_0\overline{p_k} \rightarrow p_*$ strongly in $L^2(\Om; \R^n)$ and the projection mapping being Lipschitz, we have that $-\frac{1}{\la}\P_{Z_{\text{ad}}}(\Pi_0\overline{p_k}(x)) \rightarrow -\frac{1}{\la}\P_{Z_{\text{ad}}}(p_*(x))$; this coupled with the weak convergence $\overline{g_k} \rightharpoonup g_*$ in $L^2(\Om; \R^n)$ lets us conclude
\begin{equation}\label{asymptoticCompatibilityBondLinearp=2Eq10}
    g_*(x) \ = \ -\frac{1}{\la}\P_{Z_{\text{ad}}}(p_*(x)).
\end{equation}

Since \eqref{asymptoticCompatibilityBondLinearp=2Eq6}, \eqref{asymptoticCompatibilityBondLinearp=2Eq7}, and \eqref{asymptoticCompatibilityBondLinearp=2Eq10} all hold, and solutions to the local continuous optimality conditions \eqref{OPTCondL} are necessarily unique, we have that $g_* = \overline{g}$; $p_* = \overline{p}$; and $u_* = \overline{u}$.
Finally, notice that this limit point $(\overline{u}, \overline{g}, \overline{p}) \in H^1_0(\Om; \R^n) \times Z_{\text{ad}} \times H^1_0(\Om; \R^n)$ is independent of the original sub-sequence chosen, which means the entire sequence $\{(\overline{u_k}, \overline{g_k}, \overline{p_k})\}^{\infty}_{k = 1}$ converges to $(\overline{u}, \overline{g}, \overline{p})$ in the $L^2(\Om; \R^n) \times L^2_{\textit{wk}}(\Om; \R^n) \times L^2(\Om; \R^n)$ topology, completing the proof.
\end{proof}


\section*{Acknowledgements}
TM is supported by NSF grants DMS-1910180 and DMS-2206252.
AJS and JMS have been supported by NSF grant DMS-2111228.


\bibliographystyle{plain} 
\bibliography{refs} 

\end{document}